\documentclass[a4paper,oneside,12pt,reqno]{amsart}
\usepackage[utf8]{inputenc}
\usepackage[T1]{fontenc}
\usepackage[english]{babel}
\usepackage[margin=3cm]{geometry}
\usepackage{amsmath}
\usepackage{amsthm}
\usepackage[theoremfont]{newpx}
\usepackage{enumitem}
\usepackage[colorlinks,allcolors=blue]{hyperref}
\usepackage[capitalize,noabbrev]{cleveref}
\usepackage{orcidlink}
\usepackage{tikz-cd}
\usepackage{todonotes}
\NewDocumentEnvironment{enumList}{o}{
    \IfValueTF{#1}{
        \begin{enumerate}[label=\textrm{(\alph*)},#1]
    }{
    \begin{enumerate}[label=\textrm{(\alph*)}]
    }
}{
    \end{enumerate}
}
\NewDocumentEnvironment{enumEquiv}{o}{
    \IfValueTF{#1}{
        \begin{enumerate}[label=\textrm{(\roman*)},#1]
    }{
        \begin{enumerate}[label=\textrm{(\roman*)}]
    }
}{
    \end{enumerate}
}
\NewDocumentEnvironment{enumItem}{o}{
    \IfValueTF{#1}{
        \begin{enumerate}[label=\textrm{(\roman*)},#1]
    }{
        \begin{enumerate}[label=\textrm{(\roman*)}]
    }
}{
    \end{enumerate}
}

\numberwithin{equation}{section}

\theoremstyle{plain}
\newtheorem{theorem}{Theorem}[section]
\newtheorem{proposition}[theorem]{Proposition}
\newtheorem{lemma}[theorem]{Lemma}
\newtheorem{corollary}[theorem]{Corollary}

\theoremstyle{definition}
\newtheorem{definition}[theorem]{Definition}
\newtheorem{example}[theorem]{Example}
\newtheorem{remark}[theorem]{Remark}

\AddToHook{env/proposition/begin}{\crefalias{theorem}{proposition}}
\AddToHook{env/lemma/begin}{\crefalias{theorem}{lemma}}
\AddToHook{env/corollary/begin}{\crefalias{theorem}{corollary}}
\AddToHook{env/definition/begin}{\crefalias{theorem}{definition}}
\AddToHook{env/example/begin}{\crefalias{theorem}{example}}
\AddToHook{env/remark/begin}{\crefalias{theorem}{remark}}
\AddToHook{env/question/begin}{\crefalias{theorem}{question}}

\crefname{equation}{}{}
\crefname{enumi}{}{}

\let\originalleft\left
\let\originalright\right
\renewcommand{\left}{\mathopen{}\mathclose\bgroup\originalleft}
\renewcommand{\right}{\aftergroup\egroup\originalright}

\newcommand{\bbN}{\mathbb{N}}

\newcommand{\bbR}{\mathbb{R}}

\newcommand{\calA}{\mathcal{A}}

\newcommand{\calL}{\mathcal{L}}

\newcommand{\calS}{\mathcal{S}}

\newcommand{\calZ}{\mathcal{Z}}

\newcommand{\rmL}{\mathrm{L}}

\newcommand{\rmd}{\mathrm{d}}

\newcommand{\rmr}{\mathrm{r}}

\newcommand{\bfzero}{\mathbf{0}}
\newcommand{\bfone}{\mathbf{1}}

\newcommand{\nat}{\bbN}

\newcommand{\real}{\bbR}

\renewcommand{\to}{\longrightarrow}
\renewcommand{\mapsto}{\longmapsto}

\newcommand{\abs}[1]{|{#1}|}
\newcommand{\Abs}[1]{\left|{#1}\right|}
\newcommand{\norm}[1]{\|{#1}\|}
\newcommand{\Norm}[1]{\left\|{#1}\right\|}

\newcommand{\dset}[2]{\{#1\;|\;#2\}}
\newcommand{\Dset}[2]{\left\{#1\;\middle|\;#2\right\}}
\newcommand{\set}[1]{\{#1\}}
\newcommand{\Set}[1]{\left\{{#1}\right\}}

\newcommand{\eps}{\varepsilon} 

\newcommand{\id}{\mathrm{id}} 

\DeclareMathOperator{\ba}{ba}

\let\hat\widehat

\title{The $T$-strong duals of $L^1(T)$ and $L^\infty(T)$}
\author[F.\ Boisen]{Florian Boisen
    \orcidlink{0009-0000-7632-6320}
}
\address[F.\ Boisen]{Faculty of Mathematics, Institute of Analysis, Dresden University of Technology, 01062 Dresden, Germany}
\email{florian.boisen@tu-dresden.de}
\author[A.\ Kalauch]{Anke Kalauch}
\address[A.\ Kalauch]{Faculty of Mathematics, Institute of Analysis, Dresden University of Technology, 01062 Dresden, Germany; and Research fellow at University of the Free State, Department of Mathematics and Applied Mathematics, Faculty of Natural and Agricultural Sciences, PO Box 339, Bloemfontein 9300, South Africa}
\email{anke.kalauch@tu-dresden.de}
\author[W.\ Kuo]{Wen-Chi Kuo
    \orcidlink{0000-0002-1678-8598}
}
\address[W.\ Kuo]{School of Mathematics, University of the Witwatersrand, Private Bag 3, PO WITS 2050, South Africa; and National Institute of Theoretical and Computational Sciences (NITheCS), Johannesburg, South Africa}
\email{wen.kuo@wits.ac.za}
\author[J.\ Stennder]{Janko Stennder
    \orcidlink{0000-0003-2413-759X}
}
\address[J.\ Stennder]{Faculty of Mathematics, Institute of Analysis, Dresden University of Technology, 01062 Dresden, Germany}
\email{janko.stennder@tu-dresden.de}
\author[B.\ A. Watson]{Bruce A. Watson
    \orcidlink{0000-0003-2403-1752}
}
\address[B.\ A.\ Watson]{School of Mathematics, University of the Witwatersrand, Private Bag 3, PO WITS 2050, South Africa; National Institute of Theoretical and Computational Sciences (NITheCS), Johannesburg, South Africa; and Centre of Excellence in Mathematical and Statistical Sciences (CoE-MaSS)}
\email{bruce.watson@wits.ac.za}

\begin{document}
    \begin{abstract}
        For a conditional expectation operator $T$ on a Dedekind complete Riesz space, we give representations of the $T$-strong duals of $L^1(T)$ and $L^\infty(T)$.
        The representation for the $T$-strong dual of $L^1(T)$ follows from the known result for $L^2(T)$.
        To describe the $T$-strong dual of $L^\infty(T)$, we introduce charges on components of weak order units and develop a corresponding integration theory.
    \end{abstract}

    \subjclass[2020]{46A40, 
        46E40, 
        46G10, 
        47B60, 
        47B65  
    }
    \keywords{Riesz space; conditional expectation operator; dual space; charge; finitely-additive measure}

    \maketitle
\section{Introduction} \label{sec:intro}

Given a complete $\sigma$-finite measure space $(\Omega,\calA,\lambda)$, the space $L^\infty(\Omega,\calA,\lambda)'$ of all bounded linear functionals on the space of (equivalence classes) of essentially bounded measurable linear functionals can be identified with the space of all bounded charges, i.e., finitely additive measures, on $(\Omega,\calA)$ that are absolutely continuous with respect to $\lambda$, cf. \cite{Toland-2020,Yosida-Hewitt-1952}.
In the present paper, we provide a similar representation for the $T$-strong dual of $L^\infty(T)$ in a Riesz space setting.

A \emph{conditional Riesz triple} is a triple $(E,e,T)$ such that
\begin{enumItem}
    \item $E$ is a Dedekind complete Riesz space,
    \item $e$ is a weak order unit in $E$, i.e., the band generated by $e$ coincides with $E$,
    \item $T \colon E \to E$ is a \emph{conditional expectation operator}, i.e., an order continuous positive linear projection such that $T(e) = e$ and the range $R(T)$ of $T$ is a Dedekind complete Riesz subspace of $E$, and
    \item $T$ is strictly positive, i.e., $T[E_+ \setminus \set{0}] \subseteq E_+ \setminus \set{0}$. 
\end{enumItem}
Conditional expectations on Riesz spaces were introduced in \cite{Kuo-Labuschagne-Watson-2005} and conditional Riesz triples in \cite{Azouzi-Ben-Amor-Cherif-Masmoudi-2024}.

In \cite{Kuo-Labuschagne-Watson-2005}, the conditional expectation $T$ is extended to its maximal domain, called the \emph{$T$-universal completion}, denoted by $L^1(T)$.
So far, $L^1(T)$ is defined constructively as a subspace of the universal completion of $E$.
Alternatively, in \cref{sec:T-universal-completion-Lp-spaces}, we give an axiomatic definition and show that it is unique up to Riesz isomorphism. 

For $p = 2$ and $p = \infty$, the spaces $L^p(T)$ were introduced in \cite{Labuschagne-Watson-2010} as
\begin{align*}
    L^2(T) & \coloneqq \Dset{f \in L^1(T)}{\abs{f}^2 \in L^1(T)}, \\
    L^\infty(T) & \coloneqq \Dset{f \in L^1(T)}{\exists g \in R(T)_+ : \abs{f} \leq g}.
\end{align*}
Using functional calculus, a definition for $L^p(T)$ for $p \in (1,\infty)$ is given in \cite{Azouzi-Trabelsi-2017} as
\[
    L^p(T) \coloneqq \Dset{f \in L^1(T)}{\abs{f}^p \in L^1(T)}.
\]
Approriate $R(T)$-valued norms $\norm{\cdot}_{T,p}$ on these spaces are defined similarly to norms on classical $L^p$-spaces, see \cref{sec:T-universal-completion-Lp-spaces} below.
In \cite{Kalauch-Kuo-Watson-2024b}, the notion of a \emph{$T$-strong dual space} of $L^2(T)$ was introduced.
Analogously, we consider the $T$-strong dual of $L^p(T)$ for $p \in [1,\infty]$, defined as
\[
    \hat{L^p(T)} \coloneqq \Dset{\varphi \in \rmL^\rmr(L^p(T),R(T))}{\begin{aligned}
        & \varphi \text{ is $R(T)$-homogeneous}, \\
        & \exists k \in R(T)_+ \,\forall f \in L^p(T) : \abs{\varphi(f)} \leq k \norm{f}_{T,p}
    \end{aligned}},
\]
where $\rmL^r(L^p(T),R(T))$ denotes the space of all regular linear operators from $L^p(T)$ to $R(T)$, i.e., all linear operators that can be expressed as a difference of two positive linear operators.
Note that, as $R(T)$ is Dedekind complete, $\rmL^r(L^p(T),R(T))$ coincides with the space of all order bounded linear operators, cf. \cite{Aliprantis-Burkinshaw-2006}.

In \cite{Kalauch-Kuo-Watson-2024b}, it was established that the $T$-strong dual of $L^2(T)$ is isometrically Riesz isomorphic to $L^2(T)$.
In the present paper, we give representations for the $T$-strong duals of $L^1(T)$ and $L^\infty(T)$.

As a preparation to describe the $T$-strong dual of $L^\infty(T)$, we introduce charges on components of Riesz spaces in \cref{sec:charges} and develop an integration theory with respect to these charges in \cref{sec:integration}. 
In \cref{sec:duals}, we use the representation of $\hat{L^2(T)}$ to conclude that $\hat{L^1(T)}$ is isometrically Riesz isomorphic to $L^\infty(T)$, see \cref{thm:dual-of-l1} below.
Moreover, using the integration theory established in \cref{sec:integration}, we show that $\hat{L^\infty(T)}$ is isometrically Riesz isomorphic to the space of $T$-absolutely continuous order bounded signed charges on the components of the weak order unit $e$, see \cref{thm:dual-of-linfty} below.
In \cref{sec:application}, we consider an illustrative example and link a classical method to obtain the dual of $L^\infty(T)$ with our new approach.

%
%
%
%

  \section{Preliminaries}

For basic terminology on Riesz spaces, we refer to \cite{Aliprantis-Burkinshaw-2006,Luxemburg-Zaanen-1971,Zaanen-1997}.
We list notation and results that will be used throughout this paper.
For a set $M$, we denote by $\id \coloneqq \id_M \colon M \to M$ the identity map on $M$.

%

Let $E$ be a Riesz space.
In order to distinguish between suprema taken in different Riesz spaces, we occasionally denote, for $A \subseteq E$, by $\sup_E A$ the supremum of the set $A$ in $E$.
As usual, we denote $\sup A = \sup_E A$ if no confusion arises.
For a projection band $B$ in $E$, the corresponding band projection is denoted by $P_B \colon E \to E$.
For $f \in E_+$, we denote by $A_f$, resp. $B_f$ the principal ideal, resp. principal band generated by $f$ in $E$.
For a projection band $B_f$, we abbreviate $P_f \coloneqq P_{B_f}$.
Recall that, if $E$ is Dedekind complete, then every band is a projection band, cf. \cite[Theorem 1.42]{Aliprantis-Burkinshaw-2006}.
We will use the following standard result on the lattice-structure of the space of all regular linear operators, cf. \cite[Theorem 1.18]{Aliprantis-Burkinshaw-2006}.

\begin{theorem}[Riesz--Kantorovich] \label{thm:riesz-kantorovich}
    Let $E$ and $F$ be Riesz spaces with $F$ Dedekind complete.
    Then $\rmL^\rmr(E,F)$ is a Dedekind complete Riesz space and, for all $T \in \rmL^\rmr(E,F)$ and $f \in E_+$, we have
    \begin{align*}
        T^+(f) & = \sup \Dset{T(g)}{g \in E, 0 \leq g \leq f}, \\
        \Abs{T}(f) & = \sup \Dset{\abs{T(g)}}{g \in E, -f \leq g \leq f}.
    \end{align*}
\end{theorem}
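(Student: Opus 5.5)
The plan is to follow the classical route: build $T^+$ pointwise on $E_+$ by the stated supremum, show it is additive and positively homogeneous there, extend it linearly to $E$, and then identify it as the least upper bound of $T$ and $0$. The formula for $\Abs{T}$ will come out at the end from $\Abs{T} = 2T^+ - T$.

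First, for $f \in E_+$ I set $S(f) \coloneqq \sup \Dset{T(g)}{0 \leq g \leq f}$. This supremum exists in $F$ because $T$ is regular: writing $T = T_1 - T_2$ with $T_1, T_2 \geq 0$, we get $T(g) \leq T_1(g) \leq T_1(f)$, and $F$ is Dedekind complete. Taking $g = 0$ gives $S(f) \geq 0$, and clearly $S(\lambda f) = \lambda S(f)$ for $\lambda \geq 0$.

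The key step, and the main obstacle, is additivity $S(f_1 + f_2) = S(f_1) + S(f_2)$. The inequality $\geq$ is immediate, since $0 \leq g_i \leq f_i$ gives $0 \leq g_1 + g_2 \leq f_1 + f_2$. For $\leq$, I will use the Riesz decomposition property: any $0 \leq g \leq f_1 + f_2$ splits as $g = g_1 + g_2$ with $0 \leq g_i \leq f_i$. Then $T(g) = T(g_1) + T(g_2) \leq S(f_1) + S(f_2)$.

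Next comes the extension. Additivity and positive homogeneity on $E_+$ let me extend $S$ to a linear operator on $E$ via $S(f) = S(f^+) - S(f^-)$; well-definedness is the standard Kantorovich argument. The operator $S$ is positive and satisfies $S \geq T$, because $T(f) \leq S(f)$ for $f \geq 0$. Hence every regular $T$ has a positive upper bound $S$ with $S \geq 0$ and $S \geq T$.

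Then I show $S$ is the least such bound. If $R \geq T$ and $R \geq 0$, then for $0 \leq g \leq f$ we have $T(g) \leq R(g) \leq R(f)$, so $S(f) \leq R(f)$. Therefore $S = T \vee 0 = T^+$ in the ordered vector space $\rmL^\rmr(E,F)$.

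With $T^+$ existing for every regular $T$, the space $\rmL^\rmr(E,F)$ is a Riesz space. For the formula for $\Abs{T}$, I compute $\Abs{T}(f) = 2T^+(f) - T(f) = \sup\Dset{T(2g - f)}{0 \leq g \leq f}$. Substituting $h = 2g - f$, which ranges over $-f \leq h \leq f$, gives $\sup\Dset{T(h)}{-f \leq h \leq f}$. This equals $\sup\abs{T(h)}$ over the same set, since the set of such $h$ is symmetric under $h \mapsto -h$.

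For Dedekind completeness, take a nonempty family $0 \leq T_\alpha \uparrow \leq R$. I may assume the family is directed, by passing to finite suprema, which exist in the Riesz space just constructed. I then define $S(f) = \sup_\alpha T_\alpha(f)$ for $f \in E_+$. Directedness gives additivity on $E_+$, after which $S$ extends linearly, and $S$ is the supremum of the family.
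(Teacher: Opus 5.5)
Your proposal is correct and is the classical argument from Aliprantis--Burkinshaw (Theorem 1.18), which the paper cites rather than proves. The Riesz decomposition property gives additivity of $f \mapsto \sup \Dset{T(g)}{0 \leq g \leq f}$ on $E_+$, the Kantorovich extension together with the minimality check identifies this map as $T^+$, the identity $\abs{T} = 2T^+ - T$ gives the modulus formula, and pointwise suprema of positive, upward-directed, bounded families give Dedekind completeness.

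The one loose spot is the final reduction, which should start from an arbitrary nonempty set bounded above rather than from a family already assumed increasing. Fix a member $T_0$ of the set and replace each member $T$ by $T \vee T_0 - T_0 \geq 0$. Then close under finite suprema to obtain the positive upward-directed family your pointwise construction needs.
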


We need the subsequent version of the Freudenthal spectral theorem, cf. \cite[Theorems 33.2 and 33.3]{Zaanen-1997}.
For a weak order unit $e \in E$, an element $f \in E$ is called an \emph{$e$-step function} if there exist $\alpha_1,\dots,\alpha_n \in \real$ and pairwise disjoint components $p_1,\dots,p_n \in E_+$ of $e$ such that $f = \sum_{i=1}^n \alpha_i p_i$.

\begin{theorem} \label{thm:freudenthal}
    Let $E$ be a Dedekind complete Riesz space and $e \in E_+ \setminus \set{0}$.
    \begin{enumList}
        \item \label{it:thm:freudenthal:uniform}
        For every $f \in (A_e)_+$, there exists an increasing sequence $(s_n)_{n \in \nat}$ of positive $e$-step functions that converges $e$-uniformly to $f$.
        
        \item \label{it:thm:freudenthal:order}
        For every $f \in (B_e)_+$, there exists an increasing sequence $(s_n)_{n \in \nat}$ of positive $e$-step functions that converges in order to $f$.
    \end{enumList} 
\end{theorem}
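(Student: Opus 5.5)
The plan is to prove part (a) directly by a dyadic (Freudenthal-type) construction inside the principal ideal $A_e$, and then to deduce part (b) from part (a) by truncation.

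For part (a), fix $f \in (A_e)_+$ and choose $\lambda > 0$ with $0 \leq f \leq \lambda e$. For $n \in \nat$ and $k = 1, \dots, \lambda 2^n$ (after enlarging $\lambda$ to an integer), consider the band projections
\[
    P_{n,k} \coloneqq P_{(f - k2^{-n}e)^+},
\]
which exist because $E$ is Dedekind complete. The elements $q_{n,k} \coloneqq P_{n,k} e$ are components of $e$, and they decrease in $k$. Put
\[
    s_n \coloneqq 2^{-n} \sum_{k \geq 1} q_{n,k}.
\]
Rewriting this telescopically as $\sum_k k2^{-n}(q_{n,k} - q_{n,k+1})$, with pairwise disjoint components $q_{n,k} - q_{n,k+1}$, shows that $s_n$ is a positive $e$-step function.

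The key inequality is
\[
    s_n \leq f \leq s_n + 2^{-n}e .
\]
I would verify it by the standard argument with band projections. On the band $B_{k} \coloneqq$ range of $P_{n,k} - P_{n,k+1}$, one has $f \geq k2^{-n}e$, because $(f - k2^{-n}e)^- \perp$ the band of $(f - k2^{-n}e)^+$. On the same band, $f \leq (k+1)2^{-n}e$, since $P_{n,k+1}$ annihilates $(f-(k+1)2^{-n}e)^+$ there. Summing these pieces over the partition of $B_e$ (with the band for $k = 0$ handled similarly) gives both inequalities. The $e$-uniform convergence follows, since $0 \leq f - s_n \leq 2^{-n}e$.

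Monotonicity needs checking. Using $q_{n+1,2k} = q_{n,k}$, we get
\[
    s_{n+1} = 2^{-n-1}\sum_j q_{n+1,j} \geq 2^{-n-1}\cdot 2\sum_k q_{n,k+1}\ \ldots
\]
The cleaner route is to compare pairs $j = 2k-1, 2k$: since $q_{n+1,2k-1} \geq q_{n+1,2k} = q_{n,k}$, it follows that $s_{n+1} \geq s_n$. This pairing bookkeeping, together with the band-wise verification of the sandwich inequality, is the main technical point of (a).

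For part (b), let $f \in (B_e)_+$ and put $f_m \coloneqq f \wedge me \in (A_e)_+$. Since $e$ is a weak order unit of $B_e$ and $E$ is Dedekind complete, $f_m \uparrow f$; this is the standard fact that $f \wedge me \uparrow f$ for $f$ in the band generated by $e$. Apply (a) to $f_m$ to obtain increasing step functions $s_{m,n}$ with $0 \leq f_m - s_{m,n} \leq 2^{-n}e$. Set
\[
    t_n \coloneqq s_{1,n} \vee \dots \vee s_{n,n}.
\]
A finite supremum of $e$-step functions is again an $e$-step function: refine to a common disjoint family of components, namely the atoms of the Boolean algebra generated by the finitely many components. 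Each $t_n$ is positive, and the sequence $(t_n)$ is increasing because both indices increase.

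Finally, $t_n \leq f_n \leq f$, and $t_n \geq s_{m,n} \geq f_m - 2^{-n}e$ for all $n \geq m$. Hence, for every $m$,
\[
    f_m \leq \sup_n t_n \leq f,
\]
so $\sup_n t_n = f$ and $t_n \uparrow f$ in order. The main obstacle in (b) is justifying $f \wedge me \uparrow f$ in $B_e$. This follows since $\sup_m (f \wedge me)$ exists by Dedekind completeness. Call it $g$; then $f - g \geq 0$ satisfies $(f-g)\wedge e = 0$, because otherwise $f\wedge me$ could be improved. So $f - g$ lies in the disjoint complement of $e$ and also in $B_e$, and therefore vanishes.
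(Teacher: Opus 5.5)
The paper does not prove this statement; it quotes it from Zaanen (Theorems 33.2 and 33.3) as Freudenthal's spectral theorem. Your proposal is a correct, self-contained proof along the classical Freudenthal lines.

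\begin{itemize}
\item For (a), you use the dyadic components $q_{n,k}=P_{(f-k2^{-n}e)^+}e$, the band-wise sandwich $s_n\le f\le s_n+2^{-n}e$, and the identity $q_{n+1,2k}=q_{n,k}$.
\item For (b), you truncate to $f_m=f\wedge me$ and take the diagonal suprema $t_n=s_{1,n}\vee\dots\vee s_{n,n}$.
\end{itemize}

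Compared with the citation, your argument makes visible that only projections onto principal bands and the Archimedean property are used. In (b) you do not even need $\sup_n t_n$ to exist in advance. Any upper bound $w$ of $(t_n)$ satisfies $w\ge f_m-2^{-n}e$ for all $n\ge m$, hence $w\ge f_m$ for every $m$, and so $w\ge f$.

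A few points should be tidied up.

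\begin{itemize}
\item Delete the abandoned display ending in $\ldots$ before the pairing argument. The pairing $q_{n+1,2k-1}+q_{n+1,2k}\ge 2q_{n,k}$ alone gives $s_{n+1}\ge s_n$.
\item Your wording for why $f\le(k+1)2^{-n}e$ on the range of $P_{n,k}-P_{n,k+1}$ is imprecise. The reason is that $P_{n,k}-P_{n,k+1}\le I-P_{n,k+1}$, and it is $I-P_{n,k+1}$, not $P_{n,k+1}$, that annihilates $(f-(k+1)2^{-n}e)^+$.
\item Your decomposition of the identity must also include $I-P_f$. On its range both $f$ and $s_n$ vanish, so the sandwich holds there trivially.
\item The phrase ``$f\wedge me$ could be improved'' needs to be made precise. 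Put $u=(f-g)\wedge e$. Then $f\wedge me+u\le f$, because $u\le f-g\le f-f\wedge me$. Also $f\wedge me+u\le (m+1)e$. Hence $f\wedge me+u\le f\wedge(m+1)e$. By induction $mu\le f\wedge me\le f$ for all $m$, and the Archimedean property gives $u=0$.
\end{itemize}
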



A Dedekind complete Riesz space $E$ is called \emph{universally complete} if every non-empty set of mutually disjoint elements in $E$ has a supremum.
Recall that a Riesz subspace $D$ of a Riesz space $E$ is called \emph{order dense} if, for all $f \in E_+ \setminus \set{0}$, there exists $g \in D_+ \setminus \set{0}$ such that $g \leq f$.
Every Archimedean Riesz space $E$ can be embedded as an order dense Riesz subspace into a (unique up to Riesz isomorphism) universally complete Riesz space $E^u$, called the \emph{universal completion} of $E$.
The space $E^u$ can be endowed with the structure of an $f$-algebra.
Moreover, if $e \in E$ is a weak order unit of $E$, then $e$ is a weak order unit of $E^u$ and the $f$-algebra structure on $E^u$ can be constructed such that $e$ is an algebraic unit of $E^u$.
For details on the construction of the universal completion and its $f$-algebra structure, we refer to \cite[Section 50]{Luxemburg-Zaanen-1971} or \cite[Chapter 7]{Aliprantis-Burkinshaw-2003}.
Two elements $f,g \in E^u$ are disjoint if and only if $fg = 0$, cf. \cite[page 228]{Huijsmans-dePagter-1982}.

We need the following extension result on Riesz isomorphisms, cf. \cite[Theorem 7.19]{Aliprantis-Burkinshaw-2003}.

\begin{lemma} \label{thm:extend-riesz-iso}
    Let $E_1,E_2$ be universally complete Riesz spaces, $D_1 \subseteq E_1$ and $D_2 \subseteq E_2$ order dense Riesz subspaces, and $\Phi \colon D_1 \to D_2$ a Riesz isomorphism.
    Then there exists a unique Riesz isomorphism from $E_1$ to $E_2$ that extends $\Phi$.
\end{lemma}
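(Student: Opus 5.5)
Uniqueness is the easy half and will be handled first. Suppose $\Psi_1,\Psi_2 \colon E_1 \to E_2$ are Riesz isomorphisms extending $\Phi$. Then $\Psi_2^{-1}\circ\Psi_1 \colon E_1 \to E_1$ is a Riesz isomorphism that is the identity on $D_1$. A Riesz isomorphism is order continuous, since it preserves all suprema and infima. Order density of $D_1$ in the Dedekind complete space $E_1$ gives, for every $f \in (E_1)_+$,
\[
f = \sup\Dset{g \in D_1}{0 \le g \le f}.
\]
Applying $\Psi_2^{-1}\circ\Psi_1$, which fixes each such $g$, shows it fixes $f$. Hence $\Psi_1 = \Psi_2$.

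For existence I plan to extend $\Phi$ in two stages: first to the ideal generated by $D_1$, then to all of $E_1$.

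\textbf{Stage 1: extension to the ideal $I_1$ generated by $D_1$.} For $f \in (E_1)_+$ with $f \le d$ for some $d \in (D_1)_+$, define
\[
\Psi(f) \coloneqq \sup_{E_2}\Dset{\Phi(g)}{g \in D_1,\ 0 \le g \le f}.
\]
This supremum exists in $E_2$ because the set is bounded above by $\Phi(d)$. The key fact will be that $\Phi$ preserves suprema taken in $E_1$ and $E_2$ whenever they exist in $D_1$ and $D_2$. This holds because order dense Riesz subspaces are regularly embedded, and $\Phi$ is a lattice isomorphism between the subspaces. From this one gets that $\Psi$ is additive on the positive cone: it suffices to approximate from below, using that $f_1+f_2$ is the supremum of the sums $g_1+g_2$, which is Riesz decomposition inside $D_1$ together with order density. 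One also gets that $\Psi$ preserves finite infima. Extending by $\Psi(f) = \Psi(f^+) - \Psi(f^-)$ then yields a Riesz homomorphism on $I_1$ that extends $\Phi$.

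Injectivity will come from order density: if $f > 0$, pick $g \in D_1$ with $0 < g \le f$, so that $\Psi(f) \ge \Phi(g) > 0$. The same construction applied to $\Phi^{-1}$ gives the inverse on the ideal $I_2$ generated by $D_2$. Thus $\Psi \colon I_1 \to I_2$ is a Riesz isomorphism, and it is order continuous.

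\textbf{Stage 2: extension to $E_1$.} Now $I_1$ is an order dense ideal of $E_1$, so every $f \in (E_1)_+$ satisfies $f = \sup\Dset{h \in I_1}{0 \le h \le f}$. The plan is to define $\Psi(f)$ as the supremum of the images $\Psi(h)$ over these $h$. The obstacle is that this upward directed set may be unbounded in $E_2$, and universal completeness only guarantees suprema of disjoint families.

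I expect to resolve this by a maximal disjoint system. Take a maximal disjoint family $(u_i)$ in $(I_1)_+$ and decompose $f = \sup_i P_{u_i} f$; this is a disjoint supremum, which exists in $E_1$ by universal completeness. Each piece $P_{u_i}f$ is then approximated by the truncations $P_{u_i}f \wedge n u_i$, which lie in $I_1$. Using the Freudenthal approximation (\cref{thm:freudenthal}) inside each band, one has to show that $\sup_n \Psi(P_{u_i} f \wedge n u_i)$ exists in $E_2$. This is the genuinely hard step: it requires that a universally complete space is laterally and "vertically" complete. In practice I would cite the standard fact that in a universally complete space with weak unit $u$, the elements $f \wedge nu$ increase to a supremum for each $f$. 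Equivalently, I would work through the $f$-algebra representation as $C^\infty(K)$, where suprema of upward directed families bounded pointwise on a dense set exist.

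Once $\Psi$ is defined on $E_1$, I will check additivity and lattice preservation by order continuity. The same construction applied to $\Phi^{-1}$ gives an inverse, and the composition is the identity on $D_1$, hence the identity everywhere by the uniqueness argument above.
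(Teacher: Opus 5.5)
The paper does not prove this lemma; it quotes it from \cite[Theorem 7.19]{Aliprantis-Burkinshaw-2003}, so your argument has to stand on its own. Your uniqueness argument and Stage 1 are sound in substance. One point needs repair: $\Psi(f_1+f_2)=\sup\Phi(g_1+g_2)$ does not follow from a Riesz decomposition inside $D_1$, since the pieces of such a decomposition lie only in $E_1$. It follows from cofinality. If $A\subseteq (D_1)_+$ is upward directed with $\sup_{E_1}A=f$, then every $g\in D_1$ with $0\le g\le f$ satisfies $g=\sup_{a\in A} g\wedge a$, so $\Psi(f)=\sup\Phi[A]$ by regularity of $D_1$ and $D_2$.

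The genuine gap is in Stage 2, exactly at the step you flag: the existence of $\sup_n \Psi(P_{u_i}f\wedge nu_i)$ in $E_2$. The fact you propose to cite does not give it. In $E_1$ it is trivial ($f\wedge nu\uparrow f$ for any weak unit $u$) and says nothing about images. In $E_2$ there is not yet an element playing the role of $f$. The Riesz isomorphism $\Psi\colon I_1\to I_2$ only tells you that $y_n\coloneqq\Psi(P_{u_i}f\wedge nu_i)$ is increasing with $y_n=y_{n+1}\wedge nv$, where $v\coloneqq\Psi(u_i)$. This does not force boundedness: in $E_2=\real$ with $v=1$, the sequence $y_n=n$ has exactly this form and no supremum. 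Universal completeness only provides suprema of order bounded sets and of disjoint families, and an increasing sequence not known to be bounded is neither. So the Archimedean information that the preimages are dominated by $f$ must be transported into $E_2$, and your proposal has no mechanism for this. The $C^\infty(K)$ alternative has the same hole: you would need the images to be bounded on a dense set, which again requires relating $K_1$ and $K_2$ through $\Phi$.

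The missing idea is to split $f$ into \emph{disjoint} pieces that already lie in $I_1$, instead of truncating it.
\begin{enumerate}
\item For each $i$ and $n\in\nat$, let $C_{i,n}$ be the band generated by $(nu_i-f)^+$. Then $P_{C_{i,n}}(nu_i-f)=(nu_i-f)^+\ge 0$, so $P_{C_{i,n}}f\le nu_i$ and hence $P_{C_{i,n}}f\in I_1$.
\item The bands $C_{i,n}$ increase in $n$ to $B_{u_i}$. Indeed, any $w\in(B_{u_i})_+$ disjoint from all $C_{i,n}$ satisfies $nP_w u_i\le P_w f$ for all $n$, so $P_w u_i=0$ and $w=0$.
\item Hence $f$ is the supremum of the disjoint family $f_{i,n}\coloneqq P_{C_{i,n}\cap C_{i,n-1}^\rmd}f\in(I_1)_+$, with $C_{i,0}=\set{0}$.
\item Since $\Psi$ is a Riesz isomorphism on $I_1$, the family $(\Psi(f_{i,n}))_{i,n}$ is disjoint in $E_2$, and its supremum exists by lateral completeness.
\item Every $h\in I_1$ with $0\le h\le f$ is the supremum of its pieces $P_{C_{i,n}\cap C_{i,n-1}^\rmd}h\le f_{i,n}$. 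Order continuity of $\Psi$ on $I_1$ then gives $\Psi(h)\le\sup_{i,n}\Psi(f_{i,n})$.
\end{enumerate}
Thus $\Dset{\Psi(h)}{h\in I_1,\ 0\le h\le f}$ is order bounded in $E_2$, and its supremum gives a decomposition-independent definition of $\Psi(f)$. Additivity, preservation of finite infima, and the inverse built from $\Phi^{-1}$ then follow from the cofinality argument above.
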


For an element $f \in E^u$, an element $g \in E^u$ is called a \emph{partial inverse of $f$} if $fg = gf = P_{\abs{f}}(e)$.
We call $g$ a \emph{canonical partial inverse of $f$} if $g \in B_{\abs{f}}$.
It follows from the combination of \cite[Theorem 5]{Huijsmans-dePagter-1986} and \cite[Remark 3.3]{Kuo-Rogans-Watson-2017} that every element $f \in E^u$ has a unique canonical partial inverse, which is positive provided that $f$ is positive, see also \cite[Theorem 2.6]{Kalauch-Kuo-Watson-2024}.
We denote by $f^{-1}$ the canonical partial inverse of $f$.

In \cite{Buskes-daPagter-vanRooij-1991}, Buskes, de Pagter, and van Rooij introduced a functional calculus on Riesz spaces.
A different type of a functional calculus was given by Grobler in \cite{Grobler-2014}.
In \cite{Azouzi-Trabelsi-2017}, it is shown that both functional calculi coincide for continuous functions.
Moreover, if $E$ is Dedekind complete, it follows from \cite[Theorem 3.7]{Buskes-daPagter-vanRooij-1991} that $f(x)$ exists for every $x \in E$ and every positively homogeneuous function $f \colon \real \to \real$.
In particular, for $x \in E_+$ and $p \in [1,\infty)$, the $p$-th and $\frac{1}{p}$-th powers $x^p$ and $x^{\frac{1}{p}}$ can be defined using the functional calculus.
For the following lemma, we refer to \cite[page 805]{Azouzi-Trabelsi-2017}.

\begin{lemma} \label{thm:order-continuity-functional-rep}
    Let $E$ be a Dedekind complete Riesz space with weak order units, let $f \colon \real_+ \to \real_+$ be concave and positively homogeneous, and let $(x_\alpha)_{\alpha \in A}$ be a net in $E_+$ and $x \in E_+$ such that $x_\alpha \downarrow x$.
    Then $f(x_\alpha) \downarrow f(x)$.
\end{lemma}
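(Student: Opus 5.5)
Throughout, I read $f(x)$ in the sense of the functional calculus of \cite{Buskes-daPagter-vanRooij-1991}. If $f$ is literally a function of one variable, positive homogeneity gives $f(t)=f(1)t$ for all $t\geq 0$ with $f(1)\geq 0$. Then $f(x_\alpha)=f(1)x_\alpha\downarrow f(1)x=f(x)$ at once, and there is nothing to prove. The substance lies in the case the lemma is really used for: $f\colon \real_+^n\to\real_+$ concave and positively homogeneous, applied to $n$-tuples $x_\alpha=(x_\alpha^1,\dots,x_\alpha^n)\downarrow x=(x^1,\dots,x^n)$ coordinatewise, e.g. $f(s,t)=s^{1/p}t^{1/q}$. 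I plan the proof for this case, since it contains the one-variable case.

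\emph{Step 1: monotonicity.} First I show $f$ is increasing on $\real_+^n$. Concavity and homogeneity give superadditivity,
\[
f(s+t)=2f\bigl(\tfrac{s+t}{2}\bigr)\geq f(s)+f(t).
\]
Since $f\geq 0$, this yields $f(s+t)\geq f(s)$. The functional calculus is a lattice homomorphism that preserves the pointwise order of functions, so $s\leq t$ in $E_+^n$ implies $f(s)\leq f(t)$. Hence $(f(x_\alpha))_\alpha$ is decreasing and bounded below by $f(x)$. Because $E$ is Dedekind complete, $y\coloneqq\inf_\alpha f(x_\alpha)$ exists and satisfies $y\geq f(x)$.

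\emph{Step 2: the reverse inequality.} A concave, positively homogeneous, continuous function on the cone $\real_+^n$ is the pointwise infimum of the linear functionals dominating it. So there is a set $\Lambda\subseteq\real^n$ with
\[
f(t)=\inf_{a\in\Lambda}\sum_i a_i t_i \quad\text{for } t\in\real_+^n.
\]
By the construction of the functional calculus (suprema and infima of linear combinations), the same identity holds in $E$, namely $f(z)=\inf_{a\in\Lambda}\sum_i a_i z^i$ for $z\in E_+^n$. Fix $a\in\Lambda$. Then $y\leq f(x_\alpha)\leq\sum_i a_i x_\alpha^i$ for every $\alpha$. Since each coordinate net converges in order and finite linear combinations preserve order convergence, $\sum_i a_i x_\alpha^i\to\sum_i a_i x^i$ in order. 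This gives $y\leq\sum_i a_i x^i$. Taking the infimum over $a\in\Lambda$ yields $y\leq f(x)$, and therefore $f(x_\alpha)\downarrow f(x)$.

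\emph{Main obstacle.} The delicate point is the transfer in Step 2 of the infimum representation from $\real_+^n$ to $E$. The infimum over $\Lambda$ is in general infinite, while the functional calculus of \cite{Buskes-daPagter-vanRooij-1991} is built from finite lattice-linear expressions.

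I would handle this with a Kakutani/Yosida-type representation. First restrict to the principal ideal generated by $\sum_i x_{\alpha_0}^i$ for some fixed $\alpha_0$. This ideal contains all relevant elements for $\alpha\geq\alpha_0$, together with $x$ and $f(x)$, since $f(z)\leq C\sum_i z^i$ on the cone. This ideal is a $C(K)$-space, and there $f(z)$ is computed pointwise, so $f(z)(k)=\inf_a\sum_i a_i z^i(k)$ for each $k\in K$.

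It is then enough to show that $f(x)$ is the greatest lower bound in $E$ of $\{\sum_i a_i x^i : a\in\Lambda\}$, not merely a pointwise lower bound. Let $w\in E$ satisfy $w\leq\sum_i a_i x^i$ for all $a\in\Lambda$. Pointwise on $K$ this gives $w\leq f(x)$ wherever $w$ is represented, and Dedekind completeness of $E$ (via order density of the representation) upgrades this to $w\leq f(x)$ in $E$. If this step proves awkward, the fallback is to approximate $f$ from above by finite minima $\min_{a\in F}\sum_i a_i t_i$ over finite $F\subseteq\Lambda$, uniformly on the simplex, and pass to the limit relatively uniformly.
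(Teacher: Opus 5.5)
Your proof is correct. The paper has no argument of its own here, only a pointer to Azouzi--Trabelsi, so there is no competing route to compare with.

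You are right that the lemma as printed is trivial, since a concave, positively homogeneous $f\colon\real_+\to\real_+$ is $t\mapsto f(1)t$. What \cref{thm:p-norm-o-continuous} actually needs, when it applies the lemma to $t\mapsto t^{1/p}$, is your two-variable version. There $z^{1/p}=F(z,e)$ with $F(s,t)=s^{1/p}t^{1-1/p}$, evaluated along $(T(x_\alpha^p),e)$, whose second coordinate is constant.

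In that setting your continuity hypothesis is genuinely needed and not automatic. Take $h(s,t)=s+t$ for $s,t>0$ and $h(s,t)=0$ otherwise. It is concave, positively homogeneous and nonnegative on $\real_+^2$. Yet already in $E=\real$ one has $(1,\tfrac1m)\downarrow(1,0)$ while $h(1,\tfrac1m)=1+\tfrac1m\not\to 0=h(1,0)$.

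Two places can be tightened.

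\textbf{Step~1.} Order preservation of the calculus in the function variable (for a fixed tuple) does not by itself give $f(s)\le f(t)$ for tuples $s\le t$. Argue pointwise instead, in a $C(K)$ representation of a principal ideal containing both tuples, as you do later.

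\textbf{The ``main obstacle''.} It disappears, and you need neither the infimum identity in $E$ nor the finite-minima fallback.
\begin{enumerate}
\item Set $w=\sum_i x^i_{\alpha_0}$ and $M=\max\dset{f(t)}{t\in\real_+^n,\ \sum_i t_i=1}$. Then $0\le f(x)\le y\le f(x_{\alpha_0})\le Mw$.
\item So $y$ itself lies in $A_w\cong C(K)$, together with $x^1,\dots,x^n$ and $f(x)$.
\item Every $a\in\Lambda$ has $a_i\ge 0$ (test at the unit vectors). You get $y\le\sum_i a_ix^i$ in $E$ from $\sum_i a_ix^i_\alpha\downarrow\sum_i a_ix^i$, so this inequality holds pointwise on $K$.
\item Taking the pointwise infimum over $a\in\Lambda$ gives $y\le f(x)$ directly.
\end{enumerate}

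This is also the right order of operations. Order infima of decreasing nets in $C(K)$ need not be pointwise infima, so $\inf_\alpha f(x_\alpha)$ cannot simply be evaluated pointwise. The linear majorants are exactly what let you pass to the order limit first and only then go pointwise. Incidentally, this argument never uses the weak order unit.
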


Let $F$ be an $f$-algebra and $E$ an $F$-module.
An \emph{$F$-valued norm} on $E$ is a map $\norm{\cdot} \colon E \to F$ that satisfies the usual norm axioms:
\begin{enumItem}
    \item For every $x,y \in E$, one has $\norm{x + y} \leq \norm{x} + \norm{y}$.
    \item For every $x \in F$ and $y \in E$, one has $\norm{xy} = \abs{x} \norm{y}$.
    \item For every $x \in E$, one has $\norm{x} = 0$ if and only if $x = 0$.
\end{enumItem}

In \cite[Section 6]{Kuo-Labuschagne-Watson-2005}, expectation operators, i.e., conditional expectation operators with one-dimensional range, are considered.
It is shown that it suffices to consider strictly positive expectation operators as, otherwise, one can factor out the null ideal.
Next, we remark that the same procedure works for conditional expectation operators.

\begin{remark}
    Let $E$ be a Dedekind complete Riesz space with weak order unit $e \in E_+$ and let $T \colon E \to E$ be a conditional expectation with $T(e) = e$.
    Denote by
    \[
        N_T \coloneqq \Dset{f \in E}{T\abs{f} = 0}
    \]
    the \emph{null ideal} of $T$ and by $C_T \coloneqq N_T^\rmd$ the \emph{carrier} of $T$.
    Note that $N_T \subseteq \ker T$.
    Since $T$ is order continuous, $N_T$ is a band in $E$, cf. \cite[Theorem 1.61(1)]{Aliprantis-Burkinshaw-2006}.
    As $E$ is Dedekind complete, $N_T$ is a projection band, and $N_T$ and $C_T$ are Dedekind complete.
    \begin{enumList}
        \item We show that
        \[
            \tilde{T} \colon C_T \to C_T, \quad f \mapsto P_{C_T}(Tf),
        \]
        is a strictly positive conditional expectation on $C_T$.
        
        Since $T$ and $P_{C_T}$ are positive and order continuous, $\tilde{T}$ is also positive and order continuous.
        For every $f \in C_T$, we have
        \begin{align*}
            \abs{f} \wedge (n P_{C_T}(e))
            & = P_{C_T}(\abs{f}) \wedge (n P_{C_T}(e)) \\
            & = P_{C_T}(\abs{f} \wedge n e)
              \uparrow  P_{C_T}(\abs{f})
              = \abs{f},
        \end{align*}
        which shows that $P_{C_T}(e)$ is a weak order unit of $C_T$.
        Moreover, as $P_{N_T}(e) \in \ker(T)$, we obtain 
        \begin{align*}
            \tilde{T}(P_{C_T}(e))
            & = P_{C_T}(T(P_{C_T}(e)))
              = P_{C_T}(T(P_{C_T}(e) + P_{N_T}(e))) \\
            & = P_{C_T}(Te)
              = P_{C_T}(e).
        \end{align*}
        For every $f \in C_T$, we have $P_{N_T} T f \in \ker T$, thus
        \begin{align*}
            \tilde{T}^2 f
            & = P_{C_T} T P_{C_T} T f \\
            & = P_{C_T} T (P_{C_T} T f + P_{N_T} T f) \\
            & = P_{C_T} T^2 f
              = P_{C_T} T f
              = \tilde{T} f.
        \end{align*}
        Hence, $\tilde{T}$ is a projection.

        Next, we show that $R(\tilde{T})$ is a Riesz subspace of $C_T$.
        To this end, let $f \in C_T$.
        Since $R(T)$ is a Riesz subspace of $E$, there exists $g \in E$ such that $\abs{Tf} = Tg$.
        Then
        \begin{align*}
            \abs{\tilde{T} f}
            & = \abs{P_{C_T} T f}
              = P_{C_T} \abs{T f}
              = P_{C_T} T g \\
            & = P_{C_T} T (P_{C_T} g + P_{N_T} g)
              = P_{C_T} T P_{C_T} g
              = \tilde{T}(P_{C_T} g)
              \in R(\tilde{T}).
        \end{align*}
        Hence, $R(\tilde{T})$ is a Riesz subspace of $C_T$.

        Next, we show that $R(\tilde{T})$ is Dedekind complete.
        To this end, let $A \subseteq C_T$ be a non-empty subset such that $\tilde{T}[A]$ is bounded from above in $R(\tilde{T})$.
        Then there exists $g \in C_T$ such that, for all $f \in A$, we have $\tilde{T}(f) \leq \tilde{T}(g)$.
        For every $f \in A$, it follows that
        \[
            T f
            = P_{C_T}(Tf) + P_{N_T}(Tf)
            = \tilde{T} f + P_{N_T}(Tf)
            \leq \tilde{T} g + P_{N_T}(Tf).
        \]
        Applying $T$ again while using that $T$ is a projection and $N_T \subseteq \ker(T)$, we obtain $Tf \leq T(\tilde{T} g)$ for all $f \in A$.
        Hence, $T[A]$ is bounded from above in $R(T)$.
        Since $R(T)$ is Dedekind complete, there exists $h \in E$ such that $Th = \sup\nolimits_{R(T)} T[A]$.
        As $C_T$ is Dedekind complete, $\sup\nolimits_{C_T} \tilde{T}[A]$ exists and it follows that
        \begin{align*}
            \sup\nolimits_{C_T} \tilde{T}[A]
            & = \sup\nolimits_{C_T} P_{C_T}[T[A]]
              = P_{C_T}(\sup\nolimits_{R(T)} T[A])
              = P_{C_T}(T h) \\
            & = P_{C_T} T (P_{C_T} h + P_{N_T} h)
              = P_{C_T} T P_{C_T} h
              = \tilde{T}(P_{C_T} h)
              \in R(\tilde{T}).
        \end{align*}
        Consequently, $\sup\nolimits_{R(\tilde{T})} \tilde{T}[A] = \sup\nolimits_{C_T} \tilde{T}[A]$ and, thus, $R(\tilde{T})$ is Dedekind complete.

        Finally, we show that $\tilde{T}$ is strictly positive.
        Let $f \in (C_T)_+$ be such that $\tilde{T}f = 0$.
        Then $P_{C_T}(Tf) = 0$, which implies $Tf \in N_T \subseteq \ker T$.
        As $T$ is a projection, we obtain $Tf = T(Tf) = 0$, thus $f \in N_T$.
        Then $f \in C_T \cap N_T$, which gives $f = 0$.
        Hence, $\tilde{T}$ is strictly positive.

        \item Note that $E/N_T$ is also a Riesz space, where, for $f,g \in E$, one defines $f + N_T \leq g + N_T$ if there exist $h_f,h_g \in N_T$ such that $f + h_f \leq g + h_g$, cf. \cite[Theorem 2.22]{Aliprantis-Burkinshaw-2006}.
        Moreover, the map
        \[
            \Phi \colon C_T \to E/N_T, \quad f \mapsto f + N_T,
        \]
        is a Riesz isomorphism with inverse
        \[
            \Phi^{-1} \colon E/N_T \to C_T, \quad f \mapsto P_{C_T}(f).
        \]
        Hence, it follows with the aid of (a) that
        \[
            \Phi \circ \tilde{T} \circ \Phi^{-1} \colon E/N_T \to E/N_T, \quad f + N_T \mapsto Tf + N_T,
        \]
        is a strictly positive conditional expectation on $E/N_T$.
    \end{enumList}
\end{remark}

Occasionally, we consider band projections onto principal bands generated by elements in $R(T)$ and apply them to elements in $R(T)$.
In this case, one might ask whether the principal band is generated as a band in $R(T)$ or as a band in $E$.
We show that both approaches give the same result in a general setting and then apply this to conditional Riesz triples.

\begin{proposition} \label{thm:projections-E-vs-D}
    Let $E$ be a Riesz space with the principal projection property, $D \subseteq E$ a sequentially order closed Riesz subspace, and $f \in D$.
    Denote by $B_{f,E} \subseteq E$ and $B_{f,D} \subseteq D$ the principal bands generated by $f$ in $E$ and $D$, respectively, and let $P_{f,E} \colon E \to B_{f,E}$ and $P_{f,D} \colon D \to B_{f,D}$ be their respective band projections.
    Then:
    \begin{enumList}
        \item \label{it:thm:projections-E-vs-D:projections}
        $P_{f,E}|_{D} = P_{f,D}$.
        
        \item \label{it:thm:projections-E-vs-D:bands}
        $B_{f,E} \cap D = B_{f,D}$.
    \end{enumList}
\end{proposition}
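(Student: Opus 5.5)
The key input is the standard formula for band projections onto principal bands in a space with the principal projection property: for $g \in E_+$ and $f \in E$,
\[
    P_{f,E}(g) = \sup_{n \in \nat} \bigl(g \wedge n\abs{f}\bigr) ,
\]
with the supremum taken in $E$. Extending by linearity via $g = g^+ - g^-$ gives the formula for arbitrary $g \in E$. I prove \ref{it:thm:projections-E-vs-D:projections} first and then deduce \ref{it:thm:projections-E-vs-D:bands} from it.

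For \ref{it:thm:projections-E-vs-D:projections}, let $g \in D_+$ and put $h_n \coloneqq g \wedge n\abs{f}$. Since $D$ is a Riesz subspace and $f,g \in D$, each $h_n$ lies in $D$. The sequence $(h_n)$ increases and is bounded above by $g$. In $E$ it satisfies $h_n \uparrow P_{f,E}(g)$.

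Sequential order closedness of $D$ now gives $P_{f,E}(g) \in D$. Moreover, this element is also the supremum of $(h_n)$ in $D$: it is an upper bound lying in $D$, and any upper bound in $D$ is an upper bound in $E$.

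Next I need that $D$ itself has band projections onto $B_{f,D}$, given by the same formula. Here I would verify directly that $g = P_{f,E}(g) + (g - P_{f,E}(g))$ is the required decomposition in $D$.
\begin{itemize}
    \item The first summand $P_{f,E}(g) = \sup_D h_n$ lies in $B_{f,D}$. Indeed, each $h_n$ lies in the ideal of $D$ generated by $f$, and a band in $D$ is closed under suprema taken in $D$.
    \item The second summand $g - P_{f,E}(g)$ lies in $D$ and is disjoint from $f$ in $E$. Disjointness is the same in $D$ and $E$, so it lies in $\{f\}^{\rmd}$ computed in $D$.
    \item Since $\{f\}^{\rmd\rmd} \cap \{f\}^{\rmd} = \set{0}$ in $D$, we get $B_{f,D} \oplus \{f\}^{\rmd} = D$. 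Hence $B_{f,D}$ is a projection band with $P_{f,D}(g) = P_{f,E}(g)$.
\end{itemize}
Linearity then extends the equality to all $g \in D$.

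For \ref{it:thm:projections-E-vs-D:bands}, I use the characterization that $x \in B_{f,D}$ if and only if $P_{f,D}x = x$, which holds for projection bands, and likewise in $E$. For $x \in D$ we then have
\[
    x \in B_{f,E} \iff P_{f,E}x = x \iff P_{f,D}x = x \iff x \in B_{f,D} ,
\]
using \ref{it:thm:projections-E-vs-D:projections} in the middle step.

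The main obstacle is the passage from the $E$-supremum to the $D$-band. One must show that $P_{f,E}(g)$ lands in $D$, which uses sequential order closedness and is why the countable formula with $n \in \nat$ matters. One must also show that $B_{f,D}$ is a projection band in $D$, since $D$ is not assumed to have the principal projection property.
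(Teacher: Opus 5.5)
Your proposal is correct and follows essentially the same route as the paper. In both, $P_{f,E}(g)=\sup_{n}(g\wedge n\abs{f})$ lies in $D$ by sequential order closedness and is therefore also the supremum in $D$, and (b) is deduced from (a) via the characterization $x\in B \iff P_B x = x$. The only difference is that you explicitly verify, through the decomposition $g = P_{f,E}(g) + (g - P_{f,E}(g))$, that $B_{f,D}$ is a projection band in $D$. The paper instead takes the existence of $P_{f,D}$ for granted, together with the formula $P_{f,D}(g)=\sup_D(g\wedge n\abs{f})$ that it uses for $P_{f,E}(g)\le P_{f,D}(g)$, so your version is slightly more self-contained.
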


\begin{proof}
    \ref{it:thm:projections-E-vs-D:projections}
    Let $g \in D_+$.
    Note that $P_{f,E}(g) \leq P_{f,D}(g)$.
    Since $D$ is a sequentially order closed Riesz subspace of $E$, we have
    \[
        P_{f,E}(g)
        = \sup\Dset{g \wedge n \abs{f}}{n \in \nat}
        \in D.
    \]
    For every $m \in \nat$, we have $g \wedge m \abs{f} \leq P_{f,E}(g) \in D$, which implies $P_{f,D}(g) \leq P_{f,E}(g)$.
    Hence, $P_{f,E}(g) = P_{f,D}(g)$.

    \smallskip

    \ref{it:thm:projections-E-vs-D:bands}
    It follows from \ref{it:thm:projections-E-vs-R(T):projections} that
    \begin{align*}
        B_{f,E} \cap D
        & = \Dset{g \in D}{P_{f,E}(\abs{g}) = \abs{g}} \\
        & = \Dset{g \in D}{P_{f,D}(\abs{g}) = \abs{g}}
          = B_{f,D}.
        \qedhere
    \end{align*}
\end{proof}

\begin{corollary} \label{thm:projections-E-vs-R(T)}
    Let $(E,e,T)$ be a conditional Riesz triple and let $f \in R(T)$.
    Denote by $B_{f,E} \subseteq E$ and $B_{f,R(T)} \subseteq R(T)$ the principal bands generated by $f$ in $E$ and $R(T)$, respectively, and let $P_{f,E} \colon E \to B_{f,E}$ and $P_{f,R(T)} \colon R(T) \to B_{f,R(T)}$ be their respective band projections.
    Then:
    \begin{enumList}
        \item \label{it:thm:projections-E-vs-R(T):projections}
        $P_{f,E}|_{R(T)} = P_{f,R(T)}$.
        
        \item \label{it:thm:projections-E-vs-R(T):bands}
        $B_{f,E} \cap R(T) = B_{f,R(T)}$.
    \end{enumList}
\end{corollary}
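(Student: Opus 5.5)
This follows directly from \cref{thm:projections-E-vs-D} with $D \coloneqq R(T)$. It therefore suffices to verify the two hypotheses of that proposition: that $E$ has the principal projection property, and that $R(T)$ is a sequentially order closed Riesz subspace of $E$. Once both are checked, parts \ref{it:thm:projections-E-vs-R(T):projections} and \ref{it:thm:projections-E-vs-R(T):bands} are exactly parts \ref{it:thm:projections-E-vs-D:projections} and \ref{it:thm:projections-E-vs-D:bands} of \cref{thm:projections-E-vs-D}.

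The principal projection property is immediate. $E$ is Dedekind complete by the definition of a conditional Riesz triple, so every band in $E$ is a projection band by \cite[Theorem 1.42]{Aliprantis-Burkinshaw-2006}. That $R(T)$ is a Riesz subspace is also part of the definition of a conditional expectation operator.

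The only real step is showing that $R(T)$ is sequentially order closed, and in fact order closed, in $E$. Take a sequence (or net) $(g_n)$ in $R(T)$ with $g_n \to g$ in order in $E$. Since $T$ is order continuous, $T g_n \to T g$ in order. But $T g_n = g_n$ because $T$ is a projection and $g_n \in R(T)$. Order limits in the Archimedean space $E$ are unique, so $g = T g \in R(T)$.

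I expect no genuine obstacle here. The one point to keep straight is that order closedness is meant for order convergence in $E$, which is exactly what the proof of \cref{thm:projections-E-vs-D} uses: there the supremum $\sup_n (g \wedge n\abs{f})$ is taken in $E$ and must lie in $D$. Since $(g \wedge n\abs{f})_n$ is an increasing sequence in $R(T)$, order-bounded by $g$, its supremum in $E$ exists by Dedekind completeness and is its order limit. The argument above then places it in $R(T)$.
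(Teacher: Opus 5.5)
Your proposal is correct and follows the paper's proof. The paper likewise applies \cref{thm:projections-E-vs-D} with $D = R(T)$ and justifies the order closedness of $R(T)$ solely by $T$ being an order continuous projection. Your argument $g = Tg$ via uniqueness of order limits, and your check of the principal projection property from Dedekind completeness, just make explicit what the paper leaves implicit.
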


\begin{proof}
    Since $T$ is an order continuous projection, $R(T)$ is order closed.
    Hence, the claim follows directly from \cref{thm:projections-E-vs-D}.
\end{proof}

In view of \cref{thm:projections-E-vs-R(T)}, we assume that band projections $P_f$ for $f \in R(T)$ are defined on $E$.

The following lemma is taken from \cite[Corollary 2.3]{Kuo-Labuschagne-Watson-2005-b}.
Since the reference is not widely accessible, we provide a complete proof.

\begin{lemma} \label{thm:proj-ineq}
    Let $(E,e,T)$ be a conditional Riesz triple and let $f \in E_+$.
    Then $P_f \leq P_{Tf}$.
\end{lemma}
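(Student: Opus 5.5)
We reduce the inequality $P_f \leq P_{Tf}$ of band projections to the statement that $f$ lies in the band $B_{Tf}$ generated by $Tf$ in $E$. Indeed, once $f \in B_{Tf}$, we have $B_f \subseteq B_{Tf}$. For band projections in a Dedekind complete space, $B_f \subseteq B_{Tf}$ gives $P_f = P_{Tf} P_f \leq P_{Tf}$ on $E_+$, since $0 \leq P_f g \leq g$ and $P_{Tf}$ acts as the identity on $B_f$. So the whole task is to show $P_{Tf}^\perp f = 0$, where $P_{Tf}^\perp \coloneqq \id - P_{Tf}$ is the projection onto $B_{Tf}^\rmd$.

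Set $g \coloneqq (\id - P_{Tf})(f) \in E_+$. By strict positivity of $T$, it suffices to show $Tg = 0$. The key tool is that band projections onto bands generated by elements of $R(T)$ commute with $T$. I would prove this directly from the averaging property, or more elementarily as follows.

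Note $P_{Tf}(e) = \sup_n (e \wedge n Tf)$. Since $R(T)$ is an order closed Riesz subspace containing $e$ and $Tf$, this supremum lies in $R(T)$; this is the same argument as in \cref{thm:projections-E-vs-D}. Write $q \coloneqq P_{Tf}(e)$, a component of $e$ lying in $R(T)$, and $q' \coloneqq e - q \in R(T)$. We want $T(P_{q'} f) = P_{q'} T f$.

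For $e$-step functions and components I would use the standard fact that, for a component $p$ of $e$ in $R(T)$, $T P_p = P_p T$. This is where the main obstacle lies, since it is essentially the averaging property of $T$, and the paper has not yet stated it.

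To prove it, first take $0 \leq h \leq n e$. Then
\[
0 \leq T(P_{q'} h) \leq n T(q') = n q',
\]
so $T(P_{q'} h) \in B_{q'}$, and similarly $T(P_q h) \in B_q$. Since $Th = T(P_q h) + T(P_{q'} h)$ splits along the disjoint bands $B_q$ and $B_{q'}$, we get $P_{q'} T h = T P_{q'} h$. For general $h \in E_+$, we use $h \wedge n e \uparrow h$ (as $e$ is a weak order unit) together with the order continuity of $T$ and of $P_{q'}$.

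Applying this with $h = f$ gives
\[
Tg = T(P_{q'} f) = P_{q'} T f = 0,
\]
because $Tf \in B_q$ and $B_q$ is disjoint from $B_{q'}$. Here we use that $P_{Tf} = P_q$, since $B_q = B_{Tf}$: indeed $q = P_{Tf}(e)$ and $e$ is a weak order unit. Strict positivity of $T$ then yields $g = 0$, hence $f \in B_{Tf}$, and the reduction in the first paragraph completes the proof.
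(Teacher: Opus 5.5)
Your proof is correct, but it reaches the key fact $P_{q'}f=0$, where $q'=e-P_{Tf}(e)\in R(T)$, by a different route than the paper.

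\textbf{The paper's route.} The paper never needs $T$ to commute with $P_{q'}$. It applies strict positivity to $f\wedge q'$ rather than to $P_{q'}f$, using only positivity of $T$ and $Tq'=q'$:
$0\le T(f\wedge q')\le Tf\wedge Tq'=Tf\wedge q'=0$, so $f\wedge q'=0$.
It then passes to the product $f\cdot q'=0$ in $E^u$ and reads off $f-P_{Tf}f=f\cdot q'=0$. The detour through $E^u$ is in fact dispensable, since $f\perp q'$ already gives $f\in B_{q'}^{\rmd}=B_q=B_{Tf}$.

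\textbf{Your route.} You first prove $TP_{q'}=P_{q'}T$ and then apply strict positivity to $P_{q'}f$. The proof of the commutation is sound: you truncate with $h\wedge ne$, bound $0\le T(P_{q'}h)\le nTq'=nq'$, split $Th$ along $B_q\oplus B_{q'}$, and pass to the limit using order continuity of $T$ and $P_{q'}$.

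\textbf{Comparison.} Your route costs an extra lemma. That lemma is a self-contained proof, free of $E^u$, of a special case of the averaging property, namely $T(q'h)=q'\,Th$ for components $q'$ of $e$ lying in $R(T)$. This is more than the statement requires. The paper's argument is the more economical one here: it is essentially a two-line consequence of $T(x\wedge y)\le Tx\wedge Ty$ combined with strict positivity.
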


\begin{proof}
    It suffices to show that $f = P_{Tf}f$, as this implies that $f \in B_{Tf}$ and, thus, $P_f \leq P_{Tf}$.
    By \cref{thm:projections-E-vs-R(T)}, we have $P_{Tf}e \in R(T)$.
    Denote $g \coloneqq e - P_{Tf}e \in R(T)_+$.
    As $Tf \in B_{Tf}$ and $g \in B_{Tf}^d$, we have $Tf \wedge g = 0$, thus 
    \[
        0
        \leq T(f \wedge g)
        \leq Tf \wedge Tg
        = Tf \wedge g
        = 0.
    \]
    This implies $T(f \wedge g) = 0$.
    Since $T$ is strictly positive, we obtain $f \wedge g = 0$, thus $f \cdot g = 0$, where the product is taken in $E^u$.
    Then we have $f - P_{Tf} f = f \cdot g = 0$, thus $f = P_{Tf} f$, as required.
\end{proof}

\section{The \texorpdfstring{$T$}{T}-universal completion and properties of \texorpdfstring{$L^p$}{Lp}-type spaces} \label{sec:T-universal-completion-Lp-spaces}

We give a brief exposition on the $T$-universal completion and $L^p$-type spaces in Riesz spaces.
In \cite{Kuo-Labuschagne-Watson-2005}, the $T$-universal completion is defined constructively.
We give an axiomatic definition of the $T$-universal completion and show that it is unique up to Riesz isomorphism. 
In this section, we consider the universal completion $E^u$ of an Archimedean Riesz space $E$ together with its embedding $j \colon E \to E^u$, i.e., $j$ is a bipositive linear map such that $j[E]$ is an order dense Riesz subspace of $E^u$.

\begin{definition}
    Let $(E,e,T)$ be a conditional Riesz triple and $E^u$ the universal completion of $E$ with its embedding $j \colon E \to E^u$.
    \begin{enumList}
        \item $E$ is called \emph{$T$-universally complete} if every non-empty upwards directed subset $A \subseteq E$ such that $j[T[A]]$ is bounded from above in $E^u$ has a supremum in $E$.
        The conditional Riesz triple $(E,e,T)$ is called \emph{$T$-universally complete} if $E$ is $T$-universally complete.

        \item A \emph{$T$-universal completion} of $(E,e,T)$ is a conditional Riesz triple $(\tilde{E},\tilde{e},\tilde{T})$ together with a bipositive linear map $i \colon E \to \tilde{E}$ with the following properties:
        \begin{enumItem}
            \item $\tilde{E}$ is $\tilde{T}$-universally complete.
            \item $i[E]$ is an order dense Riesz subspace of $\tilde{E}$.
            \item $i(e) = \tilde{e}$.
            \item $\tilde{T} \circ i = i \circ T$.
        \end{enumItem}
    \end{enumList}
\end{definition}

Note that, as $i[E]$ is an order dense Riesz subspace of $\tilde{E}$, the map $i$ is an order continuous Riesz homomorphism, cf. \cite[Theorem 23.2]{Zaanen-1997}.
As the universal completion is unique up to Riesz isomorphism, $T$-universal completeness does not depend on the concrete choice of the universal completion.
Moreover, if $(\tilde{E},\tilde{e},\tilde{T})$ is a $T$-universal completion of $(E,e,T)$ with embedding $i \colon E \to \tilde{E}$, then $i[E]$ is an ideal in $\tilde{E}$, cf. \cite[Theorem 2.31]{Aliprantis-Burkinshaw-2006}.

\begin{theorem} \label{thm:T-universal-completion}
    Let $(E,e,T)$ be a conditional Riesz triple.
    \begin{enumList}
        \item \label{it:thm:T-universal-completion:existence}
        There exists a $T$-universal completion of $(E,e,T)$.
        
        \item \label{it:thm:T-universal-completion:unique}
        Let $(\tilde{E}_1,\tilde{e}_1,\tilde{T}_1)$ and $(\tilde{E}_2,\tilde{e}_2,\tilde{T}_2)$ be $T$-universal completions of $(E,e,T)$ with respective embeddings $i_1 \colon E \to \tilde{E}_1$ and $i_2 \colon E \to \tilde{E}_2$.
        Then there exists a Riesz isomorphism $\Phi \colon \tilde{E}_1 \to \tilde{E}_2$ such that $i_2 = \Phi \circ i_1$.
        In particular, the following diagram commutes:
        \begin{center}
            \begin{tikzcd}
                \tilde{E}_1 \ar[bend left]{rr}{\Phi} \ar[hookleftarrow]{r}{i_1} \ar{d}{\tilde{T}_1} & E \ar[hookrightarrow]{r}{i_2} \ar{d}{T} & \tilde{E}_2 \ar{d}{\tilde{T}_2} \\
                \tilde{E}_1 \ar[bend right]{rr}[swap]{\Phi} \ar[hookleftarrow]{r}{i_1} & E \ar[hookrightarrow]{r}{i_2} & \tilde{E}_2
            \end{tikzcd}
        \end{center}
    \end{enumList}
\end{theorem}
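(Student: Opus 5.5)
For existence, I would follow the construction of \cite{Kuo-Labuschagne-Watson-2005} inside $E^u$ and check the new axioms. Identify $E$ with $j[E] \subseteq E^u$. Let $\tilde{E}$ be the set of all $f \in E^u$ such that there is an upwards directed set $A \subseteq E_+$ with $\sup_{E^u} A = \abs{f}$ and $T[A]$ bounded above in $E^u$. On $\tilde{E}_+$ define
\[
    \tilde{T}(f) \coloneqq \sup\nolimits_{E^u} \Dset{T(g)}{g \in E_+,\ g \leq f},
\]
and extend to $\tilde{E}$ by linearity. I take $i \coloneqq j$ (with codomain restricted to $\tilde{E}$) and $\tilde{e} \coloneqq e$.

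The steps, in order, are as follows.
\begin{enumerate}[label=\textrm{(\arabic*)}]
    \item $\tilde{E}$ is an ideal of $E^u$ containing $E$. Hence $E$ is order dense in $\tilde{E}$, and $\tilde{E}$ is Dedekind complete with weak order unit $e$.
    \item $\tilde{T}$ is well defined, additive on $\tilde{E}_+$, and order continuous. For this I use that $T$ is order continuous and $E$ is order dense, so $\tilde{T}(f) = \sup T[A]$ for every directed $A \uparrow f$.
    \item $\tilde{T}$ is a strictly positive projection with $\tilde{T}|_E = T$ and $R(\tilde{T})$ a Dedekind complete Riesz subspace. The key observation is that $R(\tilde{T})$ is the set of order limits in $E^u$ of elements of $R(T)$. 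Strict positivity follows because $0 < f \in \tilde{E}$ dominates some $0 < g \in E$, so $\tilde{T}f \geq Tg > 0$.
    \item $\tilde{E}$ is $\tilde{T}$-universally complete. Take $A \subseteq \tilde{E}$ directed upward with $\tilde{T}[A]$ bounded in $\tilde{E}^u = E^u$; note $\tilde{E}$ is order dense in $E^u$, so this universal completion is still $E^u$. Then $\sup_{E^u} A$ exists, since $E^u$ is universally complete and $A$ is bounded above via $\tilde{T}$ and strict positivity. After translating so that $A \subseteq \tilde{E}_+$, the set of all $g \in E_+$ lying below some element of $A$ witnesses that this supremum lies in $\tilde{E}$.
\end{enumerate}
I expect the boundedness claim in step (4) to be the main obstacle: one must show that a directed set with $T$-images bounded in $E^u$ is itself bounded in $E^u$. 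My first attempt would use \cref{thm:proj-ineq}, passing to the components $P_{Th}$ and partial inverses in $E^u$ to produce a bound of the form $\sup \tilde{T}[A]$ times a suitable unit-type element. If that fails, I would argue by contradiction using disjoint pieces where the supremum "escapes" in $E^u$.

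For uniqueness, set $D_k \coloneqq i_k[E]$, which is order dense in $\tilde{E}_k$ and hence in the universal completion $\tilde{E}_k^u$. Then $i_2 \circ i_1^{-1} \colon D_1 \to D_2$ is a Riesz isomorphism, so \cref{thm:extend-riesz-iso} gives a Riesz isomorphism $\Psi \colon \tilde{E}_1^u \to \tilde{E}_2^u$ extending it.

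Next I show $\Psi[\tilde{E}_1] = \tilde{E}_2$. Let $f \in (\tilde{E}_1)_+$ and put $A \coloneqq \Dset{g \in E_+}{i_1(g) \leq f}$, which is upward directed. Since $i_1[E]$ is order dense in the ideal it generates and $\tilde{T}_1$ is order continuous, $i_1[T[A]] \uparrow \tilde{T}_1 f$. Then $\Psi$ maps $i_1[A]$ onto $i_2[A]$, and $i_2[T[A]] = \Psi(i_1[T[A]])$ is bounded in $\tilde{E}_2^u$ by $\Psi(\tilde{T}_1 f)$. By $\tilde{T}_2$-universal completeness, $\sup i_2[A]$ exists in $\tilde{E}_2$. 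Because $\Psi$ is order continuous, this supremum equals $\Psi(f)$, so $\Psi(f) \in \tilde{E}_2$. By symmetry $\Psi[\tilde{E}_1] = \tilde{E}_2$, and $\Phi \coloneqq \Psi|_{\tilde{E}_1}$ is the required isomorphism with $i_2 = \Phi \circ i_1$.

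For the commuting diagram, $\Phi \circ \tilde{T}_1 = \tilde{T}_2 \circ \Phi$ holds on $i_1[E]$ by property (iv) of a $T$-universal completion. It extends to $\tilde{E}_1$ because both sides are order continuous and every positive element is a directed supremum of elements of $i_1[E]_+$.
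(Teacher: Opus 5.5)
Your uniqueness argument is the paper's. You extend $i_2\circ i_1^{-1}$ to a Riesz isomorphism $\Psi$ of the universal completions by \cref{thm:extend-riesz-iso}. For $f\in(\tilde E_1)_+$ you then apply $\tilde T_2$-universal completeness to $\Dset{i_2(z)}{z\in E_+,\ i_1(z)\leq f}$, whose $\tilde T_2$-images are dominated by $\Psi(\tilde T_1 f)$, to get $\Psi(f)\in\tilde E_2$; the reverse inclusion follows by symmetry. Your closing order-continuity argument for $\Phi\circ\tilde T_1=\tilde T_2\circ\Phi$ is correct, and it supplies a step the paper leaves under ``in particular''.

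For existence, the paper gives no proof and only cites \cite[Section 5]{Kuo-Labuschagne-Watson-2005}. Your steps (1)--(3) are plausible. Step (4), however, rests on the lemma you flag and do not prove: an upward directed $A\subseteq\tilde E_+$ with $\tilde T[A]\leq u$ for some $u\in E^u$ is bounded above in $E^u$.

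Your first plan cannot produce this, because $T$-images do not control the size of elements. Take $Tf=(\int_0^1 f)\bfone$ on $L^1[0,1]$ and $A=\set{f}$ with $f\geq 0$ integrable but unbounded. Then $\sup T[A]$ is a constant, and every product of it with components or partial inverses of elements of $R(T)$ is bounded, so none dominates $f$.

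Your fallback, the disjoint-pieces route, does work:
\begin{enumerate}[label=\textrm{(\arabic*)}]
    \item Put $g_n\coloneqq\sup_{a\in A}(a\wedge ne)\in\tilde E$ and $p_n\coloneqq P_{(g_n-(n-1)e)^+}(e)$. Then $g_n\geq(n-1)p_n$, so $(n-1)\tilde Tp_n\leq\tilde Tg_n\leq u$ by order continuity.
    \item The $p_n$ decrease because $g_{n+1}\wedge ne=g_n$. Hence $p\coloneqq\inf_n p_n$ satisfies $(n-1)\tilde Tp\leq u$ for all $n$, so $\tilde Tp=0$, and $p=0$ by strict positivity.
    \item Since $P_{e-p_{n+1}}g_{n+1}\leq n(e-p_{n+1})$ and $a\wedge(n+1)e\leq g_{n+1}$, you get $P_{e-p_{n+1}}a\leq n(e-p_{n+1})$ for all $a\in A$.
    \item Because $p=0$, the components $e-p_1$ and $p_n-p_{n+1}$ are pairwise disjoint with supremum $e$. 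So $\sup_n n(p_n-p_{n+1})$, which exists by lateral completeness of $E^u$, dominates $A$.
\end{enumerate}
With this lemma, or simply with the citation as in the paper, your proof is complete.
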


\begin{proof}
    \ref{it:thm:T-universal-completion:existence}
    This is shown in \cite[Section 5]{Kuo-Labuschagne-Watson-2005}.

    \smallskip

    \ref{it:thm:T-universal-completion:unique}
    Let $\tilde{E}_1^u$ and $\tilde{E}_2^u$ be the respective universal completions of $\tilde{E}_1$ and $\tilde{E}_2$ with their respective embeddings $j_1 \colon \tilde{E}_1 \to \tilde{E}_1^u$ and $j_2 \colon \tilde{E}_2 \to \tilde{E}_2^u$.
    Since $i_1[E]$ is an order dense Riesz subspace of $\tilde{E}_1$ and $j_1[\tilde{E}_1]$ is an order dense Riesz subspace of $\tilde{E}_1^u$, it follows that $j_1[i_1[E]]$ is an order dense Riesz subspace of $\tilde{E}_1^u$.
    Likewise, $j_2[i_2[E]]$ is an order dense Riesz subspace of $\tilde{E}_2^u$.
    Since $j_2 \circ i_2 \circ i_1^{-1} \circ j_1^{-1} \colon j_1[i_1[E]] \to j_2[i_2[E]]$ is a Riesz isomorphism, there exists by \cref{thm:extend-riesz-iso} a Riesz isomorphism $\Psi \colon \tilde{E}_1^u \to \tilde{E}_2^u$ that extends $j_2 \circ i_2 \circ i_1^{-1} \circ j_1^{-1}$.
    In particular, we then have $\Psi \circ j_1 \circ i_1 = j_2 \circ i_2$.

    We claim that $\Psi[j_1[\tilde{E}_1]] = j_2[\tilde{E}_2]$. 
    This will then imply that $\Phi \coloneqq j_2^{-1} \circ \Psi \circ j_1 \colon \tilde{E}_1 \to \tilde{E}_2$ is a Riesz isomorphism satisfying
    \[
        \Phi \circ i_1
        = j_2^{-1} \circ \Psi \circ j_1 \circ i_1
        = j_2^{-1} \circ j_2 \circ i_2
        = i_2,
    \]
    which will complete the proof.

    ($\subseteq$)
    Let $x \in \tilde{E}_1$.
    Since $i_1[E]$ is order dense in $\tilde{E}_1$, $i_1$ is an order continuous Riesz homomorphism, thus 
    \begin{align*}
        \Psi(j_1(x))
        & = \Psi\left( j_1 \left( \sup\Dset{i_1(z)}{z \in E_+, i_1(z) \leq x} \right) \right) \\
        & = \sup\Dset{\Psi(j_1(i_1(z)))}{z \in E_+, i_1(z) \leq x} \\
        & = \sup\Dset{j_2(i_2(z))}{z \in E_+, i_1(z) \leq x}.
    \end{align*}
    For every $z \in E_+$ with $i_1(z) \leq x$, we have
    \[
        j_2(\tilde{T}_2(i_2(z)))
        = j_2(i_2(T(z)))
        = \Psi(j_1(i_1(T(z))))
        = \Psi(j_1(\tilde{T}_1(i_1(z))))
        \leq \Psi(j_1(\tilde{T}_1(x))).
    \]
    Hence, the set $j_2[\tilde{T}_2[\dset{i_2(z)}{z \in E_+,i_1(z) \leq x}]]$ is bounded from above in $\tilde{E}_2^u$.
    Since $\tilde{E}_2$ is $\tilde{T}_2$-universally complete, we conclude that
    \[
        \sup\Dset{i_2(z)}{z \in E_+, i_1(z) \leq x}
    \]
    exists in $\tilde{E}_2$.
    Hence,
    \begin{align*}
        \Psi(j_1(x))
        & = \sup\Dset{j_2(i_2(z))}{z \in E_+, i_1(z) \leq x} \\
        & = j_2\left( \sup\Dset{i_2(z)}{z \in E_+,i_1(z) \leq x} \right)
        \in j_2[\tilde{E}_2].
    \end{align*}

    ($\supseteq$)
    This inclusion can be shown similarly.
\end{proof}

In the sequel, we denote the unique (up to Riesz isomorphism) $T$-universal completion of $(E,e,T)$ by $L^1(T)$.
If $(E,e,T)$ is a $T$-universally complete conditional Riesz triple, then $R(T)$ is universally complete and, thus, has the structure of an $f$-algebra.
Hereby, $e$ is both an algebraic and a weak order unit.
Occasionally, we write $f \cdot g$ instead of $fg$ for $f,g \in R(T)$ if it provides clarity.
$E = L^1(T)$ is an $R(T)$-module, cf. \cite[Theorem 2.2]{Kuo-Rogans-Watson-2017}.
Moreover, $T$ is an \emph{averaging operator}, i.e., $T(gf) = gT(f)$ for all $g \in R(T)$ and $f \in E$, cf. \cite[Theorem 4.3]{Kuo-Labuschagne-Watson-2005}.

Using the $T$-universal completion, the spaces $L^p(T)$ can be defined as described in \cref{sec:intro}.
In the remainder of this section, we discuss properties of these spaces.
$L^\infty(T)$ is an $R(T)$-module, cf. \cite[Theorem 2.2]{Kuo-Rogans-Watson-2017}, and both a Riesz subspace and a subalgebra of $E^u$.
In particular, $L^\infty(T)$ is an $f$-algebra, where $e$ is both an algebraic and a weak order unit.
Moreover, $L^p(T)$ is an order ideal of $L^1(T)$, cf. \cite[Theorem 3.2]{Azouzi-Trabelsi-2017} and an $L^\infty(T)$-module, cf. \cite[Proposition 3.12]{Azouzi-Trabelsi-2017}.
For $p \in [1,\infty)$, an $R(T)$-valued norm is defined on $L^p(T)$ via
\[
    \norm{f}_{T,p} \coloneqq T\left( \abs{f}^p \right)^{\tfrac{1}{p}} \qquad \text{for } f \in L^p(T).
\]
For a proof for the triangle inequality, we refer to \cite[Theorem 3.4]{Azouzi-Trabelsi-2017}.
The remaining norm axioms are straightforward.
For the case $p = \infty$, an $R(T)$-valued norm is defined on $L^\infty(T)$ via
\[
    \norm{f}_{T,\infty} \coloneqq \inf\Dset{\alpha \in R(T)_+}{\abs{f} \leq \alpha}
    \qquad \text{for } f \in L^\infty(T),
\]
cf. \cite[Theorem 3.2]{Kuo-Rogans-Watson-2017}.
Note that, for all $p \in [1,\infty]$, the $R(T)$-valued norm $\norm{\cdot}_{T,p}$ is monotone in the sense that the inequality $\abs{f} \leq \abs{g}$ implies $\norm{f}_{T,p} \leq \norm{g}_{T,p}$ for all $f,g \in L^p(T)$.
Finally, $R(T)$ can also be endowed with an $R(T)$-valued norm via
\[
    \norm{f}_{R(T)} \coloneqq \abs{f} \qquad \text{for } f \in R(T).
\]

The following proposition is known for the case $p = 1$ and $p = 2$, cf. \cite[Lemma 3.3(iii)]{Kalauch-Kuo-Watson-2024b}.
We will only be concerned with the case $p \in \nat$.

\begin{proposition} \label{thm:p-norm-o-continuous}
    Let $(E,e,T)$ be a $T$-universally complete conditional Riesz triple and let $p \in \nat$.
    Then $\norm{\cdot}_{T,p}$ is order continuous.
\end{proposition}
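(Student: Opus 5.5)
To prove that $\norm{\cdot}_{T,p}$ is order continuous, I will show that for every net $(f_\alpha)$ in $L^p(T)$ with $f_\alpha \to f$ in order, one has $\norm{f_\alpha - f}_{T,p} \to 0$ in order in $R(T)$.

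\textbf{Reduction to a decreasing net.} Order convergence $f_\alpha \to f$ gives a net $(g_\alpha)$ with $g_\alpha \downarrow 0$ and $\abs{f_\alpha - f} \leq g_\alpha$, at least eventually. Since $L^p(T)$ is an ideal, I may take $g_\alpha \in L^p(T)_+$. The norm is monotone, so $\norm{f_\alpha - f}_{T,p} \leq \norm{g_\alpha}_{T,p}$. It therefore suffices to prove: if $g_\alpha \downarrow 0$ in $L^p(T)_+$, then $T(g_\alpha^p)^{1/p} \downarrow 0$.

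\textbf{Passing $g_\alpha^p \downarrow 0$ through $T$.} Fix an index $\alpha_0$ and restrict to $\alpha \geq \alpha_0$, writing $g \coloneqq g_{\alpha_0}$. Since $p \in \nat$, the power $g_\alpha^p$ can be computed in the $f$-algebra $E^u$. For $\alpha \geq \alpha_0$ this gives
\[
    g_\alpha^p \leq g^{p-1} g_\alpha,
\]
where $g^{p-1} g_\alpha \in L^1(T)$ because $g_\alpha^p \leq g^p \in L^1(T)$. Multiplication by the fixed positive element $g^{p-1}$ is order continuous on $E^u$; it is a positive orthomorphism. Hence $g^{p-1} g_\alpha \downarrow 0$ in $E^u$, and since $L^1(T)$ is an ideal of $E^u$, this limit is also taken in $L^1(T)$. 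Thus $g_\alpha^p \downarrow 0$ in $L^1(T)$. Alternatively, one can argue directly that $x \mapsto x^p$ is order continuous on $E^u$, for instance by viewing $E^u$ as $C^\infty(K)$.

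Since $T$ is order continuous on $L^1(T)$, we get $h_\alpha \coloneqq T(g_\alpha^p) \downarrow 0$ in $R(T)$. This downward convergence also holds in $E$, because $R(T)$ is order closed.

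\textbf{The $p$-th root.} It remains to show that $h_\alpha^{1/p} \downarrow 0$ in $R(T)$. The function $t \mapsto t^{1/p}$ is concave but not positively homogeneous, so \cref{thm:order-continuity-functional-rep} does not apply directly. This is the main obstacle. I would handle it with an elementary estimate in the $f$-algebra $R(T)$, which has unit $e$. For $\eps > 0$, let $Q_\eps$ be the band projection onto the band generated by $(h - \eps^p e)^+$. Then
\[
    h^{1/p} \leq \eps e + \eps^{1-p} h.
\]
Indeed, off the band of $Q_\eps$ one has $h \leq \eps^p e$, hence $h^{1/p} \leq \eps e$. On that band, $h^{1/p} = h\, h^{1/p - 1} \leq \eps^{1-p} h$. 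Alternatively, this is the scalar inequality $t^{1/p} \leq \eps + \eps^{1-p} t$, which holds for all $t \geq 0$ and transfers through the functional calculus of \cite{Buskes-daPagter-vanRooij-1991}.

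Applying this to $h_\alpha$ gives
\[
    \inf_\alpha h_\alpha^{1/p} \leq \eps e + \eps^{1-p} \inf_\alpha h_\alpha = \eps e.
\]
Letting $\eps \to 0$ and using that $R(T)$ is Archimedean yields $\inf_\alpha h_\alpha^{1/p} = 0$. Monotonicity of $x \mapsto x^{1/p}$ ensures the net is decreasing, so $h_\alpha^{1/p} \downarrow 0$, which completes the argument.
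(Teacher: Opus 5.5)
Your proof is correct. It follows the same skeleton as the paper: $g_\alpha\downarrow 0 \Rightarrow g_\alpha^p\downarrow 0 \Rightarrow T(g_\alpha^p)\downarrow 0 \Rightarrow T(g_\alpha^p)^{1/p}\downarrow 0$. You justify the first and last steps differently.

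The paper calls $g_\alpha^p\downarrow 0$ immediate for $p\in\nat$ and gets the last step by citing \cref{thm:order-continuity-functional-rep}. You make the first step explicit with the orthomorphism bound $g_\alpha^p\le g^{p-1}g_\alpha$. For the last step you use the scalar estimate $t^{1/p}\le \eps+\eps^{1-p}t$, transferred through the functional calculus, together with the Archimedean property.

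Your remark that $t\mapsto t^{1/p}$ is not positively homogeneous is accurate. Read literally, the lemma does not cover it. It has to be read for the concave, positively homogeneous two-variable function $(s,t)\mapsto s^{1/p}t^{1-1/p}$ evaluated at $(h_\alpha,e)$, which is how $x^{1/p}$ arises in the Azouzi--Trabelsi framework.

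What each route buys:
\begin{itemize}
\item The paper's route is shorter and handles every concave homogeneous function at once.
\item Your route is self-contained and needs only monotonicity of the functional calculus and a one-line scalar inequality.
\item Your reduction from general order convergence to decreasing nets is harmless but not needed; the paper works with $x_\alpha\downarrow 0$ directly.
\end{itemize}

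One small slip. The membership $g^{p-1}g_\alpha\in L^1(T)$ follows from $g^{p-1}g_\alpha\le g^{p-1}g=g^p$ and the ideal property, not from $g_\alpha^p\le g^p$. In fact you do not need this membership at all. The bound $0\le g_\alpha^p\le g^{p-1}g_\alpha\downarrow 0$ in $E^u$ already forces $\inf_\alpha g_\alpha^p=0$ in the Riesz subspace $L^1(T)$.
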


\begin{proof}
    Let $(x_\alpha)_{\alpha \in A}$ be a net in $E_+$ with $x_\alpha \downarrow 0$.
    Since $p \in \nat$, we immediately obtain $x_\alpha^p \downarrow 0$ and by the order continuity of $T$, we have $T(x_\alpha^p) \downarrow 0$.
    Finally, it follows with the aid of \cref{thm:order-continuity-functional-rep} that $\norm{x_\alpha}_{T,p} = (T(x_\alpha^p))^{\frac{1}{p}} \downarrow 0$.
    Hence, $\norm{\cdot}_{T,p}$ is order continuous.
\end{proof}

We note that, as in the classical case, the norm $\norm{\cdot}_{T,\infty}$ is not order continuous, in general.
We will need the following Hölder-type inequality, cf. \cite[Theorem 3.7]{Azouzi-Trabelsi-2017} and \cite[Theorem 3.3]{Kuo-Rogans-Watson-2017}.

\begin{proposition} \label{thm:hoelder}
    Let $(E,e,T)$ be a $T$-universally complete conditional Riesz triple, $p,q \in [1,\infty]$ conjugate exponents, and $f \in L^p(T)$ and $g \in L^q(T)$.
    Then $fg \in L^1(T)$ and
    \[
        \norm{fg}_{T,1} \leq \norm{f}_{T,p} \norm{g}_{T,q}.
    \]
\end{proposition}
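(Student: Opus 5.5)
The proof splits by the conjugate pair. The endpoint case $p = 1$, $q = \infty$ (and symmetrically $p = \infty$, $q = 1$) is handled directly; the interior case $p,q \in (1,\infty)$ is reduced to a pointwise Young-type inequality via normalisation by partial inverses.

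\emph{Case $p=1$, $q=\infty$.} Put $\alpha \coloneqq \norm{g}_{T,\infty} \in R(T)_+$. Since $R(T)$ is Dedekind complete, the infimum defining $\alpha$ is attained, so $\abs{g} \leq \alpha$. Then $\abs{fg} = \abs{f}\abs{g} \leq \alpha\abs{f}$ in $E^u$. Here $\alpha\abs{f} \in L^1(T)$ because $L^1(T)$ is an $R(T)$-module. As $L^1(T)$ is an order ideal, $fg \in L^1(T)$. Since $T$ is positive and averaging,
\[
T\abs{fg} \leq T(\alpha\abs{f}) = \alpha\, T\abs{f} = \norm{f}_{T,1}\norm{g}_{T,\infty}.
\]

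\emph{Case $1<p,q<\infty$.} Set $a \coloneqq \norm{f}_{T,p}$ and $b \coloneqq \norm{g}_{T,q}$ in $R(T)_+$, with canonical partial inverses $a^{-1}, b^{-1} \in R(T)_+$. Use the pointwise Young inequality
\[
xy \leq \tfrac{1}{p}x^p + \tfrac{1}{q}y^q \qquad (x,y \in E^u_+).
\]
It holds via functional calculus, since $(s,t) \mapsto \frac1p s^p + \frac1q t^q - st$ is nonnegative on $\real_+^2$; equivalently, it can be checked in a representation of the principal band as continuous extended functions. Apply it to $x = a^{-1}\abs{f}$ and $y = b^{-1}\abs{g}$ to get
\[
a^{-1}b^{-1}\abs{fg} \leq \tfrac1p a^{-p}\abs{f}^p + \tfrac1q b^{-q}\abs{g}^q .
\]
The right-hand side lies in $L^1(T)$, again by the module property.

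Next we remove the normalisation. Lemma~\ref{thm:proj-ineq} gives $P_{\abs{f}^p} \leq P_{T\abs{f}^p} = P_a$, so $P_a\abs{f} = \abs{f}$; likewise $P_b\abs{g} = \abs{g}$. Hence $\abs{fg} = ab \cdot a^{-1}b^{-1}\abs{fg}$. Multiplying the inequality by $ab$ yields a bound of $\abs{fg}$ by an element of $L^1(T)$, so $fg \in L^1(T)$. Applying $T$ and using the averaging property gives
\[
a^{-1}b^{-1}T\abs{fg} \leq \tfrac1p a^{-p}a^p + \tfrac1q b^{-q}b^q = \tfrac1p P_a e + \tfrac1q P_b e \leq e .
\]
Multiplying by $ab$ and using $P_{ab}T\abs{fg} = T\abs{fg}$ (again by Lemma~\ref{thm:proj-ineq}) gives $T\abs{fg} \leq ab$, which is the claim.

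\emph{Expected obstacle.} The main difficulty is justifying the products used above. The elements $\abs{f}$ and $a^{-1}\abs{f}$ live in $E^u$, whose $f$-algebra structure must be compatible with the $R(T)$-module structure of $L^1(T)$. Multiplication by elements of $R(T)$ must preserve order. The functional-calculus powers must agree with $f$-algebra powers, i.e.\ $(a^{-1}\abs{f})^p = a^{-p}\abs{f}^p$. I would handle all of these by working in $E^u$ with $e$ as algebraic unit, where such identities hold for positive elements and positive multipliers. Bookkeeping the band projections $P_a$ and $P_b$ is routine by comparison.
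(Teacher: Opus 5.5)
The paper does not prove this proposition. It quotes the case $p=1$, $q=\infty$ from \cite[Theorem 3.3]{Kuo-Rogans-Watson-2017} and the case $1<p<\infty$ from \cite[Theorem 3.7]{Azouzi-Trabelsi-2017}, so your argument is a self-contained substitute, and it is correct in substance. Your endpoint case is the standard argument: bound $\abs{fg}\le\norm{g}_{T,\infty}\abs{f}$, then use averaging. In the interior case you normalise with canonical partial inverses and undo the normalisation with \cref{thm:proj-ineq}; a common alternative is to perturb to the invertible elements $a+\eps e$, $b+\eps e$ and let $\eps\downarrow 0$. Your route buys an argument that uses only tools already set up in the paper (the ideal property of $L^1(T)$ in $E^u$, the module and averaging properties, partial inverses). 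The cost is the calculus/multiplication compatibility bookkeeping you flag.

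Three justifications need repair.

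(i) The claim $P_{ab}T\abs{fg}=T\abs{fg}$ does not follow from \cref{thm:proj-ineq}. That lemma gives $P_{\abs{fg}}\le P_{T\abs{fg}}$, which is the opposite inclusion. What you need is $\abs{fg}\in B_{ab}$, which you did obtain from the lemma, together with averaging: $P_{ab}e\in R(T)$ by \cref{thm:projections-E-vs-R(T)}, so $T\abs{fg}=T(P_{ab}e\,\abs{fg})=P_{ab}e\,T\abs{fg}$. Simpler still, apply $T$ directly to $\abs{fg}\le ab\bigl(\tfrac1p a^{-p}\abs{f}^p+\tfrac1q b^{-q}\abs{g}^q\bigr)$. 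This gives $T\abs{fg}\le ab\bigl(\tfrac1p P_ae+\tfrac1q P_be\bigr)=ab$, since $ab\,P_ae=ab=ab\,P_be$.

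(ii) The Buskes--de Pagter--van Rooij calculus quoted in the paper handles positively homogeneous functions, and $(s,t)\mapsto\tfrac1ps^p+\tfrac1qt^q-st$ is not one. So either rely on the representation $E^u\cong C^\infty(Q)$, where products and powers are pointwise on a dense set, or use the homogeneous form $s^{1/p}t^{1/q}\le\tfrac sp+\tfrac tq$ applied to $a^{-p}\abs{f}^p$ and $b^{-q}\abs{g}^q$. Also, $fg\in L^1(T)$ already follows from the unnormalised inequality $\abs{fg}\le\tfrac1p\abs{f}^p+\tfrac1q\abs{g}^q$; multiplying back by $ab$ is unnecessary for that.

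(iii) The inequality $\abs{g}\le\norm{g}_{T,\infty}$ needs more than Dedekind completeness of $R(T)$. It uses that $R(T)$ is order closed in $E$. Then the infimum in $R(T)$ of the downward directed set $\dset{\alpha\in R(T)_+}{\abs{g}\le\alpha}$ coincides with its infimum in $E$, and that infimum dominates $\abs{g}$.
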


The following observation on the norm of components is needed later on.

\begin{proposition} \label{thm:inf-norm-of-component}
    Let $(E,e,T)$ be a $T$-universally complete conditional Riesz triple and let $p \in E_+$ be a component of $e$.
    Then $\norm{p}_{T,\infty} = P_{Tp}(e)$.
\end{proposition}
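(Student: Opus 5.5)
We prove the two inequalities $\norm{p}_{T,\infty} \leq P_{Tp}(e)$ and $P_{Tp}(e) \leq \norm{p}_{T,\infty}$ separately. Throughout, $P_{Tp}(e) \in R(T)_+$ by \cref{thm:projections-E-vs-R(T)}.

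For the upper bound, we show that $\alpha \coloneqq P_{Tp}(e)$ dominates $p$. By \cref{thm:proj-ineq} we have $P_p \leq P_{Tp}$. Since $p$ is a component of $e$, we have $p = P_p(e)$. Hence
\[
    p = P_p(e) = P_p P_{Tp}(e) \leq P_{Tp}(e).
\]
Thus $P_{Tp}(e)$ belongs to the set $\Dset{\alpha \in R(T)_+}{\abs{p} \leq \alpha}$, so $\norm{p}_{T,\infty} \leq P_{Tp}(e)$.

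For the lower bound, let $\alpha \in R(T)_+$ with $p \leq \alpha$. The key step is to show that $P_{Tp}(e) \leq \alpha$. One route is to note that $\beta \coloneqq \alpha \wedge e \in R(T)_+$, since $R(T)$ is a Riesz subspace containing $e$, and $p \leq \beta \leq e$ because $p \leq e$. Now put $q \coloneqq e - \beta \in R(T)_+$. Since $0 \leq q \leq e - p$, the element $q$ is disjoint from $p$. Moreover $Tq = q$, and $q \wedge p = 0$.

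We then want to conclude that $q$ is disjoint from $Tp$. As in the proof of \cref{thm:proj-ineq}, we use that $T$ is averaging, working in $E^u$ or with the $f$-algebra structure of $L^1(T)$:
\[
    q \cdot Tp = T(q p) = T(0) = 0,
\]
using $qp = 0$ for disjoint elements. Hence $q \perp Tp$, so $P_{Tp}(q) = 0$. Therefore
\[
    P_{Tp}(e) = P_{Tp}(\beta) + P_{Tp}(q) = P_{Tp}(\beta) \leq \beta \leq \alpha.
\]
Taking the infimum over all such $\alpha$ gives $P_{Tp}(e) \leq \norm{p}_{T,\infty}$.

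The main obstacle is justifying $q \cdot Tp = T(qp)$, which requires the averaging property for $q \in R(T)$ and $p \in L^1(T)$, and justifying that disjointness is equivalent to a zero product. Both facts were recalled in the paper: the averaging property from \cite{Kuo-Labuschagne-Watson-2005}, and disjointness iff $fg = 0$ in $E^u$. If this step causes trouble, an alternative avoids products. From $q \wedge p = 0$ we get $0 \leq T(q \wedge p) \leq Tq \wedge Tp = q \wedge Tp$, but this only bounds in the wrong direction. So the product argument is the preferred route.
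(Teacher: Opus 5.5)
Your proof is correct. The upper bound $\norm{p}_{T,\infty} \leq P_{Tp}(e)$ is the same as the paper's argument via \cref{thm:proj-ineq}. Your lower bound, however, takes a genuinely different route.

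The paper first shows that $\norm{p}_{T,\infty}$ is itself a component of $e$. It uses $0 \leq p \leq \norm{p}_{T,\infty} \leq e$ together with the $f$-algebra structure of $R(T)$ to get $\norm{p}_{T,\infty}^2 = \norm{p}_{T,\infty}$. It then concludes from $0 \leq Tp \leq T\norm{p}_{T,\infty} = \norm{p}_{T,\infty}$ that $P_{Tp}(e) \leq P_{\norm{p}_{T,\infty}}(e) = \norm{p}_{T,\infty}$.

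You instead test against an arbitrary upper bound $\alpha \in R(T)_+$ of $p$ and truncate it to $\beta = \alpha \wedge e$. You then use the averaging property to show that $e - \beta$ is disjoint from $Tp$. In effect this is the converse of the disjointness transfer in \cref{thm:proj-ineq}: for $q \in R(T)_+$, $q \perp p$ implies $q \perp Tp$, and this direction needs no strict positivity.

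What each approach buys:
\begin{itemize}
\item Your route does not need the infimum to be attained, i.e.\ that $p \leq \norm{p}_{T,\infty}$ and $\norm{p}_{T,\infty} \in R(T)$, and it avoids the squaring trick. The price is invoking the averaging property and the equivalence of disjointness with vanishing products in $E^u$.
\item The paper's route yields along the way that $\norm{p}_{T,\infty}$ is a component of $e$. That fact also follows a posteriori from your identity.
\end{itemize}

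You correctly dismiss the attempted alternative via $T(q \wedge p) \leq q \wedge Tp$, so that closing remark can simply be dropped.
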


\begin{proof}
    First, we show that $\norm{p}_{T,\infty}$ is a component of $e$.
    As $e \in R(T)_+$ and $0 \leq p \leq e$, we have $0 \leq \norm{p}_{T,\infty} \leq e$, thus $\norm{p}_{T,\infty}^2 \leq \norm{p}_{T,\infty}$.
    Conversely, since $0 \leq p \leq \norm{p}_{T,\infty}$, we obtain $p = p^2 \leq \norm{p}_{T,\infty}^2$, which implies $\norm{p}_{T,\infty} \leq \norm{p}_{T,\infty}^2$.
    Hence, $\norm{p}_{T,\infty} = \norm{p}_{T,\infty}^2$.
    This gives $\norm{p}_{T,\infty} (e - \norm{p}_{T,\infty}) = 0$, which is equivalent to $\norm{p}_{T,\infty} \wedge (e - \norm{p}_{T,\infty}) = 0$.
    Hence, $\norm{p}_{T,\infty}$ is a component of $e$.

    Next, we show that $\norm{p}_{T,\infty} = P_{Tp}(e)$.
    Since $0 \leq Tp \leq T\norm{p}_{T,\infty} = \norm{p}_{T,\infty}$, we have $P_{Tp}(e) \leq P_{\norm{p}_{T,\infty}}(e)$.
    As $\norm{p}_{T,\infty}$ is a component of $e$, we get $P_{\norm{p}_{T,\infty}}(e) = \norm{p}_{T,\infty}$, thus $P_{Tp}(e) \leq \norm{p}_{T,\infty}$.
    By \cref{thm:proj-ineq}, we have $p = P_p(e) \leq P_{Tp}(e) \in R(T)_+$, hence $\norm{p}_{T,\infty} \leq P_{Tp}(e)$.
    We conclude that $\norm{p}_{T,\infty} = P_{Tp}(e)$.
\end{proof}

For the subsequent technical lemma, recall that $L^\infty(T)$ is an $f$-algebra and note that band projections $P_\alpha \colon L^\infty(T) \to L^\infty(T)$ for $\alpha \in L^\infty(T)$ are algebra homomorphisms, cf. \cite[Corollary 5.5]{Huijsmans-dePagter-1984}. 

\begin{lemma} \label{thm:prod-w/-band-proj-invariant}
    Let $(E,e,T)$ be a $T$-universally complete conditional Riesz triple and let $f,\alpha \in L^\infty(T)_+$ be such that there exists $g \in L^\infty(T)_+$ with $f \leq \alpha g$.
    Then $P_{\alpha}(e) f = f$.
\end{lemma}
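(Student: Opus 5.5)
The plan is to show that $f$ lies in the principal band $B_\alpha$ generated by $\alpha$, and then to use the multiplicative description of band projections in the $f$-algebra $L^\infty(T)$. All computations take place in $L^\infty(T)$, viewed as a Riesz subspace and subalgebra of $E^u$. There, band projections onto principal bands are given by multiplication with the corresponding component of $e$: $P_\alpha(h) = P_\alpha(e)\, h$ for all $h$. This identity holds because $P_\alpha$ is an algebra homomorphism (as recalled before the statement, \cite[Corollary 5.5]{Huijsmans-dePagter-1984}) and $e$ is the algebraic unit. Indeed, we can also see it directly: writing $h = P_\alpha(h) + (h - P_\alpha(h))$, the product $P_\alpha(e)(h - P_\alpha(h))$ vanishes because its factors are disjoint, and disjoint elements of $E^u$ have product zero. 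Likewise $(e - P_\alpha(e)) P_\alpha(h) = 0$. Consequently, the claim $P_\alpha(e) f = f$ is equivalent to $P_\alpha(f) = f$, i.e., to $f \in B_\alpha$.

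To show $f \in B_\alpha$, it suffices to show that $f$ is disjoint from every element of $B_\alpha^\rmd$. By positivity of $f$, it suffices to show $f \wedge (e - P_\alpha(e)) = 0$, since $e - P_\alpha(e)$ is a weak order unit of $B_\alpha^\rmd$. Set $q \coloneqq e - P_\alpha(e)$. Then $q \wedge \alpha = 0$, and hence $q\alpha = 0$ in $E^u$. Multiplication by the positive element $q$ preserves order in the $f$-algebra, so
\[
    0 \leq q f \leq q \alpha g = 0,
\]
which gives $qf = 0$. Since $q, f \geq 0$ and the product vanishes, $q \wedge f = 0$ by the characterization of disjointness via products in $E^u$. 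Hence $f \in B_\alpha$.

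Alternatively, we can conclude directly: $f = P_\alpha(e) f + q f = P_\alpha(e) f$. This bypasses the band argument entirely, using only $e = P_\alpha(e) + q$ and the computation $qf = 0$ above. This is the route I would write up, since it is essentially one line once $q\alpha = 0$ is in hand.

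The only point requiring care, and hence the main obstacle, is justifying that $q\alpha = 0$ and that multiplication by $q \geq 0$ is monotone. The first follows from $q \perp \alpha$, since $q \in B_\alpha^\rmd$ while $\alpha \in B_\alpha$, together with the cited fact that disjoint elements of $E^u$ have zero product. The second is a standard property of $f$-algebras: products of positive elements are positive. Both are available from the preliminaries, so no real difficulty is expected.
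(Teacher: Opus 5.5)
Your proof is correct. The route you chose to write up differs slightly from the paper's.

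The paper argues via bands. From $f \leq \alpha g$ it gets $f \in B_{\alpha g} \subseteq B_\alpha$. It then uses that $P_\alpha$ is an algebra homomorphism with $e$ as unit, giving $P_\alpha(e) f = P_\alpha(e) P_\alpha(f) = P_\alpha(ef) = P_\alpha(f) = f$. This is essentially the band route sketched in your first two paragraphs.

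Your final argument works instead with the complementary component $q = e - P_\alpha(e)$. It obtains $0 \leq qf \leq (q\alpha)g = 0$, hence $f = ef = P_\alpha(e)f + qf = P_\alpha(e)f$. This uses only three facts:
\begin{itemize}
\item $e$ is an algebraic unit;
\item disjoint elements of $E^u$ have zero product;
\item products of positive elements are positive.
\end{itemize}
It avoids both the multiplicativity of band projections and the inclusion $B_{\alpha g} \subseteq B_\alpha$. That inclusion itself rests on the $f$-algebra property that $\abs{h} \wedge \alpha = 0$ implies $\abs{h} \wedge \alpha g = 0$.

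So your version is more elementary and self-contained. The paper's is a one-liner, but only because it leans on those cited structural facts.
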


\begin{proof}
    Since $f \in B_{\alpha g} \subseteq B_\alpha$, we have $P_{\alpha}(e) f = P_{\alpha}(e) P_{\alpha}(f) = P_{\alpha}(f) = f$.
\end{proof}

\section{\texorpdfstring{$R(T)$}{R(T)}-valued charges on components} \label{sec:charges}

We introduce $R(T)$-valued charges on components of the weak order unit.
In \cref{sec:integration}, we develop an integration theory with respect to such charges, which only makes sense if a notion of absolute continuity is assumed.
First, we investigate basic properties of charges.
$T$-absolutely continuous charges will be introduced in \cref{def:T-absolutely-continuous} below.

Given a Riesz space $E$ and a weak order unit $e$, we denote by $C_e$ the set of all components of $e$.
The following standard example motivates the subsequent definition of charges on $C_e$.

\begin{example} \label{ex:charge-motivation}
    Let $(\Omega,\calA)$ be a measurable space and consider the Riesz space $E = \calL^0(\Omega,\calA)$ of all $\calA$-measurable functions with the order defined pointwise everywhere.
    The constant function $\bfone \colon \omega \mapsto 1$ is a weak order unit of $E$ and the components of $\bfone$ are precisely the characteristic functions of measurable sets, i.e.,
    \[
        \bfone_A \colon \Omega \to \real, \quad \omega \mapsto
        \begin{cases}
            1 & \text{if } \omega \in A, \\
            0 & \text{if } \omega \not\in A,
        \end{cases}
    \]
    for $A \in \calA$.
    In particular, the map
    \[
        \calA \to C_\bfone, \quad A \mapsto \bfone_A,
    \]
    is a bijection.
    Note that two sets $A,B \in \calA$ are disjoint if and only if $\bfone_A \wedge \bfone_B = 0$, i.e., if and only if the components $\bfone_A$ and $\bfone_B$ are disjoint in the Riesz space $E$.
\end{example}

\begin{definition}
    Let $E$ and $F$ be Riesz spaces and $e \in E_+$ a weak order unit of $E$.
    A \emph{signed $F$-valued charge} on $C_e$ is a map $\mu \colon C_e \to F$ with the following properties:
    \begin{enumItem}
        \item $\mu(0) = 0$.
        \item For all $p,q \in C_e$ with $p \wedge q = 0$, one has $\mu(p + q) = \mu(p) + \mu(q)$.
    \end{enumItem}
    An \emph{$F$-valued charge} on $C_e$ is an $F$-valued signed charge $\mu$ on $C_e$ with $\mu(p) \geq 0$ for all $p \in C_e$.
    An $F$-valued signed charge $\mu$ on $C_e$ is called \emph{order bounded} if there exists $g \in F_+$ such that $\abs{\mu(p)} \leq g$ for all $p \in C_e$.
    We denote by $\ba(C_e,F)$ the set of all order bounded $F$-valued signed charges on $C_e$ and by $\ba_+(C_e,F)$ the set of all order bounded $F$-valued charges on $C_e$.
\end{definition}

In the measure-theoretic setting, the lattice structure of the space of all charges is well-known, cf. \cite[Theorem 10.53]{Aliprantis-Border-2006}.
We provide a similar result for charges on components with values in a Dedekind complete Riesz space.

\begin{theorem} \label{thm:o-bounded-signed-char-riesz-space}
    Let $E$ be a Riesz space with weak order unit $e$ and $F$ a Dedekind complete Riesz space.
    Endow $\ba(C_e,F)$ with the partial order defined by
    \[
        \mu \leq \nu \quad\text{if}\quad \nu - \mu \in \ba_+(C_e,F).
    \]
    Then:
    \begin{enumList}
        \item \label{it:thm:o-bounded-signed-char-riesz-space:riesz-space}
        $\ba(C_e,F)$ is a Riesz space, where, for all $\mu,\nu \in \ba(C_e,F)$ and $p \in C_e$, we have
        \begin{align}
            \label{eq:thm:o-bounded-signed-char-riesz-space:sup}
            (\mu \vee \nu)(p) & = \sup\Dset{\mu(q) + \nu(p - q)}{q \in C_e, q \leq p}, \\
            \label{eq:thm:o-bounded-signed-char-riesz-space:inf}
            (\mu \wedge \nu)(p) & = \inf\Dset{\mu(q) + \nu(p - q)}{q \in C_e, q \leq p}, \\
            \label{eq:thm:o-bounded-signed-char-riesz-space:mod}
            \abs{\mu}(p) & = \sup\Dset{\mu(q) - \mu(p - q)}{q \in C_e, q \leq p}, \\
            \label{eq:thm:o-bounded-signed-char-riesz-space:pos}
            \mu^+(p) & = \sup\Dset{\mu(q)}{q \in C_e, q \leq p}, \\
            \label{eq:thm:o-bounded-signed-char-riesz-space:neg}
            \mu^-(p) & = -\inf\Dset{\mu(q)}{q \in C_e, q \leq p}.
        \end{align}

        \item \label{it:thm:o-bounded-signed-char-riesz-space:dedekind-complete}
        For every net $(\mu_\alpha)$ in $\ba(C_e,F)$ and $\mu \in \ba(C_e,F)$, we have $\mu_\alpha \uparrow \mu$ if and only if $\mu_\alpha(p) \uparrow \mu(p)$ for all $p \in C_e$.
        In particular, $\ba(C_e,F)$ is Dedekind complete.
    \end{enumList}
    \end{theorem}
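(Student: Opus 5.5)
The plan follows the classical proof for charges on algebras of sets. The Boolean algebra of components $C_e$ replaces the algebra of sets, and Dedekind completeness of $F$ replaces completeness of $\real$. We use that $C_e$ is a Boolean algebra under $\wedge,\vee$ with complement $q \mapsto e - q$. Moreover, for $q,r \in C_e$ with $q \wedge r = 0$ we have $q + r = q \vee r \in C_e$. For $q \leq p$ in $C_e$, $p - q \in C_e$ is disjoint from $q$.

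\emph{Step 1: the formula for $\mu^+$.} Let $\mu \in \ba(C_e,F)$ with bound $g$. Define $\lambda(p) \coloneqq \sup\dset{\mu(q)}{q \in C_e, q \leq p}$. The supremum exists in $F$ since the set is bounded by $g$, and $0 \leq \lambda(p) \leq g$ because $q = 0$ is admissible.

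To show $\lambda$ is additive, take $p_1 \wedge p_2 = 0$. Every $q \leq p_1 + p_2$ splits as $q = (q \wedge p_1) + (q \wedge p_2)$ with disjoint components. This gives $\lambda(p_1+p_2) \leq \lambda(p_1) + \lambda(p_2)$. Conversely, $q_1 \leq p_1$ and $q_2 \leq p_2$ give $q_1 + q_2 \leq p_1 + p_2$. Taking suprema separately, which is legitimate in Dedekind complete $F$, yields the reverse inequality.

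Thus $\lambda \in \ba_+(C_e,F)$ and $\lambda \geq \mu$. Now let $\nu$ be any upper bound of $\mu$ and $0$ in the given order, i.e.\ $\nu - \mu$ and $\nu$ are positive charges. Then for $q \leq p$ we get $\mu(q) \leq \nu(q) \leq \nu(q) + \nu(p-q) = \nu(p)$, so $\lambda \leq \nu$. Hence $\mu^+ = \mu \vee 0$ exists and equals \eqref{eq:thm:o-bounded-signed-char-riesz-space:pos}.

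Checking that the order is a vector-space order is routine. Antisymmetry holds since $\mu \leq \nu \leq \mu$ forces $\nu - \mu \geq 0$ and $\leq 0$ pointwise. Existence of $\mu \vee 0$ for every $\mu$ then makes $\ba(C_e,F)$ a Riesz space.

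\emph{Step 2: the remaining formulas.} We use $\mu \vee \nu = (\mu - \nu)^+ + \nu$. Evaluating at $p$ gives $\sup_{q \leq p}(\mu(q) - \nu(q)) + \nu(p)$. Since $\nu(p) - \nu(q) = \nu(p-q)$, this is \eqref{eq:thm:o-bounded-signed-char-riesz-space:sup}. The formula \eqref{eq:thm:o-bounded-signed-char-riesz-space:inf} follows from $\mu \wedge \nu = -((-\mu) \vee (-\nu))$, and \eqref{eq:thm:o-bounded-signed-char-riesz-space:neg} from $\mu^- = (-\mu)^+$. For \eqref{eq:thm:o-bounded-signed-char-riesz-space:mod}, apply $\abs{\mu} = \mu \vee (-\mu)$ to \eqref{eq:thm:o-bounded-signed-char-riesz-space:sup}.

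\emph{Step 3: part (b).} Suppose $\mu_\alpha \uparrow \mu$ in $\ba(C_e,F)$. Then $\mu_\alpha(p)$ is increasing and bounded by $\mu(p)$, so $\lambda(p) \coloneqq \sup_\alpha \mu_\alpha(p)$ exists in $F$. The main point is that $\lambda$ is again a bounded charge. Additivity holds because suprema of increasing nets are additive: $\sup_\alpha(a_\alpha + b_\alpha) = \sup a_\alpha + \sup b_\alpha$ for increasing nets. Order boundedness follows from $\mu_{\alpha_0} \leq \lambda \leq \mu$ pointwise. Then $\lambda$ is an upper bound of $(\mu_\alpha)$, since $\lambda - \mu_\alpha$ is a positive charge, and $\lambda \leq \mu$. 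Hence $\lambda = \mu$, i.e.\ $\mu_\alpha(p) \uparrow \mu(p)$.

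For the converse, suppose $\mu_\alpha(p) \uparrow \mu(p)$ for all $p$. Any upper bound $\nu$ of $(\mu_\alpha)$ satisfies $\nu(p) \geq \mu(p)$ pointwise, with $\nu - \mu = \sup_\alpha(\nu - \mu_\alpha)$ a positive charge. So $\nu \geq \mu$, and $\mu$ is the supremum.

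Dedekind completeness: given an upward directed set $D$ bounded above by $\nu$, the pointwise supremum $\lambda$ exists and is a charge by the same argument. It is order bounded, squeezed between a member of $D$ and $\nu$, and is the supremum by the converse direction. This extends to arbitrary bounded sets by passing to finite suprema, which form a directed set.

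The main obstacle is Step 1's reverse inequality and the order-theoretic bookkeeping. Those steps rely on the component decomposition $q = (q \wedge p_1) + (q \wedge p_2)$, which needs distributivity of the Riesz space lattice. This is standard, since $q \leq p_1 \vee p_2$ and disjoint components satisfy $p_1 + p_2 = p_1 \vee p_2$.
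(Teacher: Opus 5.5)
Your proof is correct and is essentially the paper's argument: pointwise suprema over subcomponents $q \leq p$, additivity via the splitting $q = (q \wedge p_1) + (q \wedge p_2)$, and pointwise suprema of increasing nets for (b); you differ only in building $\mu^+$ first and recovering $\mu \vee \nu = (\mu - \nu)^+ + \nu$ (the paper verifies the formula for $\mu \vee \nu$ directly), and in spelling out both directions of (b) for signed nets, where the paper treats only bounded increasing nets in $\ba_+(C_e,F)$. One harmless slip: in the converse direction of Step 3, $\nu - \mu$ equals $\inf_\alpha(\nu - \mu_\alpha)$, not $\sup_\alpha(\nu - \mu_\alpha)$, but positivity of $\nu - \mu$ follows directly from the pointwise inequality anyway.
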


\begin{proof}
    \ref{it:thm:o-bounded-signed-char-riesz-space:riesz-space}
    Note that the given partial order is compatible with the vector space structure of $\ba(C_e,F)$.
    Let $\mu,\nu \in \ba(C_e,F)$.
    Since $F$ is Dedekind complete and $\mu$ and $\nu$ are order bounded, for each $p \in C_e$, the supremum
    \[
        \eta(p) \coloneqq \sup\Dset{\mu(q) + \nu(p - q)}{q \in C_e, q \leq p}
    \]
    exists in $F$.
    
    We show that $\eta \in \ba(C_e,F)$.
    Note that $\eta(0) = 0$.
    Let $p,q \in C_e$ be such that $p \wedge q = 0$.
    Let $x \in C_e$ be such that $x \leq p + q$ and consider $x_p \coloneqq x \wedge p \in C_e$ and $x_q \coloneqq x \wedge q \in C_e$.
    Then $x_p + x_q = x$, $x_p \wedge x_q = 0$, and $(p - x_p) \wedge (q - x_q) = 0$, thus
    \begin{align*}
        \mu(x) + \nu(p + q - x)
        & = \mu(x_p + x_q) + \nu(p - x_p + q - x_q) \\
        & = \mu(x_p) + \nu(p - x_p) + \mu(x_q) + \nu(q - x_q) \\
        & \leq \eta(p) + \eta(q).
    \end{align*}
    Taking the supremum over all $x \in C_e$ with $x \leq p + q$ yields $\eta(p + q) \leq \eta(p) + \eta(q)$.
    Conversely, let $y_p,y_q \in C_e$ be such that $y_p \leq p$ and $y_q \leq q$.
    Then $y_p \wedge y_q = 0$, $(p - y_q) \wedge (q - y_q) = 0$, and $y_p + y_q \leq p + q$, thus
    \begin{align*}
        \mu(y_p) + \nu(p - y_p) + \mu(y_q) + \nu(q - y_q)
        & = \mu(y_p + y_q) + \nu(p + q - (y_p + y_q)) \\
        & \leq \eta(p + q).
    \end{align*}
    Taking the supremum over all $y_p \in C_e$ with $y_p \leq p$ and then over all $y_q \in C_e$ with $y_q \leq q$ yields $\eta(p) + \eta(q) \leq \eta(p + q)$.
    We conclude that $\eta(p + q) = \eta(p) + \eta(q)$, which shows that $\eta$ is an $F$-valued signed charge on $C_e$.
    Finally, since $\mu$ and $\nu$ are order bounded, one readily checks that $\eta$ is also order bounded.
    We conclude that $\eta \in \ba(C_e,F)$.

    Next, we show that $\eta = \mu \vee \nu$.
    Note that $\mu \leq \eta$ and $\nu \leq \eta$.
    Let $\lambda \in \ba(C_e,F)$ be such that $\mu \leq \lambda$ and $\nu \leq \lambda$.
    Then, for all $p \in C_e$, we obtain
    \begin{align*}
        \eta(p)
        \leq \sup\Dset{\lambda(q) + \lambda(p - q)}{q \in C_e, q \leq p}
        = \lambda(p),
    \end{align*}
    thus $\eta \leq \lambda$.
    Hence, $\eta = \mu \vee \nu$, showing that $\ba(C_e,F)$ is a Riesz space and that \eqref{eq:thm:o-bounded-signed-char-riesz-space:sup} holds.
    Note that the equations \eqref{eq:thm:o-bounded-signed-char-riesz-space:inf}--\eqref{eq:thm:o-bounded-signed-char-riesz-space:neg} follow immediately from \eqref{eq:thm:o-bounded-signed-char-riesz-space:sup}.

    \smallskip

    \ref{it:thm:o-bounded-signed-char-riesz-space:dedekind-complete}
    Let $(\mu_\alpha)_{\alpha \in A}$ be an increasing net in $\ba_+(C_e,F)$ that is order bounded from above by some $\eta \in \ba_+(C_e,F)$.
    Since $F$ is Dedekind complete, the supremum $\mu(p) \coloneqq \sup_{\alpha \in A} \mu_\alpha(p)$ exists for all $p \in C_e$.
    We show that $\mu \in \ba(C_e,F)$.
    Note that $\mu(0) = 0$.
    Let $p,q \in C_e$ be such that $p \wedge q = 0$.
    The inequality $\mu(p + q) \leq \mu(p) + \mu(q)$ is straightforward.
    For all $\alpha,\beta \in A$, there exists $\gamma \in A$ with $\alpha \preceq \gamma$ und $\beta \preceq \gamma$ and it follows that
    \[
        \mu_\alpha(p) + \mu_\beta(q)
        \leq \mu_\gamma(p) + \mu_\gamma(q)
        = \mu_\gamma(p + q)
        \leq \mu(p + q).
    \]
    Taking the supremum over all $\alpha$ and $\beta$ yields $\mu(p) + \mu(q) \leq \mu(p + q)$.
    Hence, $\mu$ is an $F$-valued charge on $C_e$.
    Finally, since $\mu_\alpha \leq \eta$ for all $\alpha \in A$ and $\eta$ is order bounded, it follows that $\mu$ is also order bounded, thus $\mu \in \ba_+(C_e,F)$.
    One readily checks that $\mu_\alpha \uparrow \mu$.
\end{proof}

Recall that $C_e$ is a lattice, cf. \cite[Theorem 1.49]{Aliprantis-Burkinshaw-2006}.
For two lattices $X,Y$, a \emph{lattice isomorphism} is a bijective map $f \colon X \to Y$ such that, for all $x_1,x_2 \in X$, one has
\[
    f(x_1 \vee x_2) = f(x_1) \vee f(x_2) \text{ and } f(x_1 \wedge x_2) = f(x_1) \wedge f(x_2).
\]
Note that a linear map between two Riesz spaces is a Riesz isomorphism if and only if it is a linear lattice isomorphism.

\begin{theorem} \label{thm:space-of-charges-independent-of-w-o-unit}
    Let $E$ be a Riesz space with the principal projection property, $e_1,e_2 \in E$ weak order units, and $F$ a Dedekind complete Riesz space.
    \begin{enumList}
        \item \label{it:thm:space-of-charges-independent-of-w-o-unit:bijection-between-components}
        The maps
        \begin{align*}
            \theta_1 \colon C_{e_1} \to C_{e_2}, \quad p \mapsto P_p(e_2), \\
            \theta_2 \colon C_{e_2} \to C_{e_1}, \quad q \mapsto P_q(e_1),
        \end{align*}
        are mutually inverse lattice isomorphisms.

        \item \label{it:thm:space-of-charges-independent-of-w-o-unit:well-defined}
        For every $\mu \in \ba(C_{e_2},F)$, one has $\mu \circ \theta_1 \in \ba(C_{e_1},F)$ and, for every $\mu \in \ba(C_{e_1},F)$, one has $\mu \circ \theta_2 \in \ba(C_{e_2},F)$.

        \item \label{it:thm:space-of-charges-independent-of-w-o-unit:riesz-isomorphism}
        The maps
        \begin{align*}
            \Phi_1 \colon \ba(C_{e_2},F) \to \ba(C_{e_1},F), & \quad \mu \mapsto \mu \circ \theta_1, \\ 
            \Phi_2 \colon \ba(C_{e_1},F) \to \ba(C_{e_2},F), & \quad \mu \mapsto \mu \circ \theta_2,
        \end{align*}
        are mutually inverse Riesz isomorphisms.
    \end{enumList}
\end{theorem}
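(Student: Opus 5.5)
The plan is to prove (a) first, since (b) and (c) follow from it formally.

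For (a), I first check that $\theta_1$ is well defined, i.e.\ that $P_p(e_2)$ is a component of $e_2$. This holds because band projections satisfy $0 \le P_p \le \id$ and $P_p(e_2) \wedge (e_2 - P_p(e_2)) = P_p(e_2) \wedge (\id - P_p)(e_2) = 0$; the second equality uses that the two terms lie in the disjoint bands $B_p$ and $B_p^\rmd$. The same argument works for $\theta_2$.

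The key identity is $B_{P_p(e_2)} = B_p$ for every $p \in C_{e_1}$. The inclusion $\subseteq$ is clear. For $\supseteq$, I need $p \in B_{P_p(e_2)}$. Since $e_2$ is a weak order unit, $e_2 \wedge p$ is nonzero on every nonzero piece of $B_p$; more precisely, $p \perp x$ for all $x \in B_{e_2 \wedge p}^\rmd$. Indeed, if $y \coloneqq \abs{x} \wedge p$, then $y \in B_p$ and $y \perp e_2 \wedge p$. Since $y \le p$, we get $y \wedge e_2 = y \wedge p \wedge e_2 = 0$, so $y = 0$ because $e_2$ is a weak order unit. Hence $B_p = B_{e_2 \wedge p} \subseteq B_{P_p(e_2)}$, using $e_2 \wedge p \le P_p(e_2)$.

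From this identity, $\theta_2(\theta_1(p)) = P_{P_p(e_2)}(e_1) = P_p(e_1) = p$, where the last equality holds because $p$ is a component of $e_1$, so $e_1 = p + (e_1 - p)$ with $p \in B_p$ and $e_1 - p \perp p$. Symmetrically, $\theta_1 \circ \theta_2 = \id$, so the two maps are mutually inverse bijections.

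For the lattice property, I use that $p \mapsto B_p$ is order preserving in both directions on components: $p \le q$ if and only if $B_p \subseteq B_q$ (if $B_p \subseteq B_q$, then $p = P_q(p) \le P_q(e_1) = q$). Since $\theta_1$ preserves the band, it is an order isomorphism between the posets $C_{e_1}$ and $C_{e_2}$. An order isomorphism between lattices automatically preserves $\vee$ and $\wedge$. Alternatively, one can directly use $P_{p \vee q} = P_p \vee P_q$ and $P_{p\wedge q}=P_pP_q$.

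For (b), take $\mu \in \ba(C_{e_2},F)$. Then $\mu(\theta_1(0)) = \mu(0) = 0$. If $p \wedge q = 0$ in $C_{e_1}$, then $p + q = p \vee q$ and, by (a), $\theta_1(p) \wedge \theta_1(q) = \theta_1(0) = 0$ and $\theta_1(p+q) = \theta_1(p) \vee \theta_1(q) = \theta_1(p) + \theta_1(q)$. Additivity of $\mu \circ \theta_1$ follows, and order boundedness transfers trivially. The argument for $\theta_2$ is symmetric.

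For (c), linearity of $\Phi_1$ and $\Phi_2$ is clear, and they are mutually inverse by (a). Both maps send positive charges to positive charges. Hence they are bipositive linear bijections, and therefore Riesz isomorphisms.

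The only real obstacle is the band identity $B_{P_p(e_2)} = B_p$, which is where the weak order unit property of $e_2$ is used.
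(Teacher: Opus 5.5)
Your proof is correct. The band identity $B_{P_p(e_2)} = B_p$ and the inverse computation $\theta_2(\theta_1(p)) = p$ are essentially the paper's argument. The paper phrases it as $B_p^\rmd = B_{P_p(e_2)}^\rmd$, using $\abs{x}\wedge p \wedge e_2 \le \abs{x}\wedge P_p(e_2)$ and the weak order unit property exactly as you do. Your parts (b) and (c) also coincide with the paper's.

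The genuine difference is how you obtain the lattice property in (a). The paper verifies $P_{p\vee q}(e_2) = P_p(e_2)\vee P_q(e_2)$ by a direct computation with $P_p(e_2) = \sup_n (np)\wedge e_2$, and handles meets ``similarly''. You instead observe that $p \le q \iff B_p \subseteq B_q$ for components. The band identity then makes $\theta_1$ an order isomorphism of posets, and such a map automatically preserves the lattice operations. These are the operations of $E$, since $C_e$ is closed under $\vee$ and $\wedge$.

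Your route gets bijectivity and the lattice property from a single lemma, and treats joins and meets at once. The paper's direct computation shows that $\theta_1$ is a lattice homomorphism without the weak order unit hypothesis, which is then needed only for invertibility. You also check that $P_p(e_2)$ really is a component of $e_2$, which the paper leaves implicit.
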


\begin{proof}
    \ref{it:thm:space-of-charges-independent-of-w-o-unit:bijection-between-components}
    First, we show that $\theta_1$ is a lattice homomorphism.
    To this end, let $p,q \in C_{e_1}$.
    For every $n \in \nat$, we have
    \[
        (n(p \vee q)) \wedge e_2
        = ((np) \wedge e_2) \vee ((nq) \wedge e_2)
        \leq P_p(e_2) \vee P_q(e_2)
    \]
    and taking the supremum over all $n$ yields $P_{p \vee q}(e_2) \leq P_p(e_2) \vee P_q(e_2)$.
    For all $m,n \in \nat$, we have
    \begin{align*}
        ((np) \vee e_2) \wedge ((mq) \vee e_2)
        & = ((np) \vee (mq)) \wedge e_2 \\
        & \leq ((n + m)(p \vee q)) \wedge e_2
          \leq P_{p \vee q}(e_2)
    \end{align*}
    and taking the supremum first over all $n$ and then over all $m$ yields $P_p(e_2) \vee P_q(e_2) \leq P_{p \vee q}(e_2)$.
    We conclude that
    \[
        \theta_1(p \vee q)
        = P_{p \vee q}(e_2)
        = P_p(e_2) \vee P_q(e_2)
        = \theta_1(p) \vee \theta_1(q).
    \]
    Similarly, one can verify that $\theta_1(p \wedge q) = \theta_1(p) \wedge \theta_1(q)$.
    We conclude that $\theta_1$ is a lattice homomorphism.
    Note that a similar argument shows that $\theta_2$ is a lattice homomorphism.

    Next, we show that $\theta_1$ and $\theta_2$ are mutually inverse.
    Let $p \in C_{e_1}$.
    For all $x \in B_p^d$, we have $\abs{x} \wedge P_p(e_2) = 0$, thus $x \in B_{P_p(e_2)}^d$.
    Conversely, for all $x \in B_{P_p(e_2)}^d$, we have
    \[
        0
        \leq \Abs{x} \wedge p \wedge e_2
        \leq \Abs{x} \wedge P_p(e_2)
        = 0,
    \]
    thus $\abs{x} \wedge p \wedge e_2 = 0$.
    Since $e_2$ is a weak order unit, we obtain $\abs{x} \wedge p = 0$, thus $x \in B_p^d$.
    We conclude that $B_p^d = B_{P_p(e_2)}^d$, which implies $B_p = B_{P_p(e_2)}$.
    Hence,
    \[
        \theta_2(\theta_1(p))
        = P_{P_p(e_2)}(e_1)
        = P_p(e_1)
        = p,
    \]
    which shows that $\theta_2 \circ \theta_1 = \id_{C_{e_1}}$.
    Similarly, one shows that $\theta_1 \circ \theta_2 = \id_{C_{e_2}}$.
    Hence, $\theta_1$ and $\theta_2$ are mutually inverse.

    \smallskip

    \ref{it:thm:space-of-charges-independent-of-w-o-unit:well-defined}
    Let $\mu \in \ba(C_{e_2},F)$ and note that $\mu(\theta_1(0)) = \mu(0) = 0$.
    By \ref{it:thm:space-of-charges-independent-of-w-o-unit:bijection-between-components}, we have
    \[
        \theta_1(p_1) \wedge \theta_1(p_2)
        = \theta_1(p_1 \wedge p_2)
        = \theta_1(0)
        = 0.
    \]
    From this, it follows that
    \[
        \mu(\theta_1(p_1 + p_2))
        = \mu(\theta_1(p_1 \vee p_2))
        = \mu(\theta_1(p_1) \vee \theta_1(p_2))
        = \mu(\theta_1(p_1)) + \mu(\theta_1 p_2)).
    \]
    This shows that $\mu \circ \theta_1$ is a signed $F$-valued charge on $C_{e_1}$.
    Since $\mu$ is order bounded, $\mu \circ \theta_1$ is order bounded, as well, thus $\mu \circ \theta_1 \in \ba(C_{e_1},F)$.
    Similarly, it follows that $\mu \circ \theta_2 \in \ba(C_{e_2},F)$ for all $\mu \in \ba(C_{e_1},F)$.

    \smallskip

    \ref{it:thm:space-of-charges-independent-of-w-o-unit:riesz-isomorphism}
    It is straightforward that the maps $\Phi_1$ and $\Phi_2$ are both linear and positive. 
    Since $\theta_1$ and $\theta_2$ are mutually inverse, it follows that $\Phi_1$ and $\Phi_2$ are also mutually inverse.
    Hence, $\Phi_1$ and $\Phi_2$ are mutually inverse Riesz isomorphisms.
\end{proof}

\begin{example} \label{ex:signed-charge}
    Let $(\Omega,\calA,\lambda)$ be a measure space such that $\lambda(\Omega) < \infty$.
    Similarly to \cref{ex:charge-motivation}, $\bfone \colon \omega \mapsto 1$ is a weak order unit of $L^1(\Omega,\calA,\mu)$ and the components of $\bfone$ are precisely the characteristic functions of measurable sets, i.e., $C_\bfone = \dset{\bfone_A}{A \in \calA}$.
    Let $\ba(\calA)$ be the space of all bounded real-valued signed charges on $\calA$ and note that $\ba(\calA)$ is a Dedekind complete Riesz space where, for all $\mu,\nu \in \ba(\calA)$ and $A \in \calA$, one has
    \[
        (\mu \vee \nu)(A) = \sup\Dset{\mu(B) + \nu(A \setminus B)}{B \in \calA, B \subseteq A},
    \]
    cf. \cite[Theorem 10.53]{Aliprantis-Border-2006}.
    Then the maps
    \begin{align*}
        \Phi \colon \ba(C_\bfone,\real) \to \ba(\calA), & \quad \mu \mapsto \left( A \mapsto \mu(\bfone_A) \right), \\
        \Psi \colon \ba(\calA) \to \ba(C_\bfone,\real), & \quad \nu \mapsto \left( \bfone_A \mapsto \nu(A) \right),
    \end{align*}
    are mutually inverse Riesz isomorphisms.
\end{example}

\begin{theorem}
    Let $(E,e,T)$ be a $T$-universally complete conditional Riesz triple.
    Then $\ba(C_e,R(T))$ is an $R(T)$-module with the addition and multiplication given pointwise and
    \begin{equation}
        \label{eq:thm:ba-norm:norm}
        \Norm{\cdot}_{\ba(C_e,R(T))} \colon \ba(C_e,R(T)) \to R(T), \quad \mu \mapsto \abs{\mu}(e),
    \end{equation}
    is an $R(T)$-valued norm on $\ba(C_e,R(T))$.
\end{theorem}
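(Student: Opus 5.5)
First I check that the module structure is well defined. For $\alpha \in R(T)$ and $\mu \in \ba(C_e,R(T))$, set $(\alpha\mu)(p) \coloneqq \alpha\cdot\mu(p)$, with the product taken in the $f$-algebra $R(T)$. Clearly $(\alpha\mu)(0)=0$, and finite additivity on disjoint components follows from distributivity in $R(T)$. For order boundedness, take $g\in R(T)_+$ with $\abs{\mu(p)}\le g$ for all $p\in C_e$. Since $\abs{\alpha\mu(p)}=\abs{\alpha}\abs{\mu(p)}$ in an $f$-algebra and multiplication by $\abs{\alpha}\ge 0$ is positive, we get $\abs{(\alpha\mu)(p)}\le \abs{\alpha}g$. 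The module axioms then hold pointwise because $R(T)$ is a commutative algebra with unit $e$.

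Next I verify the norm axioms for $\norm{\mu} = \abs{\mu}(e)$, using the formulas of \cref{thm:o-bounded-signed-char-riesz-space}. The three axioms are handled as follows.
\begin{enumItem}
\item \emph{Definiteness.} If $\abs{\mu}(e)=0$, then for every $p\in C_e$ positivity and additivity of the charge $\abs{\mu}$ give $0\le \abs{\mu}(p)\le \abs{\mu}(p)+\abs{\mu}(e-p)=\abs{\mu}(e)=0$. Since $\abs{\mu(p)}\le\abs{\mu}(p)$, this forces $\mu=0$.
\item \emph{Triangle inequality.} This follows from $\abs{\mu+\nu}\le\abs{\mu}+\abs{\nu}$ in the Riesz space $\ba(C_e,R(T))$, because evaluation at $e$ is a positive linear map $\ba(C_e,R(T))\to R(T)$ (the order is the one defined through $\ba_+$).
\item \emph{Homogeneity.} I need $\abs{\alpha\mu}(e)=\abs{\alpha}\,\abs{\mu}(e)$ for all $\alpha \in R(T)$.
\end{enumItem}

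For homogeneity I will prove the stronger identity $\abs{\alpha\mu}=\abs{\alpha}\abs{\mu}$ as charges. By \eqref{eq:thm:o-bounded-signed-char-riesz-space:mod},
\[
\abs{\alpha\mu}(p)=\sup\Dset{\alpha(\mu(q)-\mu(p-q))}{q\in C_e,\ q\le p}.
\]
The main obstacle is pulling $\alpha$ out of this supremum, since $\alpha$ need not be positive. I would split $\alpha=\alpha^+-\alpha^-$ and use the band projections $P_{\alpha^+}$ and $P_{\alpha^-}$, which are algebra homomorphisms on $R(T)$ (cf.\ \cite[Corollary 5.5]{Huijsmans-dePagter-1984}) and commute with suprema. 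With $\pi\coloneqq P_{\alpha^+}(e)$ and $\pi'\coloneqq P_{\alpha^-}(e)$, we have $\alpha x=\alpha^+\pi x-\alpha^-\pi' x$, and the bands of $\alpha^+$ and $\alpha^-$ are disjoint. On $B_{\alpha^-}$, replacing $q$ by $p-q$ turns $-\alpha^-(\mu(q)-\mu(p-q))$ into $\alpha^-(\mu(q)-\mu(p-q))$. Hence the supremum splits componentwise into the sum of the suprema over the two disjoint bands, and the problem reduces to positive multipliers.

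So it remains to show that multiplication by a positive $\beta\in R(T)_+$ preserves suprema of order bounded sets in $R(T)$. Multiplication by $\beta$ is an orthomorphism of $R(T)$, and orthomorphisms on Archimedean spaces are order continuous (cf.\ \cite{Aliprantis-Burkinshaw-2006}, \cite{Zaanen-1997}). Since $R(T)$ is universally complete, every element is admissible here. This yields $\sup\beta A=\beta\sup A$. On the complement band of $\alpha$, both sides vanish. Combining the pieces gives $\abs{\alpha\mu}(p)=\abs{\alpha}\abs{\mu}(p)$, and taking $p=e$ proves homogeneity.
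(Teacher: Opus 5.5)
Your proof is correct. It differs from the paper's only in how the sign of the multiplier is handled.

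Both arguments rest on the positive case $\abs{g\mu} = g\abs{\mu}$ for $g \in R(T)_+$, obtained by pulling $g$ out of the supremum formula for $\abs{\mu}(p)$.

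\textbf{The paper's route.} It passes to general $f \in R(T)$ inside the Riesz space $\ba(C_e,R(T))$. It shows $f^+\mu \perp f^-\mu$ by using the infimum formula to bound $(\abs{f^+\mu}\wedge\abs{f^-\mu})(p)$ from above by $f^+\abs{\mu}(p)\wedge f^-\abs{\mu}(p) = 0$. It then concludes $\abs{f\mu} = \abs{\abs{f}\mu}$ from the identity $\abs{x-y} = \abs{x+y}$ for disjoint $x,y$.

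\textbf{Your route.} You work pointwise in $R(T)$, splitting it into $B_{\alpha^+}$, $B_{\alpha^-}$ and $B_\alpha^d$. You absorb the sign on $B_{\alpha^-}$ through the invariance of $\dset{\mu(q)-\mu(p-q)}{q\in C_e,\ q\le p}$ under $q\mapsto p-q$.

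\textbf{Comparison.} The paper's route is shorter because it reuses the lattice structure of $\ba(C_e,R(T))$ and needs no band decomposition of $R(T)$. Yours is more hands-on, yields the pointwise identity $\abs{\alpha\mu}(p)=\abs{\alpha}\abs{\mu}(p)$ directly, and avoids any disjointness computation in the charge space.

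\textbf{One small precision in your positive case.} Order continuity of an orthomorphism only handles increasing nets. Your set $A$ need not be directed, so to get $\sup\beta A = \beta\sup A$ you should also use that multiplication by $\beta \geq 0$ is a lattice homomorphism (true in any Archimedean $f$-algebra) and pass through finite suprema. The paper also leaves this point implicit.
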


\begin{proof}
    It is straightforward that $\ba(C_e,R(T))$ is an $R(T)$-module and that $\Norm{\cdot}_{\ba(C_e,R(T))}$ satisfies the triangle inequality and is definite.
    We show that $\Norm{\cdot}_{\ba(C_e,R(T))}$ is absolutely homogeneous.

    Let $\mu \in \ba(C_e,R(T))$.
    For all $g \in R(T)_+$ and $p \in C_e$, we have
    \begin{equation}
        \label{eq:thm:ba-norm:pos-hom}
        \begin{aligned}
            \abs{g \mu}(p)
            & = \sup\Dset{g \mu(q) - g \mu(p - q)}{q \in C_e, q \leq p} \\
            & = g \sup\Dset{\mu(q) - \mu(p - q)}{q \in C_e, q \leq p}
              = g \abs{\mu}(p).
        \end{aligned}
    \end{equation}

    Let $f \in R(T)$.
    For all $p \in C_e$, it follows with \eqref{eq:thm:ba-norm:pos-hom} that 
    \begin{align*}
        \left( \abs{f^+ \mu} \wedge \abs{f^- \mu} \right)(p)
        & = \inf \Dset{\abs{f^+ \mu}(q) + \abs{f^- \mu}(p - q)}{p \in C_e, p \leq q} \\
        & \leq \abs{f^+ \mu}(p) \wedge \abs{f^- \mu}(p)
          = f^+ \abs{\mu}(p) \wedge f^- \abs{\mu}(p) \\
        & = (f^+ \wedge f^-) \abs{\mu}(p)
          = 0,
    \end{align*}
    thus $f^+ \mu$ and $f^- \mu$ are disjoint.
    Hence, we have 
    \begin{align*}
        \abs{f \mu}
        & = \abs{f^+ \mu - f^- \mu}
          = \abs{f^+ \mu + f^- \mu}
          = \abs{\abs{f} \mu}.
    \end{align*}
    Finally, with the aid of \eqref{eq:thm:ba-norm:pos-hom}, we calculate
    \[
        \norm{f \mu}_{\ba(C_e,R(T))}
        = \abs{f \mu}(e)
        = \abs{\abs{f} \mu}(e)
        = \abs{f} \abs{\mu}(e)
        = \abs{f} \norm{\mu}_{\ba(C_e,R(T))}.
    \]
    We conclude that $\Norm{\cdot}_{\ba(C_e,R(T))}$ is an $R(T)$-valued norm on $\ba(C_e,R(T))$.
\end{proof}

For measurable spaces $(\Omega,\calA)$, the norm of a signed charge $\mu \colon \calA \to \real$ is usually defined as the \emph{total variation}, i.e.,
\[
    \norm{\mu} = \sup \Dset{\sum_{A \in \calZ} \abs{\mu(A)}}{\calZ \subseteq \calA \text{ is a finite partition of $\Omega$}}.
\]
We show that the norm on $\ba(C_e,R(T))$, defined in \eqref{eq:thm:ba-norm:norm}, has a similar representation.
To this end, we call a finite set $\set{p_1,\dots,p_n}$ of pairwise disjoint components of $e$ a \emph{finite partition of $e$} if $\sum_{i=1}^n p_i = e$.

\begin{proposition}
    Let $(E,e,T)$ be a $T$-universally complete conditional Riesz triple and $\mu \in \ba(C_e,R(T))$.
    Then:
    \begin{equation}
        \label{eq:variation-norm}
        \norm{\mu}_{\ba(C_e,R(T))} = \sup \Dset{\sum_{p \in \calZ} \abs{\mu(p)}}{\calZ \subseteq C_e \text{ is a finite partition of $e$}}.
    \end{equation}
\end{proposition}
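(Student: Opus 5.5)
Denote the right-hand side of \eqref{eq:variation-norm} by $V$. The supremum exists in $R(T)$, since $R(T)$ is Dedekind complete and every sum in question is bounded by $\abs{\mu}(e)$ (see the first step). The plan is to prove $V \leq \abs{\mu}(e)$ and $\abs{\mu}(e) \leq V$ separately, working only with the formulas of \cref{thm:o-bounded-signed-char-riesz-space} and with finite additivity.

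For $V \leq \abs{\mu}(e)$, let $\calZ = \set{p_1,\dots,p_n}$ be a finite partition of $e$. For every component $p$ we have $\abs{\mu(p)} \leq \abs{\mu}(p)$, because $\mu \leq \abs{\mu}$ and $-\mu \leq \abs{\mu}$ in the Riesz space $\ba(C_e,R(T))$, and the order on that space is pointwise on components. Since $\abs{\mu}$ is a charge and the $p_i$ are pairwise disjoint with $\sum_i p_i = e$, finite additivity gives $\sum_{i} \abs{\mu(p_i)} \leq \sum_i \abs{\mu}(p_i) = \abs{\mu}(e)$. Taking the supremum over all partitions yields $V \leq \abs{\mu}(e)$.

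For $\abs{\mu}(e) \leq V$, apply \eqref{eq:thm:o-bounded-signed-char-riesz-space:mod} with $p = e$:
\[
    \abs{\mu}(e) = \sup\Dset{\mu(q) - \mu(e - q)}{q \in C_e}.
\]
For every $q \in C_e$, the complement $e - q$ is again a component, and $q \wedge (e-q) = 0$. Hence $\set{q, e-q}$ is a finite partition of $e$, after discarding a zero element if $q \in \set{0,e}$, which changes nothing because $\mu(0) = 0$. Therefore
\[
    \mu(q) - \mu(e-q) \leq \abs{\mu(q)} + \abs{\mu(e-q)} \leq V.
\]
Taking the supremum over $q$ gives $\abs{\mu}(e) \leq V$.

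I do not expect a serious obstacle. The only points needing care are the following.
\begin{enumItem}
    \item The inequality $\abs{\mu(p)} \leq \abs{\mu}(p)$ is justified because $\abs{\mu}(p) \geq \mu(p) \vee (-\mu(p))$ in $R(T)$. This follows from the lattice operations, or directly from \eqref{eq:thm:o-bounded-signed-char-riesz-space:mod} by choosing $q = p$ and $q = 0$.
    \item The partition convention requires pairwise disjoint components summing to $e$, and zero components are harmless.
\end{enumItem}
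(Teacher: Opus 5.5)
Your proposal is correct and follows the same route as the paper's proof: the bound $\sum_i \abs{\mu(p_i)} \leq \sum_i \abs{\mu}(p_i) = \abs{\mu}(e)$ gives one inequality. For the reverse, you apply the formula $\abs{\mu}(e) = \sup\dset{\mu(q) - \mu(e-q)}{q \in C_e}$ from \cref{thm:o-bounded-signed-char-riesz-space} together with the two-element partition $\set{q, e-q}$.
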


\begin{proof}
    First note that, for every finite partition $\calZ = \set{p_1,\dots,p_n} \subseteq C_e$ of $e$, we have
    \[
        \sum_{i=1}^n \abs{\mu(p_i)}
        \leq \sum_{i=1}^n \abs{\mu}(p_i)
        = \abs{\mu}(e).
    \]
    Since $R(T)$ is Dedekind complete, it follows that the supremum in the right-hand-side of \eqref{eq:variation-norm} exists and is bounded from above by $\abs{\mu}(e)$.
    Conversely, denoting the supremum in \eqref{eq:variation-norm} by $S$, for every $p \in C_e$, we have
    \[
        \mu(p) - \mu(e - p)
        \leq \abs{\mu(p)} + \abs{\mu(e - p)}
        \leq S. 
    \]
    Taking the supremum over all $p \in C_e$ together with the formula for $\abs{\mu}(e)$ provided in \cref{thm:o-bounded-signed-char-riesz-space} yields $\abs{\mu}(e) \leq S$.
    Hence, $\norm{\mu}_{\ba(C_e,R(T))} = \abs{\mu}(e) = S$.
\end{proof}

\begin{proposition} \label{thm:space-of-charges-independent-of-w-o-unit-isometric}
    Let $(E,e_1,T)$ be a $T$-universally complete conditional Riesz triple and let $e_2 \in E_+$ be another weak order unit with $Te_2 = e_2$.
    Then the maps
    \begin{align*}
        \Phi_1 \colon \ba(C_{e_1},R(T)) \to \ba(C_{e_2},R(T)), & \quad \mu \mapsto \left( q \mapsto \mu(P_q(e_1)) \right), \\
        \Phi_2 \colon \ba(C_{e_2},R(T)) \to \ba(C_{e_1},R(T)), & \quad \mu \mapsto \left( p \mapsto \mu(P_p(e_2)) \right),
    \end{align*}
    are well-defined mutually inverse $R(T)$-linear isometric Riesz isomorphisms.
\end{proposition}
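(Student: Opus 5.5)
The plan is to reduce almost everything to \cref{thm:space-of-charges-independent-of-w-o-unit}. The maps $\Phi_1,\Phi_2$ here are, up to relabelling, exactly the maps of that theorem. Indeed, $\Phi_1(\mu) = \mu \circ \theta$ with $\theta \colon C_{e_2} \to C_{e_1}$, $q \mapsto P_q(e_1)$, and similarly for $\Phi_2$. Since $E = L^1(T)$ is Dedekind complete, it has the principal projection property, and $R(T)$ is Dedekind complete. So \cref{thm:space-of-charges-independent-of-w-o-unit} yields at once that $\Phi_1,\Phi_2$ are well defined and are mutually inverse Riesz isomorphisms between $\ba(C_{e_1},R(T))$ and $\ba(C_{e_2},R(T))$. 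Two things remain: $R(T)$-linearity and the isometry property.

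$R(T)$-linearity is immediate, because the module operations are pointwise. For $g \in R(T)$ and $q \in C_{e_2}$ we have $\Phi_1(g\mu)(q) = g\mu(P_q(e_1)) = (g\Phi_1(\mu))(q)$, and additivity holds for the same reason. The same argument applies to $\Phi_2$.

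For the isometry, I would use that a Riesz isomorphism commutes with the modulus, so $\abs{\Phi_1(\mu)} = \Phi_1(\abs{\mu})$. This gives
\[
    \norm{\Phi_1(\mu)}_{\ba(C_{e_2},R(T))} = \abs{\Phi_1(\mu)}(e_2) = \abs{\mu}(P_{e_2}(e_1)).
\]
Since $e_2$ is a weak order unit, $B_{e_2} = E$, so $P_{e_2} = \id$ and $P_{e_2}(e_1) = e_1$. Hence the right-hand side is $\abs{\mu}(e_1) = \norm{\mu}_{\ba(C_{e_1},R(T))}$. The symmetric computation, with $P_{e_1}(e_2) = e_2$, shows that $\Phi_2$ is isometric; alternatively, this follows because $\Phi_2 = \Phi_1^{-1}$.

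One point needs care. The norm on $\ba(C_{e_2},R(T))$ is defined by the same formula $\mu \mapsto \abs{\mu}(e_2)$ as in \eqref{eq:thm:ba-norm:norm}, and $(E,e_2,T)$ is again a conditional Riesz triple because $Te_2 = e_2$. That theorem was stated with the unit $e$ fixed, but its proof does not use any specific property of $e$ beyond being a weak order unit. So the norm is legitimate for $e_2$ as well, and the hypothesis $Te_2 = e_2$ is only needed to make this setting consistent. I would remark on this explicitly.

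I expect no real obstacle. The only mildly delicate step is justifying that the Riesz isomorphism commutes with $\abs{\cdot}$ in $\ba$. This is standard, but it can also be checked directly from \eqref{eq:thm:o-bounded-signed-char-riesz-space:mod}, using that $\theta$ is a lattice isomorphism of components preserving complements: $P_{q'}(e_1) = P_q(e_1) - P_{q-q'}(e_1)$ for $q' \leq q$.
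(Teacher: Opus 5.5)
Your proposal is correct and follows essentially the same route as the paper. Like the paper, you get the Riesz isomorphism property from \cref{thm:space-of-charges-independent-of-w-o-unit}, treat $R(T)$-linearity as pointwise, and derive the isometry from $\abs{\Phi_1(\mu)} = \Phi_1(\abs{\mu})$ together with $P_{e_2}(e_1) = e_1$; your remark that $\mu \mapsto \abs{\mu}(e_2)$ is a legitimate $R(T)$-valued norm for the unit $e_2$ is a sensible clarification that the paper leaves implicit.
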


\begin{proof}
    The $R(T)$-linearity is straightforward.
    It follows from \cref{thm:space-of-charges-independent-of-w-o-unit} that $\Phi_1$ and $\Phi_2$ are Riesz isomorphisms.
    Since $\Phi_1$ is a Riesz isomorphism, we have, for all $\mu \in \ba(C_{e_1},R(T))$, that
    \begin{align*}
        \Norm{\Phi_1(\mu)}_{\ba(C_{e_2},R(T))}
        & = \left( \Abs{\Phi_1(\mu)} \right)(e_2)
          = \left( \Phi_1(\abs{\mu}) \right) (e_2) \\
        & = \abs{\mu}(P_{e_2}(e_1))
          = \abs{\mu}(e_1)
          = \norm{\mu}_{\ba(C_{e_1},R(T))}.
    \end{align*}
    Thus, $\Phi_1$ is isometric.
    Similarly, $\Phi_2$ is isometric.
\end{proof}

By \cref{thm:space-of-charges-independent-of-w-o-unit} and \cref{thm:space-of-charges-independent-of-w-o-unit-isometric}, the structure of the Riesz space and $R(T)$-module $\ba(C_e,R(T))$ does not depend on the choice of the weak order unit $e$.

A notion for absolute continuity for positive linear operators, which is similar to the subsequent definition, was introduced in \cite{Luxemburg-Shep-1978}.

\begin{definition} \label{def:T-absolutely-continuous}
    Let $(E,e,T)$ be a conditional Riesz triple.
    We call an $R(T)$-valued signed charge $\mu \in \ba(C_e,R(T))$ \emph{$T$-absolutely continuous} if $\mu(p) \in B_{Tp}$ for all $p \in C_e$, where $B_{Tp}$ is the band generated by $Tp$ in $R(T)$.
    In this case, we write $\mu \ll T$.
    We denote
    \[
        \ba(T) \coloneqq \Dset{\mu \in \ba(C_e,R(T))}{\mu \ll T}.
    \]
\end{definition}

\begin{example}
    Let $(\Omega,\calA,\lambda)$ be a finite measure space, and consider the Riesz space $L^1(\Omega,\calA,\lambda)$ and the expectation operator
    \[
        T \colon L^1(\Omega,\calA,\lambda) \to L^1(\Omega,\calA,\lambda), \quad f \mapsto \left( \int f \,\rmd\lambda \right) \cdot \bfone,
    \]
    where $\bfone$ is the (equivalence class) of the constant function $\bfone \colon \omega \to 1$.
    Then, for all $p \in C_e$, one has
    \[
        B_{Tp} =
        \begin{cases}
            \real \cdot \bfzero = \Set{0} & \text{if } Tp = 0, \\
            \real \cdot \bfone = \Dset{\alpha \bfone}{\alpha \in \real} & \text{if } Tp > 0,
        \end{cases}
    \]
    where $\bfzero$ is the (equivalence class of the) constant function $\bfzero \colon \omega \to 0$.
    In particular, a real-valued signed charge $\mu \in \ba(\calA)$ is absolutely continuous with respect to $\lambda$ if and only if $\mu \ll T$ (with the identification of $\ba(\calA)$ and $\ba(C_e,\real)$ from \cref{ex:signed-charge}).
\end{example}

\begin{theorem}[Lebesgue decomposition]
    Let $(E,e,T)$ be a $T$-universally complete conditional Riesz triple.
    Then $\ba(T)$ is an $R(T)$-submodule of $\ba(C_e,R(T))$ and a projection band in $\ba(C_e,R(T))$.
\end{theorem}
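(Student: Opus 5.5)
We first check that $\ba(T)$ is an $R(T)$-submodule of $\ba(C_e,R(T))$, then that it is an ideal, then that it is order closed. Since $\ba(C_e,R(T))$ is Dedekind complete by \cref{thm:o-bounded-signed-char-riesz-space}, every band in it is a projection band (\cite[Theorem 1.42]{Aliprantis-Burkinshaw-2006}), so it suffices to show that $\ba(T)$ is a band.

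\emph{Submodule.} Let $\mu,\nu \in \ba(T)$, $g \in R(T)$ and $p \in C_e$. Then $\mu(p)+\nu(p) \in B_{Tp}$, because $B_{Tp}$ is a linear subspace of $R(T)$. Moreover, $g\mu(p) \in B_{Tp}$: since $R(T)$ is an $f$-algebra and $\mu(p) \in B_{Tp}$, we have $\abs{g\mu(p)} = \abs{g}\abs{\mu(p)}$, and bands in an $f$-algebra are closed under multiplication by arbitrary elements. Alternatively, $P_{Tp}$ is an algebra homomorphism (\cite[Corollary 5.5]{Huijsmans-dePagter-1984}) with $P_{Tp}(g\mu(p)) = P_{Tp}(g)\,\mu(p)$, which one checks equals $g\mu(p)$ using $P_{Tp}(e)\mu(p) = \mu(p)$, as in \cref{thm:prod-w/-band-proj-invariant}.

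\emph{Ideal.} Let $\abs{\nu} \le \abs{\mu}$ with $\mu \in \ba(T)$. We first show $\abs{\mu} \in \ba(T)$. By \eqref{eq:thm:o-bounded-signed-char-riesz-space:mod}, $\abs{\mu}(p)$ is the supremum of $\mu(q)-\mu(p-q)$ over components $q \le p$. For such $q$ we have $q \le p$ and $p-q \le p$, so $Tq,\,T(p-q) \le Tp$, hence $B_{Tq},B_{T(p-q)} \subseteq B_{Tp}$. Each term of the supremum therefore lies in $B_{Tp}$, and since bands are order closed (and the supremum in $R(T)$ is taken within the order closed $R(T)$, see \cref{thm:projections-E-vs-R(T)}), $\abs{\mu}(p) \in B_{Tp}$. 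Then $\abs{\nu(p)} \le \abs{\nu}(p) \le \abs{\mu}(p)$, and since a band is solid, $\nu(p) \in B_{Tp}$.

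\emph{Order closedness.} Let $0 \le \mu_\alpha \uparrow \mu$ in $\ba(C_e,R(T))$ with $\mu_\alpha \in \ba(T)$. By \cref{thm:o-bounded-signed-char-riesz-space}\ref{it:thm:o-bounded-signed-char-riesz-space:dedekind-complete}, $\mu_\alpha(p) \uparrow \mu(p)$ in $R(T)$ for every $p$. Since $B_{Tp}$ is a band in $R(T)$, it follows that $\mu(p) \in B_{Tp}$. Thus $\ba(T)$ is a band, and hence a projection band.

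The only delicate point is keeping track of whether bands and suprema are taken in $R(T)$ or in $E$; this is handled by \cref{thm:projections-E-vs-R(T)}.
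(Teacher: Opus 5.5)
Your proof is correct and follows the same route as the paper: you show $\ba(T)$ is a submodule, then solid, then closed under increasing nets $0\le\mu_\alpha\uparrow\mu$ via the pointwise characterization in \cref{thm:o-bounded-signed-char-riesz-space}, and finally use Dedekind completeness to get a projection band. Your solidity step is more careful than the paper's one-line remark: you explicitly check that $\abs{\mu}(p)\in B_{Tp}$, using $B_{Tq}\subseteq B_{Tp}$ for $q\le p$ and the order closedness of $B_{Tp}$. That check is needed, because $\abs{\nu}\le\abs{\mu}$ only gives $\abs{\nu(p)}\le\abs{\mu}(p)$, not $\abs{\nu(p)}\le\abs{\mu(p)}$.
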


\begin{proof}
    It is straightforward that $\ba(T)$ is an $R(T)$-submodule of $\ba(C_e,R(T))$.
    %
    Since $B_{Tp}$ is solid for all $p \in C_e$, it follows that $\ba(T)$ is a solid Riesz subspace and, thus, an order ideal of $\ba(C_e,R(T))$.

    In order to show that $\ba(T)$ is a band, we show that $\ba(T)$ is order closed.
    Let $(\mu_\alpha)$ be a net in $\ba(T)_+$ and $\mu \in \ba(C_e,R(T))$ such that $\mu_\alpha \uparrow \mu$.
    By \cref{thm:o-bounded-signed-char-riesz-space}, we then have $\mu_\alpha(p) \uparrow \mu(p)$ for all $p \in C_e$.
    Since $\mu_\alpha(p) \in B_{Tp}$ for all $p \in C_e$ and $B_{Tp}$ is order closed, it follows that $\mu(p) \in B_{Tp}$.
    Hence, $\mu \in \ba(T)$ and $\ba(T)$ is order closed and, thus, a band.

    Finally, since $\ba(C_e,R(T))$ is Dedekind complete by \cref{thm:o-bounded-signed-char-riesz-space}, $\ba(T)$ is a projection band in $\ba(C_e,R(T))$.
\end{proof}

\section{Integration with respect to \texorpdfstring{$T$}{T}-absolutely continuous \texorpdfstring{$R(T)$}{R(T)}-valued charges} \label{sec:integration}

Let $(E,e,T)$ be a $T$-universally complete conditional Riesz triple.
In this section, we develop an integration theory for elements in $L^\infty(T)$ with respect to $R(T)$-valued charges, i.e., we define the integral $\int f \,\rmd \mu$, where $f \in L^\infty(T)$ and $\mu \in \ba(T)$.
To this end, we first define \emph{$e$-step functions with $R(T)$-coefficients} and define the integral for these objects.
Afterwards, we extend the definition to $L^\infty(T)$.


\begin{definition}
    Let $(E,e,T)$ be a $T$-universally complete conditional Riesz triple.
    An element $x \in E$ is called an \emph{$e$-step function with $R(T)$-coefficients} if there exist $\alpha_1,\dots,\alpha_n \in R(T)$ and pairwise disjoint $p_1,\dots,p_n \in C_e$ such that $x = \sum_{i=1}^n \alpha_i p_i$.
    We call $\sum_{i=1}^n \alpha_i p_i$ a \emph{representation} of $x$.
    This representation is called a \emph{standard representation} if $\sum_{i=1}^n p_i = e$.
    We denote by $\calS_e(T)$ the set of all $e$-step functions with $R(T)$-coefficients.
\end{definition}

Note that, if $x = \sum_{i=1}^n \alpha_i p_i$ is a representation of $x \in \calS_e(T)$, then $x = \sum_{i=1}^n \alpha_i p_i + 0 \cdot (e - \sum_{i=1}^n p_i)$ is a standard representation of $x$.
Hence, every element of $\calS_e(T)$ has a standard representation.

\begin{proposition} \label{thm:step-functions-riesz-submodule}
    Let $(E,e,T)$ be a $T$-universally complete conditional Riesz triple, $x,y \in \calS_e(T)$ with standard representations $x = \sum_{i=1}^n \alpha_i p_i$ and $y = \sum_{j=1}^m \beta_j q_j$ and let $\gamma \in R(T)$.
    Then:
    \begin{align*}
        x + y & = \sum_{i=1}^n \sum_{j=1}^m (\alpha_i + \beta_j) (p_i \wedge q_j), \\
        \gamma x & = \sum_{i=1}^n \gamma \alpha_i p_i, \\
        \Abs{x} & = \sum_{i=1}^n \Abs{\alpha_i} p_i.
    \end{align*}
    In particular, $\calS_e(T)$ is both a Riesz subspace and $R(T)$-submodule of $L^\infty(T)$.
\end{proposition}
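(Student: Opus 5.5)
The plan is to carry out every computation in the $f$-algebra $E^u$. There $e$ is the multiplicative unit, and components $p,q\in C_e$ satisfy $p^2=p$ and $pq = p\wedge q$. Disjoint elements have product zero. Since $R(T)$ is an $f$-algebra with unit $e$ and $L^\infty(T)$ is an $R(T)$-module, every $\alpha_i p_i$ lies in $L^\infty(T)$: indeed $\abs{\alpha_i p_i}\le \abs{\alpha_i}e$ with $\abs{\alpha_i}\in R(T)_+$. Hence $\calS_e(T)\subseteq L^\infty(T)$.

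For the sum, I use that the representations are standard. Then $\sum_j q_j = e$, so
\[
    p_i = p_i e = \sum_{j=1}^m p_i q_j = \sum_{j=1}^m p_i\wedge q_j ,
\]
and symmetrically $q_j=\sum_i p_i\wedge q_j$. Substituting these into $x$ and $y$ and adding gives the first formula. The elements $p_i\wedge q_j$ are pairwise disjoint components of $e$. Their sum is $\sum_{i,j} p_iq_j = (\sum_i p_i)(\sum_j q_j) = e$, so the result is again a standard representation and lies in $\calS_e(T)$. The formula for $\gamma x$ is just distributivity of the module action. The $p_i$ remain pairwise disjoint, so $\gamma x\in\calS_e(T)$.

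For the modulus, the key fact is that the summands $\alpha_i p_i$ are pairwise disjoint. Since $\abs{\alpha_i p_i}\le \abs{\alpha_i}e$ lies in $B_{p_i}$, one could use band arguments. More cleanly, in the $f$-algebra one has $(\alpha_ip_i)(\alpha_jp_j)=\alpha_i\alpha_j p_ip_j=0$ for $i\ne j$, and this is equivalent to disjointness. For pairwise disjoint elements the modulus of the sum is the sum of the moduli. Finally, in an $f$-algebra $\abs{ab}=\abs{a}\abs{b}$, and $p_i\ge 0$, so $\abs{\alpha_ip_i}=\abs{\alpha_i}p_i$. This gives $\abs{x}=\sum_i\abs{\alpha_i}p_i\in\calS_e(T)$.

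Closure under addition, $R(T)$-scalar multiplication (in particular real scalars, via $\gamma=te$) and modulus then shows that $\calS_e(T)$ is both a Riesz subspace and an $R(T)$-submodule of $L^\infty(T)$. The main point needing care is that the products are taken in $E^u$ and agree with the $R(T)$-module action on $L^1(T)$. I take this compatibility from the construction recalled in Section 3, where $e$ is the algebraic unit of $E^u$ and the module action is multiplication in $E^u$. Otherwise the steps are routine.
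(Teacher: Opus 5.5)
Your proposal is correct and follows essentially the same route as the paper, which likewise splits $p_i = p_i \wedge e = \sum_j (p_i \wedge q_j)$ (and symmetrically $q_j$) using $\sum_j q_j = e$ to get the sum formula, and simply declares the formulas for $\gamma x$ and $\abs{x}$ immediate. The only differences are that you phrase the splitting through products in the $f$-algebra ($p_i q_j = p_i \wedge q_j$) rather than lattice distributivity, and that you spell out details the paper leaves implicit: that $\calS_e(T) \subseteq L^\infty(T)$, that the summands $\alpha_i p_i$ are pairwise disjoint, and that $\abs{\alpha_i p_i} = \abs{\alpha_i} p_i$.
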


\begin{proof}
    The stated representations for $\gamma x$ and $\abs{x}$ are immediate.
    Moreover, we obtain
    \begin{align*}
        x + y
        & = \sum_{i=1}^n \alpha_i p_i + \sum_{j=1}^m \beta_j q_j
          = \sum_{i=1}^n \alpha_i (p_i \wedge e) + \sum_{j=1}^m \beta_j (e \wedge q_j) \\
        & = \sum_{i=1}^n \sum_{j=1}^m \alpha_i (p_i \wedge q_j) + \sum_{i=1}^n \sum_{j=1}^m \beta_j (p_i \wedge q_j)
          = \sum_{i=1}^n \sum_{j=1}^m (\alpha_i + \beta_j) (p_i \wedge q_j).
        \qedhere
    \end{align*}
\end{proof}

\begin{lemma} \label{thm:simple-integral-well-defined}
    Let $(E,e,T)$ be a $T$-universally complete conditional Riesz triple, $\mu \in \ba(T)$, and $x \in \calS_e(T)$ with two standard representations
    \[
        \sum_{i=1}^n \alpha_i p_i = x = \sum_{j=1}^m \beta_j q_j.
    \]
    Then
    \[
        \sum_{i=1}^n \alpha_i \mu(p_i) = \sum_{j=1}^m \beta_j \mu(q_j).
    \]
\end{lemma}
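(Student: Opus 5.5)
The plan is to pass to the common refinement $\set{p_i \wedge q_j}$ and then use $T$-absolute continuity to compare coefficients on each piece. The refinement step itself is a finite-additivity computation. Since $\sum_j q_j = e$ and the $q_j$ are pairwise disjoint, for each $i$ we have $p_i = p_i \wedge e = \sum_{j=1}^m p_i \wedge q_j$, and the components $p_i \wedge q_j$ are pairwise disjoint. Finite additivity of $\mu$, by induction on the number of summands, gives $\mu(p_i) = \sum_j \mu(p_i \wedge q_j)$, and symmetrically $\mu(q_j) = \sum_i \mu(p_i \wedge q_j)$. Hence it suffices to show, for every pair $(i,j)$, that
\[
    \alpha_i \mu(p_i \wedge q_j) = \beta_j \mu(p_i \wedge q_j).
\]

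Fix $i,j$ and set $r \coloneqq p_i \wedge q_j \in C_e$. The $p_i$ are pairwise disjoint, and $rp_k = 0$ for $k \neq i$ because products of disjoint elements vanish in $E^u$. Since $r$ is a component of $e$ and $r \le p_i$, we get $rp_i = r$. Multiplying $x$ by $r$ in the $f$-algebra $E^u$ (recall $e$ is the algebraic unit) therefore gives $rx = \alpha_i r$, and likewise $rx = \beta_j r$. So with $\gamma \coloneqq \alpha_i - \beta_j \in R(T)$ we have $\gamma r = 0$, equivalently $\abs{\gamma} r = 0$, i.e. $\abs{\gamma} \wedge r = 0$.

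The main step is to transfer this disjointness to $Tr$ and then to $\mu(r)$. Since $T$ is averaging and $\gamma \in R(T)$, we get $\abs{\gamma}\, Tr = T(\abs{\gamma} r) = 0$, so $\abs{\gamma} \perp Tr$ in $R(T)$. If one wants to avoid multiplication here, one can use instead $T(\abs{\gamma}\wedge r) \le \abs{\gamma} \wedge Tr$ together with the averaging property applied to $P_{\abs{\gamma}}(e)$. Either way, $\abs{\gamma}$ lies in the disjoint complement of $B_{Tr}$ within $R(T)$.

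Now $\mu \ll T$ gives $\mu(r) \in B_{Tr}$, so $\abs{\gamma} \perp \mu(r)$, and disjoint elements of the $f$-algebra $R(T)$ have zero product. Thus $\gamma\,\mu(r) = 0$, which is the required identity. I expect this last passage to need the most care: one must keep disjointness in $R(T)$ and in $E$ compatible, which is exactly what \cref{thm:projections-E-vs-R(T)} provides. Summing the identity over $i,j$ and using the two additivity expansions from the first paragraph yields the claim.
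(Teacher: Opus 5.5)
Your proposal is correct and follows essentially the same route as the paper. Both arguments pass to the common refinement $p_i \wedge q_j$ and show $(\alpha_i-\beta_j)(p_i\wedge q_j)=0$. Both then use the averaging property to get $(\alpha_i-\beta_j)\,T(p_i\wedge q_j)=0$, invoke $\mu \ll T$ to conclude $(\alpha_i-\beta_j)\,\mu(p_i\wedge q_j)=0$, and sum by finite additivity.

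Your derivation of $\gamma r = 0$, by multiplying $x$ directly by $r$ in $E^u$, is a slightly cleaner version of the paper's double-sum expansion. The appeal to \cref{thm:projections-E-vs-R(T)} is not actually needed: disjointness in the Riesz subspace $R(T)$ agrees with disjointness in $E$, and the rest of the argument takes place entirely inside $R(T)$.
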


\begin{proof}
    Let $i \in \set{1,\dots,n}$ and $j \in \set{1,\dots,m}$.
    Note that
    \begin{align*}
        \sum_{k=1}^n \sum_{\ell=1}^m \beta_\ell (p_k \wedge q_\ell)
        & = \sum_{\ell=1}^m \beta_\ell (e \wedge q_\ell)
          = \sum_{\ell=1}^m \beta_\ell q_\ell 
          = \sum_{k=1}^n \alpha_k p_k \\
        & = \sum_{k=1}^n \alpha_k (p_k \wedge e)
          = \sum_{k=1}^n \sum_{\ell=1}^m \alpha_k (p_k \wedge q_\ell).
    \end{align*}
    Moreover, for all $k \in \set{1,\dots,n}$ and $\ell \in \set{1,\dots,m}$, one has $(p_k \wedge q_\ell) \wedge (p_i \wedge q_j) = \delta_{ik} \delta_{j\ell} (p_i \wedge q_j)$.
    It follows that
    \begin{align*}
        \alpha_i (p_i \wedge q_j)
        & = \sum_{k=1}^n \sum_{\ell=1}^m \alpha_k \left( (p_k \wedge q_\ell) \wedge (p_i \wedge q_j) \right) \\
        & = \sum_{k=1}^n \sum_{\ell=1}^m \beta_\ell \left( (p_k \wedge q_\ell) \wedge (p_i \wedge q_j) \right)
          = \beta_j (p_i \wedge q_j).
    \end{align*}
    Then $(\alpha_i - \beta_j)(p_i \wedge q_j) = 0$.
    Since $T$ is an averaging operator, it follows that $(\alpha_i - \beta_j)T(p_i \wedge q_j) = 0$, thus $(\alpha_i - \beta_j) \in B_{T(p_i \wedge q_j)}^d$.
    As $\mu(p_i \wedge q_j) \in B_{T(p_i \wedge q_j)}$, we get $(\alpha_i - \beta_j) \mu(p_i \wedge q_j) = 0$ and we conclude that $\alpha_i \mu(p_i \wedge q_j) = \beta_j \mu(p_i \wedge q_j)$.

    Finally, we obtain
    \begin{align*}
        \sum_{i=1}^n \alpha_i \mu(p_i)
        & = \sum_{i=1}^n \alpha_i \mu(p_i \wedge e)
          = \sum_{i=1}^n \alpha_i \mu\left( \sum_{j=1}^m (p_i \wedge q_j) \right) \\
        & = \sum_{i=1}^n \sum_{j=1}^m \alpha_i \mu(p_i \wedge q_j)
          = \sum_{i=1}^n \sum_{j=1}^m \beta_j \mu(p_i \wedge q_j) \\
        & = \sum_{j=1}^m \beta_j \mu\left( \sum_{i=1}^n (p_i \wedge q_j) \right)
          = \sum_{j=1}^m \beta_j \mu(e \wedge q_j)
          = \sum_{j=1}^m \beta_j \mu(q_j).
        \qedhere
    \end{align*}
\end{proof}

In the subsequent example, we demonstrate how \cref{thm:simple-integral-well-defined} fails to be true if the charge $\mu$ is not assumed to be $T$-absolutely continuous.

\begin{example}
    Consider the Riesz space $E = L^1[0,1]$ and the conditional expectation $T = \id_E$ on $E$.
    Then $R(T) = E$.
    Recall from \cref{ex:signed-charge} that $C_\bfone = \dset{\bfone_A}{A \subseteq [0,1] \text{ measurable}}$.
    Consider the function $\varphi \colon \real \to \real, t \mapsto t^2$, and
    \[
        \mu \colon C_\bfone \to E, \quad \bfone_A \mapsto \bfone_{\varphi^{-1}[A]}.
    \]
    Then $\mu \in \ba(E,R(T))$.
    Note that
    \[
        \mu\left( \bfone_{\left[0,\tfrac{1}{2}\right]} \right)
        = \bfone_{\left[0,\tfrac{1}{\sqrt{2}}\right]}
        \not\in B_{\bfone_{\left[0,\tfrac{1}{2}\right]}}
        = B_{T\bfone_{\left[0,\tfrac{1}{2}\right]}},
    \]
    which shows that $\mu$ is not $T$-absolutely continuous.

    The element $x \coloneqq \bfone_{\left[ 0,\tfrac{1}{2} \right]}$ has the two standard representations
    \[
        x = \bfone \cdot \bfone_{\left[0,\tfrac{1}{2}\right]} + \bfzero \cdot \bfone_{\left(\tfrac{1}{2},1\right]} \text{ and } x = \bfone_{\left[0,\tfrac{1}{2}\right]} \cdot \bfone_{\left[0,\tfrac{1}{2}\right]} + \bfzero \cdot \bfone_{\left(\tfrac{1}{2},1\right]},
    \]
    but
    \begin{align*}
        \bfone \cdot \mu\left( \bfone_{\left[0,\tfrac{1}{2}\right]} \right) + \bfzero \cdot \mu\left( \bfone_{\left( \tfrac{1}{2},1 \right]} \right)
        & = \bfone_{\left[0,\tfrac{1}{\sqrt{2}}\right]}, \\
        \bfone_{\left[0,\tfrac{1}{2}\right]} \cdot \mu\left( \bfone_{\left[0,\tfrac{1}{2}\right]} \right) + \bfzero \cdot \mu\left( \bfone_{\left( \tfrac{1}{2},1 \right]} \right)
        & = \bfone_{\left[0,\tfrac{1}{2}\right]} \cdot \bfone_{\left[0,\tfrac{1}{\sqrt{2}}\right]}
          = \bfone_{\left[0,\tfrac{1}{2}\right]}.
    \end{align*}
\end{example}

\begin{definition}
    Let $(E,e,T)$ be a $T$-universally complete conditional Riesz triple and let $\mu \in \ba(T)$.
    We define the map
    \[
        I_\mu \colon \calS_e(T) \to R(T), \quad \sum_{i=1}^n \alpha_i p_i \mapsto \sum_{i=1}^n \alpha_i \mu(p_i).
    \]
    For $x \in \calS_e(T)$, we call $I_\mu(x)$ the \emph{$R(T)$-valued integral of $x$ with respect to $\mu$}.
\end{definition}

\begin{lemma} \label{thm:integral-of-step}
    Let $(E,e,T)$ be a $T$-universally complete conditional Riesz triple and let $\mu \in \ba(T)$.
    Then:
    \begin{enumList}
        \item \label{it:thm:integral-of-step:extends-charge}
        $I_\mu|_{C_e} = \mu$.
        
        \item \label{it:thm:integral-of-step:linear}
        $I_\mu$ is $R(T)$-linear.
        
        \item \label{it:thm:integral-of-step:positive}
        If $\mu$ is positive, then $I_\mu$ is positive.
    \end{enumList}
\end{lemma}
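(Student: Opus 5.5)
All three parts follow from the well-definedness of $I_\mu$ established in \cref{thm:simple-integral-well-defined}: we may compute $I_\mu(x)$ using any standard representation of $x$. We therefore choose convenient standard representations and use the formulas of \cref{thm:step-functions-riesz-submodule}.

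For \ref{it:thm:integral-of-step:extends-charge}, let $p \in C_e$. Then $p = e \cdot p + 0 \cdot (e - p)$ is a standard representation, since $p$ and $e-p$ are disjoint components of $e$ summing to $e$. Hence
\[
    I_\mu(p) = e\,\mu(p) + 0 \cdot \mu(e-p) = \mu(p),
\]
because $e$ is the algebraic unit of $R(T)$. The degenerate case $p = 0$ is covered by $\mu(0) = 0$.

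For \ref{it:thm:integral-of-step:linear}, let $x = \sum_{i=1}^n \alpha_i p_i$ and $y = \sum_{j=1}^m \beta_j q_j$ be standard representations and let $\gamma \in R(T)$.
\begin{enumItem}
    \item \emph{Homogeneity.} $\sum_{i=1}^n \gamma\alpha_i p_i$ is a standard representation of $\gamma x$, so $I_\mu(\gamma x) = \sum_{i=1}^n \gamma\alpha_i\mu(p_i) = \gamma I_\mu(x)$.
    \item \emph{Additivity.} By \cref{thm:step-functions-riesz-submodule}, $x+y = \sum_{i,j}(\alpha_i+\beta_j)(p_i\wedge q_j)$. The components $p_i \wedge q_j$ are pairwise disjoint, and $\sum_{i,j} p_i\wedge q_j = \sum_i p_i \wedge e = e$ by distributivity over finite disjoint sums. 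So this is again a standard representation.
    \item Therefore $I_\mu(x+y) = \sum_{i,j}\alpha_i\mu(p_i\wedge q_j) + \sum_{i,j}\beta_j\mu(p_i\wedge q_j)$.
    \item By finite additivity of $\mu$ (iterated over disjoint components), $\sum_j \mu(p_i\wedge q_j) = \mu(p_i)$ and $\sum_i \mu(p_i\wedge q_j) = \mu(q_j)$. This gives $I_\mu(x)+I_\mu(y)$.
\end{enumItem}

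For \ref{it:thm:integral-of-step:positive}, let $x \geq 0$ with standard representation $\sum_{i=1}^n \alpha_i p_i$. The coefficients need not be positive, so the idea is to replace them by $\alpha_i^+$. By \cref{thm:step-functions-riesz-submodule}, $x = \abs{x} = \sum_{i=1}^n \abs{\alpha_i}p_i$, so \cref{thm:simple-integral-well-defined} gives $I_\mu(x) = \sum_{i=1}^n \abs{\alpha_i}\mu(p_i)$. Since $\abs{\alpha_i} \geq 0$, $\mu(p_i)\geq 0$, and the $f$-algebra product of positive elements is positive, we get $I_\mu(x)\geq 0$.

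The only point requiring care is additivity, specifically verifying that the common refinement $\{p_i\wedge q_j\}$ is a finite partition of $e$. This follows from $\sum_j q_j = e$ and the identity $p\wedge(q+q') = p\wedge q + p\wedge q'$ for disjoint $q, q'$.
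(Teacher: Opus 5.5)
Your proofs of (a) and (b) are essentially the paper's. In (a) you read $I_\mu(p)$ off a standard representation. In (b) you use the common refinement $\sum_{i,j}(\alpha_i+\beta_j)(p_i\wedge q_j)$ together with finite additivity of $\mu$. Your explicit check that $\{p_i\wedge q_j\}$ is a finite partition of $e$ only fills in what the paper takes from \cref{thm:step-functions-riesz-submodule}.

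Part (c) is where you genuinely differ.

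The paper justifies positivity by claiming that a standard representation $\sum_i\alpha_i p_i$ of a positive step function has $\alpha_i\ge 0$ whenever $p_i\neq 0$. With $R(T)$-valued coefficients this is not literally true. If $q\in R(T)$ is a component of $e$ with $0\neq q\neq e$, then $(2q-e)\,q+0\cdot(e-q)=q\ge 0$ is a standard representation, yet $2q-e\not\ge 0$. Positivity of $x$ only yields $\alpha_i^-p_i=0$. One then needs $T$-absolute continuity to pass to $\alpha_i^-\mu(p_i)=0$.

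Your argument writes $x=\abs{x}=\sum_i\abs{\alpha_i}p_i$ and then invokes \cref{thm:simple-integral-well-defined}. This routes exactly that use of absolute continuity through the well-definedness lemma. So your version of (c) is the cleaner and more complete one.

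A minor point: you announce replacing $\alpha_i$ by $\alpha_i^+$ but then use $\abs{\alpha_i}$. Either works, since also $x=x^+=\sum_i\alpha_i^+p_i$.
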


\begin{proof}
    \ref{it:thm:integral-of-step:extends-charge}
    This follows directly from the definition of $I_\mu$.

    \smallskip

    \ref{it:thm:integral-of-step:linear}
    Let $x,y \in \calS_e(T)$ with standard representations $x = \sum_{i=1}^n \alpha_i p_i$ and $y = \sum_{j=1}^m \beta_j q_j$.
    Then $x + y = \sum_{i=1}^n \sum_{j=1}^m (\alpha_i + \beta_j)(p_i \wedge q_j)$ is a standard representation by \cref{thm:step-functions-riesz-submodule} and it follows that
    \begin{align*}
        I_\mu(x + y)
        & = \sum_{i=1}^n \sum_{j=1}^m (\alpha_i + \beta_j) \mu(p_i \wedge q_j) \\
        & = \sum_{i=1}^n \sum_{j=1}^m \alpha_i \mu(p_i \wedge q_j) + \sum_{i=1}^n \sum_{j=1}^m \beta_j \mu(p_i \wedge q_j) \\
        & = \sum_{i=1}^n \alpha_i \mu(p_i) + \sum_{j=1}^m \beta_j \mu(q_j)
          = I_\mu(x) + I_\mu(y).
    \end{align*}
    Let $\gamma \in R(T)$.
    Then $\gamma x = \sum_{i=1}^n \gamma \alpha_i p_i$ is a standard representation by \cref{thm:step-functions-riesz-submodule} and, as $\gamma \alpha_i \in R(T)$ for all $i \in \set{1,\dots,n}$, we obtain 
    \[
        I_\mu(\gamma x)
        = \sum_{i=1}^n \gamma \alpha_i \mu(p_i)
        = \gamma \sum_{i=1}^n \alpha_i \mu(p_i)
        = \gamma I_\mu(x).
    \]

    \smallskip

    \ref{it:thm:integral-of-step:positive}
    This follows from the fact that a step function $x \in \calS_e(T)$ with standard representation $\sum_{i=1}^n \alpha_i p_i$ is positive if and only if $\alpha_i \geq 0$ holds for all $i \in \set{1,\dots,n}$ with $p_i \neq 0$.
\end{proof}

\begin{proposition} \label{thm:integral-of-step-riesz-hom}
    Let $(E,e,T)$ be a $T$-universally complete conditional Riesz triple.
    \begin{enumList}
        \item \label{it:thm:integral-of-step-riesz-hom:well-defined}
        For every $\mu \in \ba(T)$, we have $I_\mu \in \rmL^\rmr(\calS_e(T),R(T))$.
        
        \item \label{it:thm:integral-of-step-riesz-hom:riesz}
        The map
        \[
            I \colon \ba(T) \to \rmL^\rmr(\calS_e(T),R(T)), \quad \mu \mapsto I_\mu,
        \]
        is an $R(T)$-linear Riesz homomorphism.
    \end{enumList}
\end{proposition}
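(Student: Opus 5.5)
For \ref{it:thm:integral-of-step-riesz-hom:well-defined} I will use the Jordan decomposition in $\ba(C_e,R(T))$. Since $\ba(T)$ is a band, and hence an ideal, in $\ba(C_e,R(T))$ by the Lebesgue decomposition theorem, $\mu \in \ba(T)$ implies $\mu^+,\mu^- \in \ba(T)_+$. By \cref{thm:integral-of-step}, $I_{\mu^+}$ and $I_{\mu^-}$ are positive and $R(T)$-linear. Linearity of $\mu \mapsto I_\mu$ is immediate from the defining formula, because $I_\mu$ is well defined by \cref{thm:simple-integral-well-defined}. Hence $I_\mu = I_{\mu^+} - I_{\mu^-}$ is regular. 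The same observation already gives $R(T)$-linearity of $I$: for $\gamma \in R(T)$ we have $I_{\gamma\mu}(\sum \alpha_i p_i) = \sum \alpha_i \gamma\mu(p_i) = \gamma I_\mu(\sum\alpha_i p_i)$. Note that $\gamma\mu \in \ba(T)$ because $B_{Tp}$ is an ideal.

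For \ref{it:thm:integral-of-step-riesz-hom:riesz}, $I$ is positive by \cref{thm:integral-of-step}\ref{it:thm:integral-of-step:positive}. It therefore suffices to show $\abs{I_\mu} = I_{\abs{\mu}}$, equivalently $(I_\mu)^+ = I_{\mu^+}$. By \cref{thm:riesz-kantorovich}, applied with $\calS_e(T)$ as domain and $R(T)$ Dedekind complete, for $x \in \calS_e(T)_+$ we have
\[
    \abs{I_\mu}(x) = \sup\Dset{\abs{I_\mu(y)}}{y \in \calS_e(T), \abs{y} \leq x}.
\]
The inequality $\abs{I_\mu} \leq I_{\abs{\mu}}$ follows from $\abs{I_\mu(y)} \leq I_{\abs{\mu}}(\abs{y}) \leq I_{\abs{\mu}}(x)$. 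The first step uses $I_\mu = I_{\mu^+}-I_{\mu^-}$, positivity of $I_{\mu^\pm}$, and $\abs{I_{\mu^\pm}(y)} \leq I_{\mu^\pm}(\abs y)$.

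For the reverse inequality, take a standard representation $x = \sum_i \alpha_i p_i$ with $\alpha_i \geq 0$. By additivity it suffices to show $I_{\abs{\mu}}(\alpha p) \leq \abs{I_\mu}(\alpha p)$ for a single $\alpha \in R(T)_+$ and $p \in C_e$. By \eqref{eq:thm:o-bounded-signed-char-riesz-space:mod}, $\abs{\mu}(p) = \sup_{q \leq p}(\mu(q) - \mu(p-q))$. For each component $q \leq p$, the element $y = \alpha q - \alpha(p-q)$ satisfies $\abs{y} = \alpha p$ and $I_\mu(y) = \alpha(\mu(q)-\mu(p-q))$. This gives $\abs{I_\mu}(\alpha p) \geq \alpha(\mu(q) - \mu(p-q))$ for all $q$. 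Taking the supremum over $q$ and using that multiplication by $\alpha \in R(T)_+$ commutes with suprema in the $f$-algebra $R(T)$ (as in \eqref{eq:thm:ba-norm:pos-hom}), we get $\abs{I_\mu}(\alpha p) \geq \alpha\abs{\mu}(p) = I_{\abs{\mu}}(\alpha p)$.

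The main obstacle is this last step: justifying that $\alpha \cdot \sup S = \sup(\alpha S)$ in $R(T)$ for $\alpha \geq 0$. This holds because multiplication by a positive element of a universally complete $f$-algebra is an order continuous positive operator, and because on the band $B_\alpha$ it is injective, while off $B_\alpha$ both sides vanish. If needed, I would cite order continuity of multiplication operators in Archimedean $f$-algebras. Only the inequality $\alpha\sup S \leq \sup \alpha S$ is actually required, and it follows from order continuity applied to the upward directed set of finite suprema.
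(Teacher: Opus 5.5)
Your proposal is correct and follows the paper's route: the Jordan decomposition $I_\mu = I_{\mu^+} - I_{\mu^-}$ for (a), and Riesz--Kantorovich combined with the modulus formula for charges for (b). Your explicit handling of $R(T)$-coefficients is more careful than the paper. You bound $\abs{I_\mu} \leq I_{\abs{\mu}}$ via positivity, use the test elements $\alpha q - \alpha(p-q)$, and use order continuity of multiplication by $\alpha$; the paper only checks $\abs{I_\mu}(p) = \abs{\mu}(p)$ for components $p \in C_e$ and tacitly extends this to all of $\calS_e(T)$.
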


\begin{proof}
    \ref{it:thm:integral-of-step-riesz-hom:well-defined}
    If $\mu \in \ba(T)$, then a calculation involving the decomposition $\mu = \mu^+ - \mu^-$ and \cref{thm:integral-of-step}\ref{it:thm:integral-of-step:extends-charge} and \ref{it:thm:integral-of-step:linear} shows that $I_\mu = I_{\mu^+} - I_{\mu^-}$.
    Hence, $I_\mu \in \rmL^\rmr(\calS_e(T),R(T))$.

    \ref{it:thm:integral-of-step-riesz-hom:riesz}
    It is straightforward that $I$ is $R(T)$-linear.
    By \cref{thm:riesz-kantorovich} and \cref{thm:o-bounded-signed-char-riesz-space}, we have
    \begin{align*}
        \abs{I_\mu}(p)
        & = \sup\Dset{I_\mu(q) + I_{\mu}(p - q)}{q \in C_e, q \leq p} \\
        & = \sup\Dset{\mu(q) + \mu(p - q)}{q \in C_e, q \leq p}
          = \abs{\mu}(p)
          = I_{\abs{\mu}}(p)
    \end{align*}
    for all $p \in C_e$, thus $I_{\abs{\mu}} = \abs{I_\mu}$.
    Hence, $I$ is a Riesz homomorphism. 
\end{proof}

\begin{definition}
    Let $(E,e,T)$ be a $T$-universally complete conditional Riesz triple.
    For $f \in L^\infty_+(T)$ and $\mu \in \ba_+(T)$, we define
    \[
        \int f \,\rmd\mu \coloneqq \sup\Dset{I_\mu(g)}{g \in \calS_e(T), 0 \leq g \leq f}.
    \]
    For $f \in L^\infty(T)$ and $\mu \in \ba(T)$, we then define
    \[
        \int f \,\rmd \mu \coloneqq \int f^+ \,\rmd\mu^+ - \int f^- \,\rmd\mu^+ - \int f^+ \,\rmd\mu^- + \int f^- \,\rmd\mu^-
    \]
    and call $\int f \,\rmd\mu$ the \emph{$R(T)$-valued integral of $f$ with respect to $\mu$}.
\end{definition}

\begin{proposition} \label{thm:conditional-sombrero}
    Let $(E,e,T)$ be a $T$-universally complete conditional Riesz triple and let $\mu \in \ba(T)_+$.
    \begin{enumList}
        \item \label{it:thm:conditional-sombrero:continuity}
        For every $f \in L^\infty(T)_+$ and every sequence $(s_n)_{n \in \bbN}$ in $\calS_e(T)_+$ that converges $u$-uniformly to $f$ for some $u \in R(T)_+$, the sequence $(I_\mu(s_n))$ converges $(u\mu(e))$-uniformly to $\int f \,\rmd\mu$.
        
        \item \label{it:thm:conditional-sombrero:approximation-exists}
        For all $f \in L^\infty(T)_+$, there exists an increasing sequence $(s_n)_{n \in \bbN}$ in $\calS_e(T)_+$ that converges $u$-uniformly to $f$ for some $u \in R(T)_+$.     
    \end{enumList}
\end{proposition}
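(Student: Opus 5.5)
I will prove (b) first, since (a) relies on its conclusion only through the existence of approximants. Let $f \in L^\infty(T)_+$ and choose $u \in R(T)_+$ with $f \leq u$. By Freudenthal's theorem (\cref{thm:freudenthal}\ref{it:thm:freudenthal:uniform}), applied in the Dedekind complete space $E$ with the element $u$ in place of $e$, there is an increasing sequence of positive $u$-step functions $t_n = \sum_i \lambda_i r_i$ converging $u$-uniformly to $f$. Here the $r_i$ are components of $u$ and $\lambda_i \in \real_+$. Each component $r$ of $u$ can be written as $r = u \cdot P_r(e)$ in the $f$-algebra $E^u$, and $P_r(e) \in C_e$. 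The components $P_{r_i}(e)$ are pairwise disjoint when the $r_i$ are, by \cref{thm:space-of-charges-independent-of-w-o-unit}\ref{it:thm:space-of-charges-independent-of-w-o-unit:bijection-between-components} restricted to the band of $u$. It follows that $t_n = \sum_i (\lambda_i u) P_{r_i}(e) \in \calS_e(T)_+$, since $\lambda_i u \in R(T)$.

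If one prefers to avoid the fact that $u$ is only a weak unit of $B_u$, there is an alternative. Replace $u$ by $u + e$, which is a weak order unit with $f \leq u+e$, and run the same argument with $u+e$. Either way, monotonicity and $u$-uniform convergence are inherited, which gives (b).

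For (a), I first record a basic estimate. For $s \in \calS_e(T)_+$ with $s \leq v$ for some $v \in R(T)_+$, write $s = \sum_i \alpha_i p_i$ in standard representation. Then $\alpha_i p_i \leq v p_i$, and this forces $\alpha_i \mu(p_i) \leq v\mu(p_i)$. The justification is $T$-absolute continuity: the inequality $(\alpha_i - v)^+ p_i = 0$ gives $(\alpha_i - v)^+ T p_i = 0$ by the averaging property, so $(\alpha_i-v)^+$ annihilates $B_{Tp_i} \ni \mu(p_i)$, exactly as in the proof of \cref{thm:simple-integral-well-defined}. Summing over $i$ yields
\[
I_\mu(s) \leq v\,\mu(e).
\]
Now fix $\varepsilon > 0$ and $N$ such that $f - \varepsilon u \leq s_n \leq f + \varepsilon u$ for $n \geq N$. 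For every step function $g$ with $0 \leq g \leq f$, I would like to compare $I_\mu(g)$ with $I_\mu(s_n)$. Since $\calS_e(T)$ is a Riesz space (\cref{thm:step-functions-riesz-submodule}), the element $(g - s_n)^+$ is a positive step function bounded by $(f - s_n)^+ \leq \varepsilon u$. Hence
\[
I_\mu(g) \leq I_\mu(s_n) + I_\mu((g-s_n)^+) \leq I_\mu(s_n) + \varepsilon u\mu(e).
\]
Taking the supremum over $g$ gives $\int f\,\rmd\mu \leq I_\mu(s_n) + \varepsilon u \mu(e)$.

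Conversely, $(s_n - \varepsilon u)^+$ is again a step function: $\varepsilon u \in R(T)$ is the step function $\varepsilon u \cdot e$, so this follows from \cref{thm:step-functions-riesz-submodule}. It satisfies $0 \leq (s_n - \varepsilon u)^+ \leq f$. Using $s_n \leq (s_n-\varepsilon u)^+ + \varepsilon u$, positivity of $I_\mu$, and the basic estimate, I get $I_\mu(s_n) \leq \int f\,\rmd\mu + \varepsilon u\mu(e)$. Combining the two bounds gives $|I_\mu(s_n) - \int f\,\rmd\mu| \leq \varepsilon u\mu(e)$ for all $n \geq N$, which is $(u\mu(e))$-uniform convergence.

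The main obstacle is the basic estimate: it is the only place where $T$-absolute continuity is needed to pass from an order inequality between step functions to an inequality of integrals. The bookkeeping in (b) showing that $u$-components turn into $R(T)$-multiples of $e$-components also requires some care.
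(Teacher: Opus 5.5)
Your proof is correct. Part (a) is essentially the paper's sandwich argument. The lower bound via $(s_n-\varepsilon u)^+\in\calS_e(T)$ with $0\le (s_n-\varepsilon u)^+\le f$ is identical. For the upper bound you pass through $(g-s_n)^+\le\varepsilon u$, while the paper uses $g\le s_n+\varepsilon u$ and monotonicity of $I_\mu$ directly; the two are equivalent.

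Your ``basic estimate'' is just positivity of $I_\mu$ (\cref{thm:integral-of-step}\ref{it:thm:integral-of-step:positive}) applied to $s\le v\cdot e$, together with $I_\mu(v\cdot e)=v\mu(e)$, so it is not really a separate obstacle. Your derivation of it is still worth keeping. The paper justifies positivity of $I_\mu$ by saying that a positive step function has $\alpha_i\ge 0$ whenever $p_i\neq 0$. That is not literally true with $R(T)$-coefficients: take $T=\id$ on $L^0[0,1]$, $\alpha=-\bfone_{[1/2,1]}$, $p=\bfone_{[0,1/2]}$. Your argument, $\alpha_i^-p_i=0\Rightarrow\alpha_i^-Tp_i=0\Rightarrow\alpha_i^-\mu(p_i)=0$, is exactly what makes that positivity valid. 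Also, (a) does not depend on (b), so there is no need to prove (b) first.

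For (b) you take a different route. The paper picks $\alpha\in R(T)_+$ with $f\le\alpha$ and uses the canonical partial inverse to normalize to $0\le\alpha^{-1}f\le P_\alpha(e)\le e$. It then applies \cref{thm:freudenthal} with respect to the fixed unit $e$, sets $s_n=\alpha t_n$, and uses \cref{thm:prod-w/-band-proj-invariant} to get $\alpha\alpha^{-1}f=P_\alpha(e)f=f$.

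You instead apply \cref{thm:freudenthal} directly relative to $u$, which the theorem allows for any $u\in E_+\setminus\{0\}$ (the case $u=0$ is trivial). You then rewrite each component $r$ of $u$ as $u\,P_r(e)$ with $P_r(e)\in C_e$. This avoids partial inverses entirely. The price is the identity $r=P_r(u)=u\,P_r(e)$ in $E^u$, which is the same $f$-algebra fact (band projections act as multiplication by $P(e)$) that underlies \cref{thm:prod-w/-band-proj-invariant}.

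Your detour through \cref{thm:space-of-charges-independent-of-w-o-unit}, or through replacing $u$ by $u+e$, is unnecessary. If $r_i\wedge r_j=0$, then $P_{r_i}(e)\in B_{r_i}$ and $P_{r_j}(e)\in B_{r_j}$ lie in disjoint bands, so they are disjoint.
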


\begin{proof}
    \ref{it:thm:conditional-sombrero:continuity}
    Since $s_n \uparrow f$ $u$-uniformly, there exists a sequence $(\eps_n)_{n \in \bbN}$ of positive real numbers such that $\eps_n \downarrow 0$ and $\abs{f - s_n} \leq \eps_n u$ for all $n \in \nat$.
    For all $n \in \bbN$, we then have $-\eps_n u + s_n \leq f \leq \eps_n u + s_n$.
    In view of \cref{thm:step-functions-riesz-submodule}, we have $(-\eps_n u + s_n)^+ \in \calS_e(T)$.
    Moreover, as $0 \leq (-\eps_n u + s_n)^+ \leq f^+ = f$, we immediately obtain
    \[
        -\eps_n u \mu(e) + I_\mu(s_n)
        = I_\mu(-\eps_n u + s_n)
        \leq I_\mu((-\eps_n u + s_n)^+)
        \leq \int f \,\rmd\mu.
    \]
    On the other hand, for every $g \in \calS_e(T)$ with $0 \leq g \leq f$, we have $g \leq \eps_n u + s_n$, thus $I_\mu(g) \leq I_\mu(\eps_n u + s_n) = \eps_n u \mu(e) + I_\mu(s_n)$ and so 
    \[
        \int f \,\rmd\mu \leq \eps_n u \mu(e) + I_\mu(s_n).
    \]
    We conclude that
    \[
        -\eps_n u \mu(e) \leq \int f \,\rmd\mu - I_\mu(s_n) \leq \eps_n u \mu(e)
    \]
    holds for all $n \in \bbN$, thus $(I_\mu(s_n))$ converges $(u\mu(e))$-uniformly to $\int f \,\rmd\mu$.
    
    \ref{it:thm:conditional-sombrero:approximation-exists}
    Let $\alpha \in R(T)_+$ be such that $f \leq \alpha$.
    Then $0 \leq \alpha^{-1} f \leq \alpha^{-1} \alpha = P_\alpha(e) \leq e$, so by \cref{thm:freudenthal}\ref{it:thm:freudenthal:uniform} there exists an increasing sequence $(t_n)_{n \in \nat}$ of positive $e$-step functions with $\real$-coefficients that converges $e$-uniformly to $\alpha^{-1} f$.
    This implies that the sequence $(s_n)_{n \in \nat}$ in $\calS_e(T)$ defined by $s_n \coloneqq \alpha t_n$ converges $(\alpha e)$-uniformly to $\alpha \alpha^{-1} f = P_\alpha(e) f$.
    Since $f \leq \alpha$, it follows from \cref{thm:prod-w/-band-proj-invariant} that $P_\alpha(e) f = f$, which implies the claim.
\end{proof}

\begin{proposition} \label{thm:integral}
    Let $(E,e,T)$ be a $T$-universally complete conditional Riesz triple and let $\mu \in \ba(T)$.
    \begin{enumList}
        \item \label{it:thm:integral:extends-simple-integral}
        For all $f \in \calS_e(T)$, we have $\int f \,\rmd\mu = I_\mu(f)$.
        
        \item \label{it:thm:integral:linear}
        $\int \cdot \,\rmd\mu \colon L^\infty(T) \to R(T)$ is $R(T)$-linear.
        
        \item \label{it:thm:integral:positive}
        If $\mu$ is positive, then $\int \cdot \,\rmd\mu$ is positive.
    \end{enumList} 
\end{proposition}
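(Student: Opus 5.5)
The plan is to treat the positive case $\mu \in \ba(T)_+$ first. Here the integral is a supremum over step functions below $f$, and \cref{thm:conditional-sombrero} turns this supremum into a $u$-uniform limit. We then pass to general $\mu$ through the decomposition $\mu = \mu^+ - \mu^-$ used in the definition.

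\textbf{Step 1 (positive $\mu$, positive $f$).} Fix $\mu \in \ba(T)_+$. For $f \in \calS_e(T)_+$, the constant sequence $s_n = f$ converges $u$-uniformly to $f$ for any $u$. By \cref{thm:conditional-sombrero}\ref{it:thm:conditional-sombrero:continuity}, $I_\mu(f) = \int f\,\rmd\mu$.

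Additivity on $L^\infty(T)_+$ goes the same way. Given $f,g \in L^\infty(T)_+$, choose approximating sequences $s_n \to f$ ($u$-uniformly) and $t_n \to g$ ($v$-uniformly) by \cref{thm:conditional-sombrero}\ref{it:thm:conditional-sombrero:approximation-exists}. Then $s_n + t_n \in \calS_e(T)_+$ converges $(u+v)$-uniformly to $f+g$. Using \cref{thm:conditional-sombrero}\ref{it:thm:conditional-sombrero:continuity} three times together with the additivity of $I_\mu$ from \cref{thm:integral-of-step}\ref{it:thm:integral-of-step:linear}, we get $\int (f+g)\,\rmd\mu = \int f\,\rmd\mu + \int g\,\rmd\mu$, because relatively uniform limits in the Archimedean space $R(T)$ are unique.

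For $\gamma \in R(T)_+$, the sequence $\gamma s_n$ lies in $\calS_e(T)_+$ and converges $(\gamma u)$-uniformly to $\gamma f$. Since $I_\mu(\gamma s_n) = \gamma I_\mu(s_n)$, and multiplication by $\gamma$ preserves $u\mu(e)$-uniform convergence (it becomes $\gamma u \mu(e)$-uniform), we obtain $\int \gamma f\,\rmd\mu = \gamma\int f\,\rmd\mu$. Positivity, i.e.\ part \ref{it:thm:integral:positive}, is immediate, since a supremum of the values $I_\mu(g)$ over $0 \le g \le f$ includes $I_\mu(0) = 0$.

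\textbf{Step 2 (extension to all $f$, still $\mu \geq 0$).} Define $J(f) = \int f^+\,\rmd\mu - \int f^-\,\rmd\mu$. The usual argument applies: if $f = a - b$ with $a,b \ge 0$, then $f^+ + b = f^- + a$, and additivity from Step 1 gives $J(f) = \int a\,\rmd\mu - \int b\,\rmd\mu$. This yields additivity of $J$.

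For $R(T)$-homogeneity with general $\gamma \in R(T)$, write $\gamma = \gamma^+ - \gamma^-$. Then $\gamma f = \gamma^+ f^+ + \gamma^- f^- - (\gamma^+ f^- + \gamma^- f^+)$, a difference of positive elements of $L^\infty(T)$, using that $L^\infty(T)$ is an $R(T)$-module. Applying the representation-independence just shown, together with positive homogeneity, gives $J(\gamma f) = \gamma J(f)$.

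\textbf{Step 3 (general $\mu$ and part \ref{it:thm:integral:extends-simple-integral}).} By definition, $\int f\,\rmd\mu = J_{\mu^+}(f) - J_{\mu^-}(f)$, so $R(T)$-linearity transfers from Step 2. For $f \in \calS_e(T)$, note that $f^\pm \in \calS_e(T)_+$ by \cref{thm:step-functions-riesz-submodule}. Step 1 then gives $\int f\,\rmd\mu = I_{\mu^+}(f) - I_{\mu^-}(f)$. This equals $I_\mu(f)$ because $I_{\mu^+} - I_{\mu^-} = I_\mu$, by \cref{thm:integral-of-step-riesz-hom}.

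The main obstacle is the bookkeeping in Step 1. One must check that the regulators behave under sums and $R(T)$-multiples, and that relatively uniform limits in $R(T)$ are unique, which holds since $R(T)$ is Archimedean. Everything else is formal.
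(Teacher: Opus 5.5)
Your proof is correct, and its skeleton matches the paper's: positive $\mu$ first, additivity on $L^\infty(T)_+$ via \cref{thm:conditional-sombrero} and uniqueness of relatively uniform limits, then signed $f$, then $\mu=\mu^+-\mu^-$.

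The genuine difference is the $R(T)_+$-homogeneity. The paper derives it directly from the defining supremum by proving the set identity
\[
\{I_\mu(g) : g\in\calS_e(T),\ 0\le g\le \alpha f\}=\alpha\,\{I_\mu(h) : h\in\calS_e(T),\ 0\le h\le f\}.
\]
For the inclusion $\subseteq$, it divides $g$ by $\alpha$ using the canonical partial inverse $\alpha^{-1}$, and uses \cref{thm:prod-w/-band-proj-invariant} to get $P_\alpha(e)g=g$. Passing to suprema then uses that multiplication by $\alpha\ge 0$ preserves suprema in $R(T)$, which holds because it is an orthomorphism.

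You instead multiply an approximating sequence and its regulator by $\gamma$, and reuse \cref{thm:conditional-sombrero}\ref{it:thm:conditional-sombrero:continuity}. So additivity and homogeneity come from one mechanism, and you need neither the partial-inverse manipulation at this point nor order continuity of multiplication. The partial inverse is only relocated, though: it is what produces the approximants in \cref{thm:conditional-sombrero}\ref{it:thm:conditional-sombrero:approximation-exists}. In return, the paper's argument is self-contained at the level of the supremum definition.

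You also spell out what the paper calls straightforward: independence of the decomposition $f=a-b$, homogeneity for signed $\gamma$, and part (a) for signed $f$ and $\mu$. The only thing you leave implicit is that $\mu^\pm\in\ba(T)$, which holds because $\ba(T)$ is a band.
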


\begin{proof}
    \ref{it:thm:integral:extends-simple-integral}
    This is straightforward.

    \smallskip

    \ref{it:thm:integral:linear}
    First, suppose that $\mu$ is positive.
    We show that the integral is $R(T)_+$-homo\-geneous.
    Let $f \in L^\infty(T)_+$ and $\alpha \in R(T)_+$.
    In order to conclude that $\int \alpha f \,\rmd\mu = \alpha \int f \,\rmd\mu$, it suffices to show that
    \begin{equation} \label{eq:thm:integral:linear:pos-hom}
        \Dset{I_\mu(g)}{g \in \calS_e(T), 0 \leq g \leq \alpha f} = \alpha \Dset{I_\mu(h)}{h \in \calS_e(T), 0 \leq h \leq f}.
    \end{equation}
    Note that, for all $h \in \calS_e(T)$ with $0 \leq h \leq f$, one has $0 \leq \alpha h \leq \alpha f$ and $\alpha I_\mu(h) = I_{\mu}(\alpha h)$.
    This shows the inclusion $(\supseteq)$.
    For the converse inclusion, let $g \in \calS_e(T)$ be such that $0 \leq g \leq \alpha f$.
    By \cref{thm:prod-w/-band-proj-invariant}, we have $P_\alpha(e) g = g$. 
    Then $\alpha^{-1} g \in \calS_e(T)$,
    \[
        0
        \leq \alpha^{-1} g
        \leq \alpha^{-1} \alpha f
        = P_\alpha(e) f
        \leq f,
    \]
    and
    \[
        \alpha I_\mu(\alpha^{-1} g)
        = I_\mu(\alpha \alpha^{-1} g)
        = I_\mu(P_\alpha(e)g)
        = I_\mu(g).
    \]
    Hence, \eqref{eq:thm:integral:linear:pos-hom} is shown and we conclude that the integral is $R(T)_+$-homogeneous.

    By \cref{thm:integral-of-step}\ref{it:thm:integral-of-step:linear}, $I_\mu$ is additive.
    Then it is a straightforward application of \cref{thm:conditional-sombrero} that the integral is additive on $L^\infty(T)_+$.
    It follows that the integral is $R(T)$-linear.

    If $\mu$ is not necessarily positive, the $R(T)$-linearity follows from the decomposition $\int \cdot \,\rmd\mu = \int \cdot \,\rmd\mu^+ - \int \cdot \,\rmd\mu^-$.

    \smallskip

    \ref{it:thm:integral:positive}
    From \cref{thm:integral-of-step}\ref{it:thm:integral-of-step:positive}, the result follows.
\end{proof}

Recall from \cref{thm:integral-of-step-riesz-hom} that the integral of elements from $\calS_e(T)$ describes an $R(T)$-linear Riesz homomorphism.
Next, we show that a similar result holds for the integral of elements from $L^\infty(T)$.

\begin{proposition} \label{thm:integral-riesz-hom}
    Let $(E,e,T)$ be a $T$-universally complete conditional Riesz triple.
    \begin{enumList}
        \item \label{it:thm:integral-riesz-hom:well-defined}
        For every $\mu \in \ba(T)$, one has $\int \cdot \,\rmd \mu \in \rmL^\rmr(L^\infty(T),R(T))$.
        
        \item \label{it:thm:integral-riesz-hom:riesz}
        The map
        \[
            J \colon \ba(T) \to \rmL^\rmr(L^\infty(T),R(T)), \quad \mu \mapsto \left( J_\mu \colon f \mapsto \int f \,\rmd\mu \right),
        \]
        is an $R(T)$-linear Riesz homomorphism.
    \end{enumList}
\end{proposition}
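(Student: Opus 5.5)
For part (a), I will first treat $\mu \in \ba(T)_+$. By \cref{thm:integral}\ref{it:thm:integral:linear} and \ref{it:thm:integral:positive}, $J_\mu = \int \cdot \,\rmd\mu$ is then a positive linear operator from $L^\infty(T)$ to $R(T)$, so it lies in $\rmL^\rmr(L^\infty(T),R(T))$. For general $\mu \in \ba(T)$, the definition of the integral gives, for $f \in L^\infty(T)$,
\[
    \int f \,\rmd\mu = \int f \,\rmd\mu^+ - \int f \,\rmd\mu^-,
\]
using additivity of $\int \cdot\,\rmd\mu^\pm$ on $L^\infty(T)$. Here $\mu^\pm \in \ba(T)_+$ because $\ba(T)$ is a band. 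Hence $J_\mu = J_{\mu^+} - J_{\mu^-}$ is regular. The same decomposition shows that $J$ is linear: for positive charges, $J_{\mu+\nu} = J_\mu + J_\nu$ follows from \cref{thm:conditional-sombrero}\ref{it:thm:conditional-sombrero:continuity}, since the step-function approximants $s_n$ work simultaneously for $\mu$, $\nu$ and $\mu+\nu$, and $I$ is additive. Linearity on all of $\ba(T)$ then follows by the usual Jordan-decomposition argument. $R(T)$-linearity in $\mu$ is obtained the same way from $I_{\gamma\mu}=\gamma I_\mu$, first for $\gamma \in R(T)_+$, $\mu \geq 0$, then in general.

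For part (b), it remains to show $\abs{J_\mu} = J_{\abs{\mu}}$. The inequality $\abs{J_\mu} \leq J_{\abs{\mu}}$ is immediate: for $\abs{g} \leq f$ we have
\[
    \abs{J_\mu(g)} \leq J_{\mu^+}(\abs{g}) + J_{\mu^-}(\abs{g}) \leq J_{\abs{\mu}}(f).
\]
Then \cref{thm:riesz-kantorovich} gives $\abs{J_\mu}(f) \leq J_{\abs{\mu}}(f)$ for $f \geq 0$.

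For the reverse inequality, I will use $J_\mu|_{\calS_e(T)} = I_\mu$ (\cref{thm:integral}\ref{it:thm:integral:extends-simple-integral}) and \cref{thm:integral-of-step-riesz-hom}, which give $\abs{I_\mu} = I_{\abs{\mu}}$ on $\calS_e(T)$. Fix $f \in L^\infty(T)_+$ and choose, by \cref{thm:conditional-sombrero}\ref{it:thm:conditional-sombrero:approximation-exists}, step functions $0 \leq s_n \uparrow f$ converging $u$-uniformly. For each $n$, the Riesz--Kantorovich formula in $\rmL^\rmr(\calS_e(T),R(T))$ gives
\[
    I_{\abs{\mu}}(s_n) = \abs{I_\mu}(s_n) = \sup\Dset{\abs{I_\mu(g)}}{g \in \calS_e(T), \abs{g} \leq s_n}.
\]
Each such $g$ satisfies $\abs{g} \leq f$, so $\abs{I_\mu(g)} = \abs{J_\mu(g)} \leq \abs{J_\mu}(f)$, and therefore $I_{\abs{\mu}}(s_n) \leq \abs{J_\mu}(f)$. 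Letting $n \to \infty$, \cref{thm:conditional-sombrero}\ref{it:thm:conditional-sombrero:continuity} yields $J_{\abs{\mu}}(f) \leq \abs{J_\mu}(f)$ by uniform convergence, since $R(T)$ is Archimedean. Hence $J$ preserves moduli and is a Riesz homomorphism.

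The main obstacle is this last comparison between the modulus taken in $\rmL^\rmr(\calS_e(T),R(T))$ and the modulus taken in $\rmL^\rmr(L^\infty(T),R(T))$. It is handled by the observation that test elements from $\calS_e(T)$ are admissible in the larger supremum, combined with the uniform approximation of \cref{thm:conditional-sombrero}.
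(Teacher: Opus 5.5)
Your proposal is correct and follows essentially the same route as the paper. The paper argues with $\mu^+$ and $J_\mu^+$ where you use $\abs{\mu}$ and $\abs{J_\mu}$: positivity of $J$ gives one inequality, and the other comes from testing the Riesz--Kantorovich supremum for $I_\mu$ on step functions below $s_n \le f$, using that $I$ is a Riesz homomorphism on $\calS_e(T)$, and passing to the uniform limit via \cref{thm:conditional-sombrero}; your explicit argument for the $R(T)$-linearity of $J$ (simultaneous step-function approximation plus the Jordan decomposition) just spells out what the paper states in one line.
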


\begin{proof}
    \ref{it:thm:integral-riesz-hom:well-defined}
    For all $\mu \in \ba(T)$, we have the decomposition $\int \cdot \,\rmd\mu = \int \cdot \,\rmd\mu^+ - \int \cdot \,\rmd\mu^-$ and the integrals with respect to the charges $\mu^+$ and $\mu^-$ are positive.
    Hence, $\int \cdot \mu \in \rmL^\rmr(L^\infty(T),R(T))$.

    \smallskip

    \ref{it:thm:integral-riesz-hom:riesz}
    From \cref{thm:integral-of-step-riesz-hom} and \cref{thm:conditional-sombrero}, it follows that $J$ is $R(T)$-linear.
    It remains to show that $J$ is a Riesz homomorphism.

    We denote by
    \[
        I \colon \ba(T) \to \rmL^\rmr(\calS_e(T),R(T)), \quad \mu \mapsto I_\mu,
    \]
    the $R(T)$-linear Riesz homomorphism from \cref{thm:integral-of-step-riesz-hom}.
    Recall from \cref{thm:integral}\ref{it:thm:integral:extends-simple-integral} that $J(\mu)|_{\calS_e(T)} = I_\mu$.

    Let $\mu \in \ba(T)$.
    Since $J$ is positive by \cref{thm:integral}\ref{it:thm:integral:positive}, we have $J_\mu^+ \leq J_{\mu^+}$.
    For the converse inequality, let $f \in L^\infty(T)_+$.
    By \cref{thm:conditional-sombrero}\ref{it:thm:conditional-sombrero:approximation-exists}, there exists an increasing sequence $(f_n)_{n \in \nat}$ in $\calS_e(T)_+$ with $f_n \uparrow f$ $u$-uniformly for some $u \in R(T)_+$.
    Fix $n \in \nat$ and let $g_n \in \calS_e(T)_+$ with $g_n \leq f_n$.
    Then we have $g_n \leq f$, thus, from \cref{thm:riesz-kantorovich},
    \[
        I_\mu(g_n)
        = J_\mu(g_n)
        \leq \sup\Dset{J_\mu(g)}{g \in L^\infty(T)_+, g \leq f}
        = J_\mu^+(f).
    \]
    Taking the supremum over all $g_n$ yields
    \[
        I_\mu^+(f_n)
        = \sup\Dset{I_\mu(g_n)}{g_n \in \calS_e(T)_+, g_n \leq f_n}
        \leq J_\mu^+(f).
    \]
    By \cref{thm:integral-of-step-riesz-hom}, $I$ is a Riesz homomorphism, thus $I_{\mu^+}(f_n) = I_\mu+(f_n) \leq J_\mu^+(f)$.
    From \cref{thm:conditional-sombrero}\ref{it:thm:conditional-sombrero:continuity}, taking the $(\mu(e)u)$-uniform limit as $n \to \infty$ shows that $J_{\mu^+}(f) \leq J_\mu^+(f)$.
    Hence, $J_{\mu^+} \leq J_\mu^+$.
    We conclude that $J$ is a Riesz homomorphism.
\end{proof}

\section{The \texorpdfstring{$T$}{T}-strong duals of \texorpdfstring{$L^1(T)$}{L1(T)} and \texorpdfstring{$L^\infty(T)$}{Linfty(T)}} \label{sec:duals}

\begin{definition}
    Let $(E,e,T)$ be a $T$-universally complete conditional Riesz triple and $p \in [1,\infty]$. 
    \begin{enumList}
        \item An $R(T)$-linear map $\varphi \colon L^p(T) \to R(T)$ is called \emph{$\norm{\cdot}_{T,p}$-bounded} if there exists $g \in R(T)_+$ such that, for all $f \in L^p(T)$, one has $\abs{\varphi(f)} \leq g \norm{f}_{T,p}$.
        
        \item The set 
        \[
            \hat{L^p(T)} \coloneqq \Dset{\varphi \colon L^p(T) \to R(T)}{\varphi \text{ is $R(T)$-linear, regular, $\norm{\cdot}_{T,p}$-bounded}}
        \]
        is called the \emph{$T$-strong dual} of $L^p(T)$.

        \item We define the map $\norm{\cdot}_{\hat{L^p(T)}} \colon \hat{L^p(T)} \to R(T)$ by
        \[
            \norm{\varphi}_{\hat{L^p(T)}} \coloneqq \inf\Dset{g \in R(T)_+}{\forall f \in L^p(T) : \abs{\varphi(f)} \leq g \norm{f}_{T,p}}.
        \]
    \end{enumList}
\end{definition}

The following lemma is known for the case $p = 2$, cf. \cite[Lemma 3.3(iii)]{Kalauch-Kuo-Watson-2024b}.

\begin{lemma} \label{thm:strong-dual-order-continuity}
    Let $(E,e,T)$ be a $T$-universally complete conditional Riesz triple, $p \in \nat$, and $\varphi \in \hat{L^p(T)}$.
    Then $\varphi$ is order continuous.
\end{lemma}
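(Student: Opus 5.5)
Since $\varphi$ is regular, we have $\varphi = \varphi^+ - \varphi^-$ with both parts positive, and a difference of order continuous operators is order continuous. So it suffices to show that $\abs{\varphi}$ is order continuous, and I will argue directly with the bound coming from $\norm{\cdot}_{T,p}$-boundedness. Fix $k \in R(T)_+$ such that $\abs{\varphi(f)} \leq k \norm{f}_{T,p}$ for all $f \in L^p(T)$.

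The first step is to transfer this bound to $\abs{\varphi}$. Let $f \in L^p(T)_+$. By \cref{thm:riesz-kantorovich},
\[
    \abs{\varphi}(f) = \sup\Dset{\abs{\varphi(g)}}{g \in L^p(T), -f \leq g \leq f}.
\]
For every such $g$ we have $\abs{g} \leq f$. Monotonicity of $\norm{\cdot}_{T,p}$ then gives
\[
    \abs{\varphi(g)} \leq k \norm{g}_{T,p} \leq k \norm{f}_{T,p},
\]
and hence $\abs{\varphi}(f) \leq k \norm{f}_{T,p}$.

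The second step establishes order continuity. Let $(x_\alpha)$ be a net in $L^p(T)_+$ with $x_\alpha \downarrow 0$. Since $p \in \nat$, \cref{thm:p-norm-o-continuous} yields $\norm{x_\alpha}_{T,p} \downarrow 0$ in $R(T)$. Multiplication by $k$ in the universally complete $f$-algebra $R(T)$ is order continuous (multiplication by a fixed positive element of an Archimedean $f$-algebra is an order continuous positive operator). Therefore $k\norm{x_\alpha}_{T,p} \downarrow 0$. Now $\abs{\varphi}(x_\alpha)$ is decreasing and satisfies
\[
    0 \leq \abs{\varphi}(x_\alpha) \leq k \norm{x_\alpha}_{T,p},
\]
so its infimum is $0$. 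Hence $\abs{\varphi}$ is order continuous, and so are $\varphi^\pm \leq \abs{\varphi}$, which gives the order continuity of $\varphi$.

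The main point to double-check is the order continuity of multiplication by $k$ in $R(T)$, which is a standard $f$-algebra fact. Everything else follows from \cref{thm:riesz-kantorovich} and \cref{thm:p-norm-o-continuous}. Note that the argument never uses the $R(T)$-homogeneity of $\varphi$.
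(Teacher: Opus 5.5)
Your argument is correct and is essentially the paper's proof, which simply states that the lemma is an immediate consequence of the order continuity of $\norm{\cdot}_{T,p}$ from \cref{thm:p-norm-o-continuous}. You supply the details the paper leaves implicit: the domination $\abs{\varphi}(f) \leq k\norm{f}_{T,p}$, obtained via \cref{thm:riesz-kantorovich} and monotonicity of the norm exactly as in the proof of \cref{thm:T-strong-dual-ideal}, and the order continuity of multiplication by $k$ in $R(T)$.
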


\begin{proof}
    This is an immediate consequence of the fact that $\norm{\cdot}_{T,p}$ is order continuous, see \cref{thm:p-norm-o-continuous}. 
\end{proof}

Note that $\rmL^\rmr(L^p(T),R(T))$ is an $R(T)$-module with the addition and multiplication given pointwise and a Dedekind complete Riesz space by \cref{thm:riesz-kantorovich}.

\begin{proposition} \label{thm:T-strong-dual-ideal}
    Let $(E,e,T)$ be a $T$-universally complete conditional Riesz triple and let $p \in [1,\infty]$.
    Then $\hat{L^p(T)}$ is an $R(T)$-submodule and an order ideal of $\rmL^\rmr(L^p(T),R(T))$.
\end{proposition}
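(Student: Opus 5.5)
We verify three things: $\hat{L^p(T)}$ is an $R(T)$-submodule, it is a Riesz subspace (closed under $\varphi\mapsto\abs{\varphi}$), and it is solid. Throughout, $\abs{\varphi}$ denotes the modulus in $\rmL^\rmr(L^p(T),R(T))$, given by \cref{thm:riesz-kantorovich}.

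\emph{Submodule.} Let $\varphi,\psi\in\hat{L^p(T)}$ with bounds $g_\varphi,g_\psi$, and let $\gamma\in R(T)$. Then $\varphi+\psi$ and $\gamma\varphi$ are $R(T)$-linear. Multiplication by $\gamma=\gamma^+-\gamma^-$ is a difference of positive maps on $R(T)$, so $\gamma\varphi$ is regular. For the norm bounds we use $\abs{\varphi(f)+\psi(f)}\le (g_\varphi+g_\psi)\norm{f}_{T,p}$ and $\abs{\gamma\varphi(f)}=\abs{\gamma}\abs{\varphi(f)}\le \abs{\gamma}g_\varphi\norm{f}_{T,p}$, which holds because $R(T)$ is an $f$-algebra.

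\emph{Solidity.} This is the main step. Let $\psi\in\rmL^\rmr(L^p(T),R(T))$ be $R(T)$-linear and $\varphi\in\hat{L^p(T)}$ with $\abs{\psi}\le\abs{\varphi}$. Then for $f\in L^p(T)$ we have $\abs{\psi(f)}\le\abs{\psi}(\abs f)\le\abs{\varphi}(\abs f)$. It therefore suffices to show $\abs{\varphi}(\abs f)\le g\norm{f}_{T,p}$ for the bound $g$ of $\varphi$. By \cref{thm:riesz-kantorovich}, $\abs{\varphi}(\abs f)=\sup\{\abs{\varphi(h)}: -\abs f\le h\le\abs f\}$. Each such $h$ satisfies $\abs{\varphi(h)}\le g\norm{h}_{T,p}\le g\norm{f}_{T,p}$ by monotonicity of the norm, which gives the bound.

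There is a subtlety: an order ideal must contain every regular $\psi$ dominated by $\abs\varphi$, but such $\psi$ need not be $R(T)$-linear a priori. The remedy is to show that every regular $\psi$ with $\abs\psi\le c\abs\varphi$ is automatically $R(T)$-linear. The plan for this is as follows.
\begin{enumerate}
\item Show $\abs{\varphi}$ is $R(T)$-linear. For $\alpha\in R(T)_+$ and $f\ge0$, substitute $h\mapsto \alpha h$ in the Riesz--Kantorovich formula, using partial inverses $\alpha^{-1}$ and \cref{thm:prod-w/-band-proj-invariant} exactly as in the proof of \cref{thm:integral}\ref{it:thm:integral:linear}. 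This yields $\abs\varphi(\alpha f)=\alpha\abs\varphi(f)$, and the same argument handles $\varphi^\pm$.
\item Show $\psi$ commutes with band projections $P_\pi$ for components $\pi$ of $e$ in $R(T)$. Here $\abs{\psi}(P_\pi^{d} f)\le c\abs{\varphi}((e-\pi)\abs f)=c(e-\pi)\abs\varphi(\abs f)\in B_\pi^d$, so $P_\pi\psi((e-\pi)f)=0$ and hence $\psi(\pi f)=\pi\psi(f)$.
\item Extend to $R(T)$-step functions by linearity, and then to arbitrary $\alpha\in R(T)$ by Freudenthal approximation, \cref{thm:freudenthal}. Uniform convergence $\alpha_n\to\alpha$ relative to a unit controls $\abs{\psi((\alpha-\alpha_n)f)}\le c\abs\varphi(\abs{\alpha-\alpha_n}\abs f)$. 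For unbounded $\alpha$, first localize with the band projections $P_{\{|\alpha|\le n\}}$.
\end{enumerate}
With this in hand, $\psi\in\hat{L^p(T)}$ by the estimate above. Closure under $\abs\cdot$ follows from the same estimate together with step 1. Hence $\hat{L^p(T)}$ is a solid submodule, and therefore an order ideal.

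I expect the automatic $R(T)$-linearity (steps 2 and 3) to be the main obstacle. If the paper intends the ideal property only within the $R(T)$-linear operators, steps 2 and 3 can be dropped.
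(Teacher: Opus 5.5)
Your solidity estimate, $\abs{\psi(f)}\le\abs{\psi}(\abs f)\le\abs{\varphi}(\abs f)=\sup\{\abs{\varphi(h)}:\abs h\le\abs f\}\le g\norm{f}_{T,p}$ via \cref{thm:riesz-kantorovich} and monotonicity of $\norm{\cdot}_{T,p}$, is exactly the paper's argument. The paper also treats the submodule part as routine. The difference is that the paper stops at this estimate. It concludes that a regular operator dominated by $\abs{\psi}$ lies in $\hat{L^p(T)}$ without checking that it is $R(T)$-linear. It also never verifies that $\abs{\psi}$ itself is $R(T)$-linear, which is needed even for $\hat{L^p(T)}$ to be a Riesz subspace.

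Your steps 1--3 supply exactly this missing automatic $R(T)$-linearity, and they are sound:
\begin{enumerate}
\item[(1)] Step 1 is the substitution $h=\alpha(\alpha^{-1}h)$ in the Riesz--Kantorovich formula, together with the fact that multiplication by $\alpha\in R(T)_+$ preserves suprema in $R(T)$.
\item[(2)] Step 2 yields $P_\pi\psi((e-\pi)f)=0$ for every component $\pi\in R(T)$ of $e$. Applying it also to $e-\pi$ gives $\psi(\pi f)=\pi\psi(\pi f)=\pi\psi(f)$.
\item[(3)] Step 3 follows from \cref{thm:freudenthal} applied inside $R(T)$, plus exhaustion by $\pi_n\coloneqq P_{(ne-\abs{\alpha})^+}(e)\uparrow e$, since $\pi_n x=0$ for all $n$ forces $x=0$.
\end{enumerate}
So your route proves the statement as written, whereas the paper's proof only establishes the norm bound for dominated operators. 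Your remark that step 1 is still needed even if one works only within $R(T)$-linear operators is also correct.

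Two details need tidying.
\begin{enumerate}
\item[(i)] \cref{thm:prod-w/-band-proj-invariant} is stated only for elements of $L^\infty(T)$, so in step 1 argue directly in $E^u$. From $\abs h\le\alpha\abs f$ you get $h\in B_\alpha$, hence $h=P_\alpha(e)h=\alpha\,\alpha^{-1}h$. Moreover $\alpha^{-1}h\in L^p(T)$, because $\abs{\alpha^{-1}h}\le P_\alpha(e)\abs f\le\abs f$ and $L^p(T)$ is an ideal in the $R(T)$-module $L^1(T)$.
\item[(ii)] In step 2 the estimate should be written for $\abs f$, namely $\abs{\psi}((e-\pi)\abs f)\le c\,(e-\pi)\abs{\varphi}(\abs f)\in B_{e-\pi}=B_\pi^d$.
\end{enumerate}
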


\begin{proof}
    It can be seen that $\hat{L^p(T)}$ is an $R(T)$-submodule of $\rmL^\rmr(L^p(T),R(T))$.
    %
    We show that $\hat{L^p(T)}$ is solid in $\rmL^\rmr(L^p(T),R(T))$.
    Indeed, let $\varphi \in \rmL^\rmr(L^p(T),R(T))$ and $\psi \in \hat{L^p(T)}$ be such that $\abs{\varphi} \leq \abs{\psi}$.
    Choose $h \in R(T)_+$ such that $\abs{\psi(f)} \leq h \norm{f}_{T,p}$ for all $f \in L^p(T)$.
    Let $f \in L^p(T)$.
    By \cref{thm:riesz-kantorovich}, one has
    \begin{align*}
        \abs{\abs{\psi}(f)}
          \leq \abs{\psi}(\abs{f})
        & = \sup\Dset{\abs{\psi(g)}}{g \in L^p(T), \abs{g} \leq \abs{f}} \\
        & \leq \sup\Dset{h \norm{g}_{T,p}}{g \in L^p(T), \abs{g} \leq \abs{f}}
          \leq h \norm{f}_{T,p}.
    \end{align*}
    Hence,
    \[
        \abs{\varphi(f)}
        \leq \abs{\varphi}(\abs{f})
        \leq \abs{\psi}(\abs{f})
        \leq h \norm{f}_{T,p}.
    \]
    We conclude that $\varphi \in \hat{L^p(T)}$.
    Consequently, $\hat{L^p(T)}$ is an order ideal in $\rmL^\rmr(L^p(T),R(T))$.
\end{proof}

Recall that, for $f \in L^p(T)$, we denote by $\norm{f}_{T,p}^{-1}$ the canonical partial inverse of $\norm{f}_{T,p}$ in $R(T)$.

\begin{lemma} \label{thm:T-strong-dual-norm-representation}
    Let $(E,e,T)$ be a $T$-universally complete conditional Riesz triple, $p \in [1,\infty]$, and $\varphi \in \hat{L^p(T)}$.
    \begin{enumList}
        \item \label{it:thm:T-strong-dual-norm-representation:inf-attained}
        For every $f \in L^p(T)$, we have $\abs{\varphi(f)} \leq \norm{\varphi}_{\hat{L^p(T)}} \norm{f}_{T,p}$.

        \item \label{it:thm:T-strong-dual-norm-representation:representations}
        We have
        \begin{align*}
            \norm{\varphi}_{\hat{L^p(T)}}
            & = \sup\Dset{\abs{\varphi(f)}}{f \in L^p(T), \norm{f}_{T,p} \leq e} \\
            & = \sup\Dset{\abs{\varphi(f)} \cdot \norm{f}_{T,p}^{-1}}{f \in L^p(T)}.
        \end{align*}
    \end{enumList}
\end{lemma}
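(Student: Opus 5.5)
Write $N \coloneqq \norm{\varphi}_{\hat{L^p(T)}}$ and let $G \coloneqq \dset{g \in R(T)_+}{\forall f \in L^p(T) : \abs{\varphi(f)} \leq g \norm{f}_{T,p}}$. Then $N = \inf G$, which exists in $R(T)$ because $R(T)$ is Dedekind complete and $G \neq \emptyset$.

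For \ref{it:thm:T-strong-dual-norm-representation:inf-attained}, I will show that $G$ is downward directed. Let $g_1, g_2 \in G$. Put $q \coloneqq P_{(g_1 - g_2)^+}(e)$. This is a component of $e$, and it lies in $R(T)$ by \cref{thm:projections-E-vs-R(T)}. We have $g_1 \wedge g_2 = (e-q) g_1 + q g_2$. Since $\varphi$ is $R(T)$-linear, we get $\abs{\varphi(f)} = (e-q)\abs{\varphi(f)} + q\abs{\varphi(f)}$. Each summand can be estimated by applying the respective bound: $(e-q)\abs{\varphi(f)} \leq (e-q) g_1 \norm{f}_{T,p}$ and $q \abs{\varphi(f)} \leq q g_2 \norm{f}_{T,p}$. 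Here I use that multiplication by $q$ is a band projection, hence positive. Adding gives $g_1 \wedge g_2 \in G$.

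Thus $G \downarrow N$. Multiplication by the fixed positive element $\norm{f}_{T,p}$ is an order continuous operation in the $f$-algebra $R(T)$, since it is a positive orthomorphism. Hence $g\norm{f}_{T,p} \downarrow N \norm{f}_{T,p}$ over $g \in G$, and so $\abs{\varphi(f)} \leq N \norm{f}_{T,p}$.

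For \ref{it:thm:T-strong-dual-norm-representation:representations}, let $S_1$ and $S_2$ be the two suprema.

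First, $S_1 \leq N$ and $S_2 \leq N$. The bound $S_1 \leq N$ is immediate from \ref{it:thm:T-strong-dual-norm-representation:inf-attained}. For $S_2$, we get $\abs{\varphi(f)}\norm{f}_{T,p}^{-1} \leq N \norm{f}_{T,p}\norm{f}_{T,p}^{-1} = N P_{\norm{f}_{T,p}}(e) \leq N$. Both suprema exist because the sets are bounded by $N$.

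Second, $S_2 \leq S_1$. Fix $f$ and set $h \coloneqq \norm{f}_{T,p}^{-1} f \in L^p(T)$, which uses the module structure. By homogeneity of the norm, $\norm{h}_{T,p} = P_{\norm{f}_{T,p}}(e) \leq e$. By $R(T)$-linearity, $\abs{\varphi(h)} = \norm{f}_{T,p}^{-1}\abs{\varphi(f)}$.

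Third, $N \leq S_2$. I will show $S_2 \in G$. It suffices to prove $\abs{\varphi(f)} \leq S_2\norm{f}_{T,p}$ for every $f$. Write $r \coloneqq \norm{f}_{T,p}$. Then $\abs{\varphi(f)} \norm{f}_{T,p}^{-1} \leq S_2$, and multiplying by $r$ gives $P_r(e)\abs{\varphi(f)} \leq S_2 r$. It remains to handle $(e - P_r(e))\abs{\varphi(f)}$, and I claim it vanishes. Indeed, $\abs{(e-P_r(e))\varphi(f)} = \abs{\varphi((e-P_r(e))f)} \leq N \norm{(e-P_r(e))f}_{T,p} = N(e-P_r(e))r = 0$.

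Chaining the three steps gives $N \leq S_2 \leq S_1 \leq N$, so all three quantities coincide.

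The main obstacle is the directedness argument in \ref{it:thm:T-strong-dual-norm-representation:inf-attained}. Passing from each individual bound to the infimum needs both the band-projection splitting in $R(T)$ and the order continuity of multiplication by $\norm{f}_{T,p}$. For $p = \infty$ the norm is not order continuous, but no continuity of the norm is needed here, so the argument is uniform in $p$.
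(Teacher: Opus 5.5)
Your proof is correct and follows the paper's argument: in (b) you use the same chain $S_2 \leq S_1 \leq N \leq S_2$ built on the normalization $\norm{f}_{T,p}^{-1} f$, and in (a) you pass the infimum of $G$ through multiplication by $\norm{f}_{T,p}$. The only differences are minor. In (a) you justify the interchange $\inf_{g \in G} g\norm{f}_{T,p} = (\inf G)\norm{f}_{T,p}$, which the paper asserts in one line, by showing that $G$ is closed under $\wedge$ via the band-projection splitting and then using order continuity of multiplication. In the last step of (b) you eliminate $(e-P_r(e))\abs{\varphi(f)}$ via $R(T)$-linearity instead of citing \cref{thm:prod-w/-band-proj-invariant}.
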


\begin{proof}
    \ref{it:thm:T-strong-dual-norm-representation:inf-attained}
    For every $f \in L^p(T)$, we have
    \begin{align*}
        \abs{\varphi(f)}
        & \leq \inf\Dset{g \norm{f}_{T,p}}{g \in R(T)_+, \forall h \in L^p(T) : \abs{\varphi(h)} \leq g \norm{h}_{T,p}} \\
        & \leq \left( \inf\Dset{g \in R(T)_+}{\forall h \in L^p(T) : \abs{\varphi(h)} \leq g \Norm{h}_{T,p}} \right) \norm{f}_{T,p} \\
        & = \norm{\varphi}_{\hat{L^p(T)}} \norm{f}_{T,p}.
    \end{align*}

    \ref{it:thm:T-strong-dual-norm-representation:representations}
    We denote
    \begin{align*}
        S_1 & \coloneqq \sup\Dset{\abs{\varphi(f)}}{f \in L^p(T), \norm{f}_{T,p} \leq e}, \\
        S_2 & \coloneqq \sup\Dset{\abs{\varphi(f)} \cdot \norm{f}_{T,p}^{-1}}{f \in L^p(T)}.
    \end{align*}

    For every $f \in L^p(T)$, we have that $\norm{f \norm{f}_{T,p}^{-1}}_{T,p} = \norm{f}_{T,p} \norm{f}_{T,p}^{-1} = P_{\norm{f}_{T,p}}(e) \leq e$, hence $\abs{\varphi(f)} \norm{f}_{T,p}^{-1} = \abs{\varphi(f \norm{f}_{T,p}^{-1})} \leq S_1$, thus $S_2 \leq S_1$.

    For all $f \in L^p(T)$ with $\norm{f}_{T,p} \leq e$, we have by \ref{it:thm:T-strong-dual-norm-representation:inf-attained} that $\abs{\varphi(f)} \leq \norm{\varphi}_{\hat{L^p(T)}} \norm{f}_{T,p} \leq \norm{\varphi}_{\hat{L^p}(T)}$, which yields $S_1 \leq \norm{\varphi}_{\hat{L^p(T)}}$.

    Next, we show that $\norm{\varphi}_{\hat{L^p(T)}} \leq S_2$.
    Let $f \in L^p(T)$.
    By \ref{it:thm:T-strong-dual-norm-representation:inf-attained}, we have $\abs{\varphi(f)} \leq \norm{\varphi}_{\hat{L^p(T)}} \norm{f}_{T,p}$.
    From \cref{thm:prod-w/-band-proj-invariant}, we obtain $P_{\norm{f}_{T,p}}(e) \abs{\varphi(f)} = \abs{\varphi(f)}$.
    Hence,
    \[
        \abs{\varphi(f)}
        = P_{\norm{f}_{T,p}}(e) \abs{\varphi(f)}
        = \norm{f}_{T,p} \norm{f}_{T,p}^{-1} \abs{\varphi(f)}
        \leq \norm{f}_{T,p} S_2.
    \]
    We conclude that $\norm{\varphi}_{\hat{L^p(T)}} \leq S_2$.

    Consequently, $S_2 \leq S_1 \leq \norm{\varphi}_{\hat{L^p(T)}} \leq S_2$, which gives 
    \[
        \norm{\varphi}_{\hat{L^p(T)}} = S_1 = S_2.
        \qedhere
    \]
\end{proof}

An alternative proof of the first equality in \cref{thm:T-strong-dual-norm-representation}\ref{it:thm:T-strong-dual-norm-representation:representations} can be found in \cite[Theorem 3.6.4]{L-func-ana-2025}.

\begin{theorem}
    Let $(E,e,T)$ be a $T$-universally complete conditional Riesz triple and let $p \in [1,\infty]$.
    Then $\Norm{\cdot}_{\hat{L^p(T)}}$ is an $R(T)$-valued norm on $\hat{L^p(T)}$.
\end{theorem}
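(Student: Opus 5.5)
We check the three norm axioms for $\norm{\cdot}_{\hat{L^p(T)}}$ on the $R(T)$-module $\hat{L^p(T)}$ (\cref{thm:T-strong-dual-ideal}). The main tools are \cref{thm:T-strong-dual-norm-representation}\ref{it:thm:T-strong-dual-norm-representation:inf-attained}, which says the infimum defining the norm is itself an admissible bound, and the representation
\[
    \norm{\varphi}_{\hat{L^p(T)}} = \sup\Dset{\abs{\varphi(f)}}{f \in L^p(T), \norm{f}_{T,p} \leq e}
\]
from \cref{thm:T-strong-dual-norm-representation}\ref{it:thm:T-strong-dual-norm-representation:representations}. The infimum exists because $R(T)$ is Dedekind complete and the set of admissible $g$ is non-empty and bounded below by $0$.

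\emph{Triangle inequality.} Let $\varphi,\psi \in \hat{L^p(T)}$ and $f \in L^p(T)$. By \cref{thm:T-strong-dual-norm-representation}\ref{it:thm:T-strong-dual-norm-representation:inf-attained},
\[
    \abs{(\varphi+\psi)(f)} \leq \abs{\varphi(f)} + \abs{\psi(f)} \leq \left(\norm{\varphi}_{\hat{L^p(T)}} + \norm{\psi}_{\hat{L^p(T)}}\right) \norm{f}_{T,p}.
\]
So $\norm{\varphi}_{\hat{L^p(T)}} + \norm{\psi}_{\hat{L^p(T)}}$ is an admissible bound for $\varphi+\psi$, and taking the infimum gives the triangle inequality.

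\emph{Definiteness.} If $\norm{\varphi}_{\hat{L^p(T)}} = 0$, then \cref{thm:T-strong-dual-norm-representation}\ref{it:thm:T-strong-dual-norm-representation:inf-attained} gives $\abs{\varphi(f)} \leq 0$ for every $f$, so $\varphi = 0$. Conversely, $g = 0$ is admissible for $\varphi = 0$, so its norm is $0$.

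\emph{Absolute homogeneity.} Let $\alpha \in R(T)$. This is the step needing the most care. I would use the sup-representation together with the fact that multiplication by $\abs{\alpha}$ preserves suprema in $R(T)$. This holds because $R(T)$ is a universally complete $f$-algebra, where multiplication by a positive element is order continuous; for instance, it follows from the multiplication operator being an orthomorphism.

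Using $R(T)$-linearity of $\varphi$ and $\abs{\alpha\varphi(f)} = \abs{\alpha}\abs{\varphi(f)}$, we get
\[
    \norm{\alpha\varphi}_{\hat{L^p(T)}} = \sup_{\norm{f}_{T,p} \leq e} \abs{\alpha}\,\abs{\varphi(f)} = \abs{\alpha} \sup_{\norm{f}_{T,p} \leq e} \abs{\varphi(f)} = \abs{\alpha}\,\norm{\varphi}_{\hat{L^p(T)}}.
\]

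If one prefers to avoid order continuity of multiplication, the two inequalities can be shown separately.

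\emph{The bound $\leq$.} Since $\abs{\alpha\varphi(f)} \leq \abs{\alpha}\,\norm{\varphi}_{\hat{L^p(T)}}\,\norm{f}_{T,p}$, the element $\abs{\alpha}\,\norm{\varphi}_{\hat{L^p(T)}}$ is admissible for $\alpha\varphi$.

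\emph{The bound $\geq$.} Write $\abs{\alpha}^{-1}$ for the canonical partial inverse of $\abs{\alpha}$. Split $\varphi = P_\alpha\varphi + (\id - P_\alpha)\varphi$ using the component $P_\alpha(e)$. From $\abs{\alpha}^{-1}\abs{\alpha} = P_\alpha(e)$, the element $\abs{\alpha}^{-1}\norm{\alpha\varphi}_{\hat{L^p(T)}}$ bounds $P_\alpha(e)\varphi$. Together with the bound $(e - P_\alpha(e))\norm{\varphi}_{\hat{L^p(T)}}$ for the other part, this gives
\[
    \norm{\varphi}_{\hat{L^p(T)}} \leq \abs{\alpha}^{-1}\norm{\alpha\varphi}_{\hat{L^p(T)}} + (e - P_\alpha(e))\norm{\varphi}_{\hat{L^p(T)}}.
\]
Multiplying by $\abs{\alpha}$, and using $\abs{\alpha}(e - P_\alpha(e)) = 0$ together with \cref{thm:prod-w/-band-proj-invariant} to remove the factor $P_\alpha(e)$ in front of $\norm{\alpha\varphi}_{\hat{L^p(T)}}$, yields $\abs{\alpha}\,\norm{\varphi}_{\hat{L^p(T)}} \leq \norm{\alpha\varphi}_{\hat{L^p(T)}}$.
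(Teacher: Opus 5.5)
Your proposal is correct and follows the paper's route. The paper only sketches it: the triangle inequality follows from the definition, definiteness from \cref{thm:T-strong-dual-norm-representation}\ref{it:thm:T-strong-dual-norm-representation:inf-attained}, and absolute homogeneity from the supremum representation in \ref{it:thm:T-strong-dual-norm-representation:representations}, where multiplication by $\abs{\alpha}$ preserves suprema in $R(T)$ because it is a positive orthomorphism, hence an order continuous lattice homomorphism. Your alternative partial-inverse argument for $\abs{\alpha}\norm{\varphi}_{\hat{L^p(T)}} \leq \norm{\alpha\varphi}_{\hat{L^p(T)}}$ is also valid, and its last step needs only $0 \leq P_\alpha(e) \leq e$ rather than \cref{thm:prod-w/-band-proj-invariant}.
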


\begin{proof}
    The triangle inequality of $\Norm{\cdot}_{\hat{L^p(T)}}$ follows from the definition.
    The definiteness of $\Norm{\cdot}_{\hat{L^p(T)}}$ is a consequence of \cref{thm:T-strong-dual-norm-representation}\ref{it:thm:T-strong-dual-norm-representation:inf-attained}.
    The absolute homogeneity can be verified with the representations of $\Norm{\cdot}_{\hat{L^p(T)}}$ given in \cref{thm:T-strong-dual-norm-representation}\ref{it:thm:T-strong-dual-norm-representation:representations}.
\end{proof}

Next, we give our main results.
First, we give a representation for the $T$-strong dual of $L^1(T)$.
To this end, we need the following representation of $\hat{L^2(T)}$.

\begin{theorem}[{\cite[Theorems 3.5 and 3.7]{Kalauch-Kuo-Watson-2024}}] \label{thm:dual-of-l2}
    Let $(E,e,T)$ be a $T$-universally complete conditional Riesz triple.
    \begin{enumList}
        \item For every $f \in L^2(T)$, the map
        \[
            L^2(T) \to R(T), \quad g \mapsto T(fg),
        \]
        is $R(T)$-linear and $\norm{\cdot}_{T,2}$-bounded.

        \item The map
        \[
            L^2(T) \to \hat{L^2(T)}, \quad f \mapsto \left( g \mapsto T(fg) \right),
        \]
        is an $R(T)$-linear isometric Riesz isomorphism.
    \end{enumList}
\end{theorem}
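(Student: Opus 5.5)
The plan is to treat the two parts separately. Part (a) is a direct consequence of the averaging property together with \cref{thm:hoelder}. The substantial work in part (b) is surjectivity, which I would reduce to a Radon--Nikodym type representation via the components of $e$.

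For (a), fix $f \in L^2(T)$ and set $\varphi_f(g) \coloneqq T(fg)$. By \cref{thm:hoelder} with $p = q = 2$, $fg \in L^1(T)$, so $\varphi_f$ is defined, and
\[
    \abs{\varphi_f(g)} \leq T\abs{fg} = \norm{fg}_{T,1} \leq \norm{f}_{T,2} \norm{g}_{T,2}.
\]
This gives $\norm{\cdot}_{T,2}$-boundedness with $k = \norm{f}_{T,2}$. Since $T$ is an averaging operator, $T(f \alpha g) = \alpha T(fg)$ for $\alpha \in R(T)$, so $\varphi_f$ is $R(T)$-linear. Regularity follows from $\varphi_f = \varphi_{f^+} - \varphi_{f^-}$, where both operators are positive.

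For (b), the map $\Phi \colon f \mapsto \varphi_f$ is $R(T)$-linear by the same averaging argument.

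\emph{Isometry.} Part (a) gives $\norm{\varphi_f}_{\hat{L^2(T)}} \leq \norm{f}_{T,2}$. For the reverse inequality I test on $g \coloneqq f \norm{f}_{T,2}^{-1}$. Using the second representation in \cref{thm:T-strong-dual-norm-representation}\ref{it:thm:T-strong-dual-norm-representation:representations}, we get
\[
    \varphi_f(f)\norm{f}_{T,2}^{-1} = T(f^2)\norm{f}_{T,2}^{-1} = \norm{f}_{T,2}^2 \norm{f}_{T,2}^{-1} = \norm{f}_{T,2}.
\]
Injectivity then follows from definiteness of the norm.

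\emph{Positivity in both directions.} Clearly $f \geq 0$ implies $\varphi_f \geq 0$. Conversely, suppose $\varphi_f \geq 0$ and test on $g = P_{f^-}(e) \in L^2(T)_+$. This gives $0 \leq T(f P_{f^-}(e)) = -T(f^-)$, so $T(f^-) = 0$, and strict positivity yields $f^- = 0$. Hence $\Phi$ is bipositive. Once surjectivity is established, $\Phi$ is a bijective bipositive linear map and therefore a Riesz isomorphism.

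\emph{Surjectivity.} By \cref{thm:T-strong-dual-ideal} and $\varphi = \varphi^+ - \varphi^-$, it suffices to treat $\varphi \in \hat{L^2(T)}_+$ with bound $k$. I define the charge $\mu(p) \coloneqq \varphi(p)$ for $p \in C_e$; this is a positive $R(T)$-valued charge. It is order continuous by \cref{thm:strong-dual-order-continuity}. It is also $T$-absolutely continuous, because $\abs{\varphi(p)} \leq k\norm{p}_{T,2} = k (Tp)^{1/2} \in B_{Tp}$. I would then invoke (or reprove) a Radon--Nikodym theorem for conditional expectations on Riesz spaces. Its content is to produce $f \in L^1(T)_+$ with $\varphi(p) = T(fp)$ for all $p \in C_e$, by taking $f$ as the supremum of the $e$-step functions $s$ with $T(sq) \leq \varphi(q)$ for all components $q$; that this supremum lies in $L^1(T)$ uses $T$-universal completeness.

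$R(T)$-linearity then extends the identity $\varphi(g) = T(fg)$ to $g \in \calS_e(T)$. \cref{thm:freudenthal} together with order continuity of both sides (\cref{thm:strong-dual-order-continuity} and order continuity of $T$) extends it to all $g \in L^2(T)_+$, for instance to $g = f_n \coloneqq f \wedge n e$. It remains to show $f \in L^2(T)$. From
\[
    T(f_n^2) \leq T(f f_n) = \varphi(f_n) \leq k\, T(f_n^2)^{1/2},
\]
we get $T(f_n^2) \leq k^2$ (working on the band where $T(f_n^2) > 0$). Since $f_n^2 \uparrow f^2$, $T$-universal completeness gives $f^2 \in L^1(T)$, that is, $f \in L^2(T)$, and then $\varphi = \varphi_f$.

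The main obstacle I expect is the Radon--Nikodym step, specifically showing that the extremal $f$ actually reproduces $\varphi$ on every component rather than merely satisfying $T(fp) \leq \varphi(p)$. To close the gap I would use an exhaustion argument: assume the gap $\varphi - \varphi_f$ is nonzero on some component, then produce a nonzero step function that can still be added to $f$, contradicting maximality.
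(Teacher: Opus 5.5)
Everything outside the Radon--Nikod\'ym step holds up. This covers part (a), the $R(T)$-linearity of $\Phi$, the isometry via the second formula in \cref{thm:T-strong-dual-norm-representation}, injectivity, bipositivity via $g = P_{f^-}(e)$, and the reduction to $\varphi \geq 0$. It also covers the construction of $f$: admissible step functions are closed under $\vee$ and satisfy $T(s) \leq \varphi(e)$, so $T$-universal completeness gives the supremum. The upgrade $f \in L^2(T)$ from $T(f_n^2) \leq k^2$ is also fine. One small point: before you know $f \in L^2(T)$, use $\varphi(g) = T(fg)$ only for $g \in L^\infty(T)$, where $fg \in L^1(T)$ is clear; that is all you need. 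The paper itself gives no proof of this theorem. It imports it from Kalauch--Kuo--Watson, whose title already indicates that the representation rests on a Hahn--Jordan decomposition.

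The genuine gap is the step you flag yourself, and it carries the entire weight of surjectivity. Your maximality argument needs the following: if $\nu(q) \coloneqq \varphi(q) - T(fq)$ is not identically zero on $C_e$, then there is a nonzero step function $s \geq 0$ with $T(sq) \leq \nu(q)$ for all $q \in C_e$. Classically one takes $s = \varepsilon r$, where $r$ is the positive part of a Hahn decomposition of $\nu - \varepsilon T$; if $r = 0$ for every $\varepsilon > 0$, then $\nu(e) \leq \varepsilon e$ for all $\varepsilon$, so $\nu = 0$. The proofs of such a decomposition, and any naive exhaustion, pick sets on which the signed measure is positive, exceeds a threshold, or nearly attains its supremum. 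That uses the fact that $\real$ is totally ordered. Here $(\nu - \varepsilon T)(q) \in R(T)$ is typically neither $\geq 0$ nor $\leq 0$. So knowing that $\nu$ is nonzero somewhere does not give you a nonzero component on all of whose subcomponents $\nu - \varepsilon T$ is positive. Likewise, a family of components along which $(\nu-\varepsilon T)(q)$ increases to its supremum need not converge to a component. To close this, you would have to localize along components $\pi \in C_e \cap R(T)$, using that $T$-absolute continuity gives $\nu(\pi q) = \pi\,\nu(q)$. You would then glue the local choices using order continuity of $\nu$ and $T$ and Dedekind completeness of $R(T)$. In effect, you must prove a Hahn--Jordan decomposition for order continuous $T$-absolutely continuous $R(T)$-valued charges. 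Nothing in this paper supplies that, and your proposal leaves it as an announced but unproved lemma. Citing an existing Radon--Nikod\'ym theorem for conditional expectations would be legitimate, but you would have to name it and match its hypotheses to your charge, and the proposal does neither. So surjectivity, and with it part (b), is not yet established.
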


\begin{theorem} \label{thm:dual-of-l1}
    Let $(E,e,T)$ be a $T$-universally complete conditional Riesz triple.
    \begin{enumList}
        \item \label{it:thm:dual-of-l1:well-defined}
        For each $f \in L^\infty(T)$, the map
        \[
            L^1(T) \to R(T), \quad g \mapsto T(f g),
        \]
        is $R(T)$-linear and $\norm{\cdot}_{T,1}$-bounded.

        \item \label{it:thm:dual-of-l1:isomorphism}
        The map
        \[
            \Phi \colon L^\infty(T) \to \hat{L^1(T)}, \quad f \mapsto \left( g \mapsto T(f g) \right),
        \]
        is an $R(T)$-linear isometric Riesz isomorphism.
    \end{enumList}
\end{theorem}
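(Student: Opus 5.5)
Part \ref{it:thm:dual-of-l1:well-defined} follows from the Hölder inequality, \cref{thm:hoelder}, with $p=\infty$, $q=1$. For $f \in L^\infty(T)$ and $g \in L^1(T)$ we have $fg \in L^1(T)$ and
\[
    \abs{T(fg)} \leq T\abs{fg} = \norm{fg}_{T,1} \leq \norm{f}_{T,\infty}\norm{g}_{T,1}.
\]
$R(T)$-linearity follows from the averaging property of $T$. Regularity holds because $g \mapsto T(f^+g) - T(f^-g)$ is a difference of positive operators. Hence $\Phi(f) \in \hat{L^1(T)}$ and $\norm{\Phi(f)}_{\hat{L^1(T)}} \leq \norm{f}_{T,\infty}$.

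For part \ref{it:thm:dual-of-l1:isomorphism}, linearity and positivity of $\Phi$ are clear, and $R(T)$-linearity is again averaging.

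\emph{Injectivity and the reverse norm inequality.} Let $f \in L^\infty(T)$ and set $\alpha := \norm{\Phi(f)}_{\hat{L^1(T)}}$. For each component $p$ of $e$, test with $g = \operatorname{sgn}$-type elements: take $g = (P_{f^+}(e) - P_{f^-}(e))\,p \in L^\infty(T) \subseteq L^1(T)$. This gives $T(\abs{f}p) \leq \alpha\, Tp$ for all $p \in C_e$.

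To pass from this to $\abs{f} \leq \alpha$, let $q := P_{(\abs{f}-\alpha)^+}(e)$. Then $T((\abs{f}-\alpha)q) \leq 0$, while $(\abs{f}-\alpha)q \geq 0$. Strict positivity gives $(\abs{f}-\alpha)^+ = 0$, hence $\abs{f} \leq \alpha$ and $\norm{f}_{T,\infty} \leq \alpha$. Thus $\Phi$ is isometric, hence injective. The same argument, with $g = q$ where $q = P_{f^+}(e)$ and $P_{f^-}(e)$, shows $\Phi$ is bipositive: $\Phi(f) \geq 0$ implies $f^- = 0$.

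\emph{Surjectivity (main obstacle).} Let $\varphi \in \hat{L^1(T)}$ with $\abs{\varphi(g)} \leq k\norm{g}_{T,1}$. Restrict $\varphi$ to $L^2(T) \subseteq L^1(T)$. Since $\norm{g}_{T,1} \leq \norm{g}_{T,2}$ by Hölder with $e$, the restriction lies in $\hat{L^2(T)}$. By \cref{thm:dual-of-l2} there is $f \in L^2(T)$ with $\varphi(g) = T(fg)$ for all $g \in L^2(T)$.

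The argument above (which used only test functions in $L^\infty(T) \subseteq L^2(T)$) gives $\abs{f} \leq k$, so $f \in L^\infty(T)$. It remains to show $\varphi = \Phi(f)$ on all of $L^1(T)$. Both are order continuous: $\varphi$ by \cref{thm:strong-dual-order-continuity} with $p=1$, and $\Phi(f)$ for the same reason. For $g \in L^1(T)_+$ we have $g \wedge ne \uparrow g$ with $g \wedge ne \in L^2(T)$, and splitting $g = g^+ - g^-$ yields agreement. The expected delicate points are checking $\norm{\cdot}_{T,1} \leq \norm{\cdot}_{T,2}$ (so that the restriction lies in $\hat{L^2(T)}$) and the order-continuity extension; neither should be serious.

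Finally, a bijective bipositive linear map is a Riesz isomorphism, so $\Phi$ is an $R(T)$-linear isometric Riesz isomorphism.
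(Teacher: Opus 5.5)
Your proposal is correct and follows the paper's overall strategy: restrict $\varphi$ to $L^2(T)$, invoke the $L^2(T)$ representation, show the density lies in $L^\infty(T)$ by testing on components of $e$ and using strict positivity, and extend to $L^1(T)$ by order continuity. The differences are minor streamlinings. Your test function $(P_{f^+}(e)-P_{f^-}(e))p$ yields $T(\abs{f}p)\le k\,Tp$ directly, which avoids the paper's detour through $\abs{h}^2\le k\abs{h}$ and the canonical partial inverse. You also obtain the isometry (hence injectivity) and bipositivity directly for every $f\in L^\infty(T)$, rather than from the uniqueness and positivity in the $L^2(T)$ representation, and you extend via the truncations $g\wedge ne$ instead of Freudenthal step functions.
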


\begin{proof}
    \ref{it:thm:dual-of-l1:well-defined}
    The additivity of the map is straightforward and $R(T)$-homogeneity follows from the averaging property of $T$.
    To see the boundedness, observe that there exists $k \in R(T)_+$ such that $\abs{f} \leq k$.
    For all $g \in L^1(T)$, we have
    \[
        \abs{T(fg)}
        \leq T(\abs{f}\abs{g})
        \leq T(k \abs{g})
        = k T(\abs{g})
        = k \norm{g}_{T,1}.
    \]

    \smallskip

    \ref{it:thm:dual-of-l1:isomorphism}
    The additivity of $\Phi$ is straightforward and the $R(T)$-homogeneity follows from the averaging property of $T$.
    Hence, $\Phi$ is $R(T)$-linear.
    Next, we show that $\Phi$ is surjective in multiple steps.
    Across all steps, let $\varphi \in \hat{L^1(T)}$ and $k \coloneqq \norm{\varphi}_{\hat{L^1(T)}}$.

    \smallskip

    \textit{Step 1: We have $\varphi|_{L^2(T)} \in \hat{L^2(T)}$ with $\norm{\varphi|_{L^2(T)}}_{\hat{L^2(T)}} \leq k$.}

    \smallskip

    By the Hölder inequality given in \cref{thm:hoelder}, we obtain for all $f \in L^2(T)$ that
    \[
        \abs{\varphi(f)}
        \leq k \norm{f}_{T,1}
        \leq k \norm{f}_{T,2} \norm{e}_{T,2}
        \leq k \norm{f}_{T,2}.
    \]
    This implies that $\varphi|_{L^2(T)}$ is $\norm{\cdot}_{T,2}$-bounded and $\norm{\varphi|_{L^2(T)}}_{\hat{L^2(T)}} \leq k$.

    \smallskip

    \textit{Step 2: There exists a unique $h \in L^2(T)$ such that, for all $f \in L^2(T)$, one has $\varphi(f) = T(fh)$. If $\varphi$ is positive, then $h$ is positive.}

    \smallskip

    This is a direct consequence of \cref{thm:dual-of-l2}.

    \smallskip

    \textit{Step 3: For every component $p$ of $e$, we have $T(p(k\abs{h} - \abs{h}^2)) \geq 0$.}

    \smallskip

    As $L^2(T)$ is an order ideal in $L^1(T)$ and $\abs{ph} \leq \abs{h}$, we have $f \coloneqq ph \in L^2(T)$.
    It follows from step 2 and the $\norm{\cdot}_{T,1}$-boundedness of $\varphi$ that
    \begin{align*}
        T(p\abs{h}^2)
        & = T(ph^2)
          = \abs{T(ph^2)}
          = \abs{T(fh)}
          = \abs{\varphi(f)} \\
        & \leq k \norm{f}_{T,1}
          = k \norm{p h}_{T,1}
          = k T(p \abs{h}),
    \end{align*}
    thus $T(p(k \abs{h} - \abs{h}^2)) \geq 0$. 

    \smallskip

    \textit{Step 4: We have $h \in L^\infty(T)$ with $\norm{h}_{T,\infty} \leq k$.}

    \smallskip

    Taking $p \coloneqq P_{(k\abs{h} - \abs{h}^2)^-}(e)$ in step 3 gives
    \[
        0
        \leq T(p(k \abs{h} - \abs{h}^2))
        = -T((k \abs{h} - \abs{h}^2)^-)
        \leq 0.
    \]
    Since $T$ is strictly positive, we obtain $(k \abs{h} - \abs{h}^2)^- = 0$, which implies $k \abs{h} - \abs{h}^2 \geq 0$, so $\abs{h}^2 \leq k \abs{h}$.
    Using the canonical partial inverse $\abs{h}^{-1} \in E^u$ of $\abs{h}$, we obtain
    \[
        \abs{h}
        = P_{\abs{h}}(e) \abs{h}
        = \abs{h}^{-1} \abs{h}^2
        \leq k \abs{h}^{-1} \abs{h}
        \leq k.
    \]
    Thus, $h \in L^\infty(T)$ and $\norm{h}_{T,\infty} \leq k$.

    \smallskip

    \textit{Step 5: We have $\Phi(h) = \varphi$.}

    \smallskip

    Recall that $\Phi(h)$ and $\varphi$ are order continuous by \cref{thm:strong-dual-order-continuity}.
    Moreover, $\Phi(h)$ and $\varphi$ agree on $L^2(T)$ by step 2.
    By \cref{thm:freudenthal} and the fact that every $e$-step function is an element of $L^2(T)$, we obtain $\Phi(h) = \varphi$.

    \smallskip

    The steps 1 to 5 show that $\Phi$ is surjective.
    For every $f \in L^\infty(T)$ with $\Phi(f) = 0$, the uniqueness in step 2 implies that $f = 0$.
    Hence, $\Phi$ is injective.
    In the proof of the surjectivity, if $\varphi$ was positive, its pre-image $h$ was also positive by step 2. 
    Thus, $\Phi$ is bipositive.
    We conclude that $\Phi$ is a Riesz isomorphism.

    It remains to show that $\Phi$ is isometric.
    Hereby, it suffices to show that the pre-image $h$ constructed in the proof of the surjectivity satisfies $\norm{h}_{T,\infty} = k$.
    We already have $\norm{h}_{T,\infty} \leq k$ by step 4.
    For every $f \in L^1(T)$, we get, using the Hölder inequality in \cref{thm:hoelder},
    \[
        \abs{\varphi(f)}
        = \abs{\Phi(h)(f)}
        = \abs{T(fh)}
        \leq T\left( \abs{fh} \right)
        = \norm{fh}_{T,1}
        \leq \norm{f}_{T,1} \norm{h}_{T,\infty}.
    \]
    Thus, $k = \norm{\varphi}_{\hat{L^1(T)}} \leq \norm{h}_{T,\infty}$.
    Hence, $\norm{h}_{T,\infty} = k$ and $\Phi$ is isometric.
\end{proof}

We apply the integration theory developed in \cref{sec:integration} to obtain a representation for the $T$-strong dual of $L^\infty(T)$.

\begin{theorem} \label{thm:dual-of-linfty}
    Let $(E,e,T)$ be a $T$-universally complete conditional Riesz triple.
    \begin{enumList}
        \item \label{it:thm:dual-of-linfty:integral-in-linfty}
        For every $\mu \in \ba(T)$, one has $\int \cdot \,\rmd\mu \in \hat{L^\infty(T)}$.

        \item \label{it:thm:dual-of-linfty:restrictions-charge}
        For every $\varphi \in \hat{L^\infty(T)}$, one has $\varphi|_{C_e} \in \ba(T)$.
        
        \item \label{it:thm:dual-of-linfty:isomorphism}
        The maps
        \begin{align*}
            \Phi \colon \ba(T) \to \hat{L^\infty(T)}, & \quad \mu \mapsto \left( f \mapsto \int f \,\rmd\mu \right), \\
            \Psi \colon \hat{L^\infty(T)} \to \ba(T), & \quad \varphi \mapsto \varphi|_{C_e},
        \end{align*}
        are mutually inverse $R(T)$-linear isometric Riesz isomorphisms.
    \end{enumList}
\end{theorem}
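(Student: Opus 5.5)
We treat the three parts in order, using the integration theory of \cref{sec:integration} throughout.

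For \ref{it:thm:dual-of-linfty:integral-in-linfty}, let $\mu \in \ba(T)$. By \cref{thm:integral-riesz-hom} and \cref{thm:integral}, $J_\mu = \int \cdot \,\rmd\mu$ is regular and $R(T)$-linear, so only boundedness remains. Let $f \in L^\infty(T)$ and put $\alpha \coloneqq \norm{f}_{T,\infty}$. Then $-\alpha \leq f \leq \alpha$, and positivity of $J_{\abs{\mu}}$ together with $J_{\abs{\mu}} = \abs{J_\mu}$ gives
\[
    \abs{J_\mu(f)} \leq J_{\abs{\mu}}(\abs{f}) \leq J_{\abs{\mu}}(\alpha) = \alpha\,\abs{\mu}(e) = \norm{f}_{T,\infty}\,\norm{\mu}_{\ba(C_e,R(T))}.
\]
Here $\alpha = \alpha e$ is a step function and $J$ is $R(T)$-linear. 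This shows $J_\mu \in \hat{L^\infty(T)}$ and $\norm{\Phi(\mu)} \leq \norm{\mu}$.

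For \ref{it:thm:dual-of-linfty:restrictions-charge}, let $\varphi \in \hat{L^\infty(T)}$. Additivity of $\varphi|_{C_e}$ on disjoint components is immediate from linearity. Order boundedness follows from $\abs{\varphi(p)} \leq k\norm{p}_{T,\infty} \leq k$.

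$T$-absolute continuity is the delicate point. Fix $p \in C_e$. By \cref{thm:inf-norm-of-component}, $\norm{p}_{T,\infty} = P_{Tp}(e)$, so
\[
    \abs{\varphi(p)} \leq k\, P_{Tp}(e) \in B_{Tp}.
\]
Since $B_{Tp}$ is solid, $\varphi(p) \in B_{Tp}$, and hence $\varphi|_{C_e} \in \ba(T)$. This is where \cref{thm:inf-norm-of-component} and \cref{thm:proj-ineq} do the real work.

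For \ref{it:thm:dual-of-linfty:isomorphism}, we show $\Psi\circ\Phi = \id$ and $\Phi\circ\Psi = \id$. The first is \cref{thm:integral}\ref{it:thm:integral:extends-simple-integral} restricted to $C_e$ (see \cref{thm:integral-of-step}\ref{it:thm:integral-of-step:extends-charge}).

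For the second, we first note that $\varphi$ and $J_{\varphi|_{C_e}}$ are both $R(T)$-linear and agree on $C_e$, so they agree on $\calS_e(T)$. Next, for $f \in L^\infty(T)_+$, \cref{thm:conditional-sombrero}\ref{it:thm:conditional-sombrero:approximation-exists} gives step functions $s_n \to f$ $u$-uniformly, say $\abs{f-s_n} \leq \eps_n u$ with $u \in R(T)_+$. Then
\[
    \abs{\varphi(f) - \varphi(s_n)} \leq k\,\norm{f-s_n}_{T,\infty} \leq k\,\eps_n u,
\]
and likewise for $J_{\varphi|_{C_e}}$. Letting $n \to \infty$ shows that both maps agree on $L^\infty(T)_+$, hence on all of $L^\infty(T)$. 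No order continuity is needed here; this matters because $\norm{\cdot}_{T,\infty}$ is not order continuous.

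It remains to identify the structure. $\Phi$ is an $R(T)$-linear Riesz homomorphism by \cref{thm:integral-riesz-hom}, and it is bijective with inverse $\Psi$. Since $\Psi$ is clearly positive, $\Phi$ is bipositive and hence a Riesz isomorphism, and so is $\Psi$.

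For the isometry we already have $\norm{\Phi\mu} \leq \norm{\mu}$. Conversely, by \cref{thm:T-strong-dual-norm-representation}, using that $\Phi$ is a Riesz isomorphism,
\[
    \abs{\mu}(e) = \abs{\Phi\mu}(e) = \sup\Dset{\abs{\Phi\mu(g)}}{\abs{g} \leq e} \leq \norm{\Phi\mu}_{\hat{L^\infty(T)}},
\]
since every $g$ with $\abs{g} \leq e$ satisfies $\norm{g}_{T,\infty} \leq e$. The main obstacle is the absolute continuity in \ref{it:thm:dual-of-linfty:restrictions-charge}, which rests on computing the norm of a component via \cref{thm:inf-norm-of-component}.
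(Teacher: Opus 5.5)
Your proof is correct and follows the paper's argument essentially step by step. It bounds $\abs{\int f\,\rmd\mu}$ by $\abs{\mu}(e)\norm{f}_{T,\infty}$ in (a), derives absolute continuity from $\norm{p}_{T,\infty} = P_{Tp}(e)$ in (b), and uses uniform approximation by $R(T)$-step functions plus a two-sided norm estimate in (c). The deviations are minor and both work: you get $\varphi(s_n)\to\varphi(f)$ from the $\norm{\cdot}_{T,\infty}$-bound for arbitrary $\varphi$ (and for $J_{\varphi|_{C_e}}$ via (a)) instead of restricting to positive $\varphi$, and you obtain the lower isometry bound from $\Phi\abs{\mu} = \abs{\Phi\mu}$ and Riesz--Kantorovich rather than by testing against $p-(e-p)$.
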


\begin{proof}
    \ref{it:thm:dual-of-linfty:integral-in-linfty}
    Let $\mu \in \ba(T)$.
    By \cref{thm:integral}\ref{it:thm:integral:linear}, the integral $\int \cdot \,\rmd\mu$ is $R(T)$-linear. 
    If $\mu \geq 0$, then, for all $f \in L^\infty(T)$, we have
    \[
        \Abs{\int f \,\rmd\mu}
        \leq \int \abs{f} \,\rmd\mu
        \leq \int \norm{f}_{T,\infty} \,\rmd\mu
        = \mu(e) \norm{f}_{T,\infty}.
    \]
    For arbitrary $\mu$, we then obtain
    \[
        \Abs{\int f \,\rmd\mu}
        \leq \Abs{\int f \,\rmd\mu^+} + \Abs{\int f \,\rmd\mu^-}
        \leq \mu^+(e) \norm{f}_{T,\infty} + \mu^-(e) \norm{f}_{T,\infty}
        = \abs{\mu}(e) \norm{f}_{T,\infty}
    \]
    for all $f \in L^\infty(T)$, thus $\int \cdot \,\rmd\mu \in \hat{L^\infty(T)}$.

    \smallskip

    \ref{it:thm:dual-of-linfty:restrictions-charge}
    Let $\varphi \in \hat{L^\infty(T)}$.
    Since $\varphi$ is linear, we have $\varphi(0) = 0$ and $\varphi|_{C_e}$ is additive on disjoint elements of $C_e$.
    As $\varphi$ is order bounded and $C_e \subseteq [0,e]$, it follows that $\varphi|_{C_e}$ is order bounded.
    Let $g \in R(T)$ be such that $\abs{\varphi(f)} \leq g \norm{f}_{T,\infty}$ for all $f \in L^\infty(T)$.
    For all $p \in C_e$, we then have $\varphi(p) \leq g \norm{p}_{T,\infty}$.
    By \cref{thm:inf-norm-of-component}, we get $\norm{p}_{T,\infty} \in B_{Tp}$.
    Since $B_{Tp}$ is a band in $R(T)$, it is, in particular, an algebraic ideal, thus $g \norm{p}_{T,\infty} \in B_{Tp}$, which implies $\varphi(p) \in B_{Tp}$.
    This shows that $\varphi|_{C_e}$ is $T$-absolutely continuous.
    We conclude that $\varphi|_{C_e} \in \ba(T)$.

    \smallskip

    \ref{it:thm:dual-of-linfty:isomorphism}
    It can be verified that both $\Phi$ and $\Psi$ are $R(T)$-linear and positive.
    We show that $\Phi$ and $\Psi$ are mutually inverse, which then also shows that $\Phi$ and $\Psi$ are both Riesz isomorphisms.
    Indeed, for all $\mu \in \ba(T)$ and $p \in C_e$, we have
    \[
        \Psi(\Phi(\mu))(p)
        = \Phi(\mu)(p)
        = \int p \,\rmd\mu
        = \mu(p),
    \]
    thus $\Psi \circ \Phi = \id_{\ba(T)}$.
    In order to show that $\Phi \circ \Psi = \id_{\hat{L^\infty(T)}}$, let $\varphi \in \hat{L^\infty(T)}_+$.
    For all $p \in C_e$, we get 
    \[
        \Phi(\Psi(\varphi))(p)
        = \int p \,\rmd(\Psi(\varphi))
        = \Psi(\varphi)(p)
        = \varphi(p).
    \]
    Since $\Phi(\Psi(\varphi))$ and $\varphi$ are $R(T)$-linear, it follows that $\Phi(\Psi(\varphi))$ and $\varphi$ agree on $\calS_e(T)$.
    Let $f \in L^\infty(T)_+$.
    By \cref{thm:conditional-sombrero}\ref{it:thm:conditional-sombrero:approximation-exists}, there exists an increasing sequence $(f_n)_{n \in \nat}$ in $\calS_e(T)_+$ such that $f_n \uparrow f$ $u$-uniformly for some $u \in R(T)_+$.
    Since $\varphi$ is positive, we have $\varphi(f_n) \to \varphi(f)$ $\varphi(u)$-uniformly and, thus, in order.
    \cref{thm:conditional-sombrero}\ref{it:thm:conditional-sombrero:continuity} implies $\Phi(\Psi(\varphi))(f_n) = I_{\Psi(\varphi)}(f_n) \to \int f \,\rmd(\Psi(\varphi)) = \Phi(\Psi(\varphi))(f)$ $(u(\Psi(\varphi)(e)))$-uniformly and, thus, in order.
    As $\Phi(\Psi(\varphi))(f_n) = \varphi(f_n)$, it follows that $\Phi(\Psi(\varphi))(f) = \varphi(f)$, hence $\Phi \circ \Psi = \id_{\hat{L^\infty(T)}}$.

    It remains to establish that $\Phi$ and $\Psi$ are isometric.
    Since $\Psi = \Phi^{-1}$, it is sufficient to show that $\Phi$ is isometric.
    To this end, let $\mu \in \ba(T)$.
    We have
    \begin{align*}
        \Norm{\Phi(\mu)}_{\hat{L^\infty(T)}}
        & = \sup\Dset{\Abs{\Phi(\mu)(f)}}{f \in L^\infty(T), \norm{f}_{T,\infty} \leq e} \\
        & \leq \sup\Dset{\Phi(\abs{\mu})(\abs{f})}{f \in L^\infty(T), \norm{f}_{T,\infty} \leq e} \\
        & \leq \Phi(\abs{\mu})(e)
          = \abs{\mu}(e)
          = \norm{\mu}_{\ba(T)}.
    \end{align*}
    Conversely, for all $p \in C_e$, we have $\abs{p - (e - p)} \leq e$, thus $\norm{p - (e - p)}_{T,\infty} \leq e$
    It follows that
    \begin{align*}
        \Norm{\Phi(\mu)}_{\hat{L^\infty(T)}}
        & \geq \Abs{\Phi(\mu)(p - (e - p))}
          \geq \Phi(\mu)(p - (e - p)) \\
        & = \Phi(\mu)(p) - \Phi(\mu)(e - p)
          = \mu(p) - \mu(e - p).
    \end{align*}
    By \cref{thm:o-bounded-signed-char-riesz-space}, taking the supremum over all $p$ yields $\norm{\Phi(\mu)}_{\hat{L^\infty(T)}} \geq \abs{\mu}(e) = \norm{\mu}_{\ba(T)}$.
    We conclude that $\Phi$ is isometric.
\end{proof}

\begin{remark}
    The Theorems \ref{thm:dual-of-l2}, \ref{thm:dual-of-l1}, and \ref{thm:dual-of-linfty} give a complete description of the $T$-strong duals of $L^p(T)$ for $p \in \set{1,2,\infty}$.
    We conjecture that, for $p \in (1,\infty)$ and its conjugate exponent $q$, the $T$-strong dual of $L^p(T)$ is isometrically Riesz isomorphic to $L^q(T)$ via
    \[
        L^q(T) \to \hat{L^p(T)}, \quad f \mapsto \left( g \mapsto T(fg) \right).
    \]
\end{remark}

\section{Conditional expectations constructed from countable partitions} \label{sec:application}

In this section, we fix a $\sigma$-finite measure space $(\Omega,\calA,\mu)$ and an at most countable partition $(\Omega_i)_{i \in I}$ of $\Omega$ into $\calA$-measurable sets with $\mu(\Omega_i) \in (0,\infty)$ for all $i \in I$.
Moreover, we consider the linear operator
\[
    T \colon L^1(\Omega,\calA,\mu) \to L^1(\Omega,\calA,\mu), \quad f \mapsto \sum_{i \in I} \left( \frac{1}{\mu(\Omega_i)} \int_{\Omega_i} f \,\rmd\mu \right) \bfone_{\Omega_i}.
\]
First, we collect some basic observations without proof.

\begin{proposition} \label{prop:app-cond-exp}
    \begin{enumList}
        \item $T$ is a conditional expectation operator on $L^1(\Omega,\calA,\mu)$.
        \item $L^1(T) = \dset{f \in L^0(\Omega,\calA,\mu)}{\forall i \in I : f \bfone_{\Omega_i} \in L^1(\Omega,\calA,\mu)}$.
        \item The maximal extension of $T$ to $L^1(T)$ is given by
        \[
            T \colon L^1(T) \to L^1(T), \quad f \mapsto \sum_{i \in I} \left( \frac{1}{\mu(\Omega_i)} \int_{\Omega_i} f \,\rmd\mu \right) \bfone_{\Omega_i}.
        \]
        \item $R(T) = \dset{g \in L^1(T)}{\forall i \in I : \text{$g$ is constant on $\Omega_i$ $\mu$-a.e.}}$.
        \item \label{it:prop:app-cond-exp:l-infty}
        $L^\infty(T) = \dset{f \in L^0(\Omega,\calA,\mu)}{\forall i \in I : f \bfone_{\Omega_i} \in L^\infty(\Omega,\calA,\mu)}$.
    \end{enumList}
\end{proposition}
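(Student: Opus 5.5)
Although the paper states these facts without proof, I would verify them directly, in the order (a), (d), (b)–(c), (e).

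For (a), write $\calA_0$ for the $\sigma$-algebra generated by $(\Omega_i)_{i \in I}$. Then $T$ is the classical conditional expectation with respect to $\calA_0$: on each $\Omega_i$ it replaces $f$ by its average there, and this defines an element of $L^1$ because $\norm{Tf}_1 \leq \sum_i \int_{\Omega_i} \abs{f}\,\rmd\mu = \norm{f}_1$. The properties required by the definition then follow routinely.
\begin{itemize}
\item Positivity and linearity are clear.
\item $T^2 = T$, since $T\bfone_{\Omega_i} = \bfone_{\Omega_i}$.
\item Order continuity follows from monotone convergence on each $\Omega_i$.
\item $R(T)$ consists of the $L^1$-functions that are a.e.\ constant on each $\Omega_i$. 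It is a Riesz subspace, and it is Dedekind complete because it is Riesz isomorphic to the ideal $\dset{(c_i)}{\sum_i \abs{c_i}\mu(\Omega_i) < \infty}$ of $\real^I$.
\item Strict positivity holds because $Tf = 0$ with $f \geq 0$ forces $\int_{\Omega_i} f\,\rmd\mu = 0$ for every $i$, hence $f = 0$ a.e.
\end{itemize}
For the weak order unit I would take $e \coloneqq \sum_i \mu(\Omega_i)^{-1} 2^{-i} \bfone_{\Omega_i}$, enumerating $I \subseteq \nat$. This is strictly positive a.e., lies in $R(T)$ and satisfies $Te = e$. If $\bfone \in L^1$, one can simply take $e = \bfone$.

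For (b) and (c), I would use the axiomatic characterization of \cref{thm:T-universal-completion}. Let $\tilde E$ be the right-hand side of (b) and define $\tilde T$ on $\tilde E$ by the same formula. The proof that $(\tilde E, e, \tilde T)$ is a conditional Riesz triple is the same as in (a), with $R(\tilde T) \cong \real^I$. Since $\tilde E$ is an ideal of $L^0$ containing the simple functions supported in single cells, $L^1$ is an order dense ideal in it, and $\tilde T$ extends $T$. The main obstacle is $\tilde T$-universal completeness. Take an upward directed set $A \subseteq \tilde E$ with $\tilde T[A]$ bounded above in $L^0 = E^u$; I may assume $A \subseteq \tilde E_+$ after fixing some $a_0 \in A$ and passing to $\dset{a - a_0}{a \in A,\ a \geq a_0}$ together with $a \vee a_0$. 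The bound gives, for each $i$, a finite bound $c_i$ on $\int_{\Omega_i} a\,\rmd\mu$ over $a \in A$. In the $\sigma$-finite space $L^0$, the supremum $s$ of $A$ is attained along an increasing sequence in $A$, so the monotone convergence theorem yields $\int_{\Omega_i} s\,\rmd\mu \leq c_i < \infty$. Hence $s \in \tilde E$, and it is also the supremum of $A$ there. Uniqueness in \cref{thm:T-universal-completion} then identifies $L^1(T)$ with $\tilde E$, and the extended operator with the formula in (c).

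Part (d) follows because the range of the formula in (c) consists of the functions that are constant on each cell, and every such function in $L^1(T)$ is fixed by $T$.

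For (e), $L^\infty(T)$ consists of the $f \in L^1(T)$ dominated by some $g \in R(T)_+$, that is, by some $\sum_i c_i \bfone_{\Omega_i}$. This condition is equivalent to $f\bfone_{\Omega_i} \in L^\infty$ for each $i$: in one direction take $c_i = \norm{f\bfone_{\Omega_i}}_\infty$, and the converse is immediate. Such an $f$ automatically lies in $L^1(T)$ because $\mu(\Omega_i) < \infty$.
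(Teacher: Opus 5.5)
The paper lists these facts without proof, so there is no argument of the authors to compare with. Your verification is correct and supplies what the paper omits. Deriving (b)--(c) from the axiomatic characterization together with the uniqueness part of \cref{thm:T-universal-completion} is the natural route. Your choice $e=\sum_i \mu(\Omega_i)^{-1}2^{-i}\bfone_{\Omega_i}$ is also a worthwhile correction of the paper's tacit use of $\bfone_\Omega$. When $\mu(\Omega)=\infty$ one has $\bfone_\Omega\notin L^1(\Omega,\calA,\mu)$, so some such choice is needed in (a).

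One step needs to be stated more carefully. In proving $\tilde T$-universal completeness you speak of ``the supremum $s$ of $A$'' in $L^0$. However, $A$ is not yet known to be bounded above in $L^0$; that is essentially what you are proving. An upward directed subset of $L^0$ need not have a supremum there.

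The fix is to let $s\colon\Omega\to[0,\infty]$ be the essential supremum of $A$.
\begin{enumerate}
\item By $\sigma$-finiteness, $s$ is the pointwise supremum of a countable subfamily of $A$. Using directedness, you can replace this subfamily by an increasing sequence $(a_n)$ in $A$ with $a_n\uparrow s$ a.e.
\item Monotone convergence then gives $\int_{\Omega_i}s\,\rmd\mu=\lim_n\int_{\Omega_i}a_n\,\rmd\mu\le c_i<\infty$. Hence $s$ is finite a.e. and $s\in\tilde E$.
\item Finally, $s$ is the supremum of $A$ in $\tilde E$. It dominates every $a\in A$ a.e. by construction. Any upper bound $h\in\tilde E$ of $A$ satisfies $h\ge a_n$ for all $n$, hence $h\ge s$.
\end{enumerate}

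The same countable-subfamily reduction also justifies your appeal to monotone convergence for the order continuity of $T$ and $\tilde T$ along nets. With this adjustment the argument is complete.
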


We describe the space $L^\infty(\Omega_i,\calA_i,\mu|_{\calA_i})$ as $L^\infty(T_i)$ for an expectation operator $T_i$.
Consider the linear operator
\[
    T_i \colon L^1(\Omega_i,\calA_i,\mu|_{\calA_i}) \to L^1(\Omega_i,\calA_i,\mu|_{\calA_i}), \quad f \mapsto \left( \frac{1}{\mu(\Omega_i)} \int_{\Omega_i} f \,\rmd\mu \right) \bfone_{\Omega_i}.
\]
In the following lemma, the statements (a), (c), and (d) are straightforward.
The proof for (b) is similar to the proof of the Dedekind completeness of $L^1(\Omega_i,\calA_i,\mu|_{\calA_i})$, cf. \cite[Example 12.5(iii)]{Zaanen-1997}, and will be omitted.

\begin{lemma} \label{thm:classical-Linfty-as-abstract}
    Let $i \in I$.
    \begin{enumList}
        \item $T_i$ is an expectation operator.
        \item $L^1(\Omega_i,\calA_i,\mu|_{\calA_i})$ is $T_i$-universally complete.
        \item $R(T_i) = \real \cdot \bfone_{\Omega_i}$.
        \item $L^\infty(T_i) = L^\infty(\Omega_i,\calA_i,\mu|_{\calA_i})$.
    \end{enumList}
\end{lemma}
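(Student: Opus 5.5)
The plan is to verify the four items in the order (a), (c), (b), (d), since (c) and (d) rest on (a) and the explicit form of $R(T_i)$.

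For (a), write $E_i \coloneqq L^1(\Omega_i,\calA_i,\mu|_{\calA_i})$ and $\bfone \coloneqq \bfone_{\Omega_i}$. Linearity and positivity of $T_i$ are clear. $T_i\bfone = \bfone$ because $\mu(\Omega_i)\in(0,\infty)$, and $T_i^2 = T_i$ because $T_i$ maps into constants and fixes constants. Order continuity follows from the monotone convergence theorem: if $f_\alpha\downarrow 0$ in $E_i$, then, since $E_i$ is $\sigma$-order continuous and has the countable sup property, it suffices to treat a decreasing sequence, for which $\int f_n\downarrow 0$. Strict positivity holds because $\int f\,\rmd\mu>0$ for $f\geq 0$, $f\neq 0$. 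Finally, $\bfone$ is a weak order unit, since $f\wedge n\bfone\uparrow f$ pointwise a.e. and hence in order.

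Item (c) is immediate from the definition: the range consists exactly of the constant multiples of $\bfone$, and this is a Riesz subspace isomorphic to $\real$, hence Dedekind complete. This completes (a).

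For (b), let $A\subseteq E_i$ be nonempty and upwards directed with $j[T_i[A]]$ bounded above in $E_i^u = L^0(\Omega_i)$. Because $T_i[A]\subseteq\real\bfone$, boundedness means $\sup_{f\in A}\int f\,\rmd\mu = c<\infty$. Following Zaanen's argument, I would do the following.
\begin{enumerate}
\item Fix $f_0\in A$ and pass to $A' = \{f\in A : f\geq f_0\}$, which is cofinal by directedness.
\item Choose an increasing sequence $(f_n)$ in $A'$ with $\int f_n\uparrow c$, using directedness to make it increasing.
\item Set $f = \sup_n f_n$ pointwise; by monotone convergence $f\in L^1$ with $\int f = c$.
\item Show $f$ is an upper bound of $A$. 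For any $g\in A$, pick $h_n\in A$ with $h_n\geq f_n\vee g$ and $h_n$ increasing. Then $\int \sup_n h_n = c = \int f$ and $\sup_n h_n\geq f$, so $\sup_n h_n = f$ a.e., whence $g\leq f$.
\item Show $f$ is the least upper bound: any upper bound of $A$ dominates every $f_n$, hence dominates $f$.
\end{enumerate}
This is the main obstacle. The delicate points are the reduction to a countable, increasing subsequence and checking that the bound from $T_i$ really yields finiteness of $c$. If $A$ contains functions unbounded below, the reduction to $A'$ handles it.

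For (d), since $E_i$ is $T_i$-universally complete, $L^1(T_i) = E_i$. By (c), $R(T_i)_+ = \{\alpha\bfone : \alpha\geq 0\}$, so $L^\infty(T_i) = \{f\in E_i : |f|\leq\alpha\bfone \text{ for some } \alpha\geq 0\}$. This is precisely the set of essentially bounded functions, each of which lies in $L^1$ because $\mu(\Omega_i)<\infty$. Hence $L^\infty(T_i) = L^\infty(\Omega_i,\calA_i,\mu|_{\calA_i})$.
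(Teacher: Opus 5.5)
Your proposal is correct and follows the route the paper indicates. The paper calls (a), (c), (d) straightforward and, for (b), only refers to Zaanen's proof that $L^1$ is Dedekind complete (Example 12.5(iii)); your steps 1--5 carry out exactly that argument: pick an increasing sequence whose integrals approach the supremum, take its pointwise supremum, and use monotone convergence to show it dominates all of $A$. The finiteness of $c$, which you flag, does hold: any upper bound $g\in L^0$ of $T_i[A]\subseteq\real\bfone_{\Omega_i}$ is bounded by some constant $M$ on a set of positive measure, so $\sup_{f \in A}\int f\,\rmd\mu\leq M\mu(\Omega_i)<\infty$.
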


\begin{remark}
    Let $i \in I$.
    We denote by $L^\infty(\Omega_i,\calA_i,\mu|_{\calA_i})'$ the norm dual space of $L^\infty(\Omega_i,\calA_i,\mu|_{\calA_i})$.
    Note that $\hat{L^\infty(T_i)}$ consists of linear maps with values in $R(T_i) = \dset{\lambda \bfone_{\Omega_i}}{\lambda \in \real}$ and that $L^\infty(\Omega_i,\calA_i,\mu|_{\calA_i})'$ consists of linear maps with values in $\real$.
    Since $R(T_i)$ and $\real$ are canonically Riesz isomorphic and isometric, the $T$-strong dual $\hat{L^\infty(T_i)}$ and the norm dual $L^\infty(\Omega_i,\calA_i,\mu|_{\calA_i})'$ are also canonically Riesz isomorphic and isometric.
    More precisely, the map
    \[
        L^\infty(\Omega_i,\calA_i,\mu|_{\calA_i})' \to \hat{L^\infty(T_i)}, \quad \varphi \mapsto \left( f \mapsto \varphi(f) \bfone_{\Omega_i} \right),
    \]
    is an isometric Riesz isomorphism.
\end{remark}

Note that every $R(T_i)$-module $M$ can be viewed as an $R(T)$-module via the multiplication
\[
    g f \coloneqq (g|_{\Omega_i}) f \qquad (g \in R(T), f \in M).
\]
In particular, the $R(T_i)$-modules $L^\infty(T_i)$ and $\hat{L^\infty(T_i)}$ may be viewed as $R(T)$-modules.
Now, it is a direct consequence of \cref{prop:app-cond-exp}\ref{it:prop:app-cond-exp:l-infty} that
\begin{equation}
    \label{eq:app-decomp} 
    L^\infty(T) \cong \prod_{i \in I} L^\infty(\Omega_i,\calA_i,\mu|_{\calA_i})
\end{equation}
as $R(T)$-modules, where $\calA_i \coloneqq \dset{A \cap \Omega_i}{A \in \calA}$.
As a direct consequence of \eqref{eq:app-decomp} and \cref{thm:classical-Linfty-as-abstract}, we obtain that
\[
    L^\infty(T) \cong \prod_{i \in I} L^\infty(T_i).
\]
We will show that this decomposition transfers to the dual spaces, i.e., we show that
\[
    \hat{L^\infty(T)} \cong \prod_{i \in I} \hat{L^\infty(T_i)}.
\]
We show that the spaces are isomorphic as Riesz spaces, as $R(T)$-modules, and as $R(T)$-normed spaces.
In order to define an approriate $R(T)$-valued norm on $\prod_{i \in I} \hat{L^\infty(T_i)}$, we need the following notation.

\begin{definition} \label{def:zero-extension}
    For $i \in I$ and $f \in L^\infty(\Omega_i,\calA_i,\mu|_{\calA_i})$, we denote
    \[
        \overline{f}^i \colon \Omega \to \real, \quad \omega \mapsto
        \begin{cases}
            f(\omega) & \text{if } \omega \in \Omega_i, \\
            0 & \text{if } \omega \not\in \Omega_i.
        \end{cases}
    \]
\end{definition}

The following theorem is straightforward.

\begin{theorem}
    \begin{enumList}
        \item The map    
        \[
            \Norm{\cdot} \colon \prod_{i \in I} \hat{L^\infty(T_i)} \to R(T), \quad (\varphi_i)_{i \in I} \mapsto \sum_{i \in I} \overline{\Norm{\varphi_i}_{\hat{L^\infty(T_i)}}}^i = \sup_{i \in I} \overline{\Norm{\varphi_i}_{\hat{L^\infty(T_i)}}}^i,
        \]
        is an $R(T)$-valued norm on $\prod_{i \in I} \hat{L^\infty(T_i)}$.

        \item The map
        \[
            \Norm{\cdot} \colon \prod_{i \in I} \ba(T_i) \to R(T), \quad (\mu_i)_{i \in I} \mapsto \sum_{i \in I} \overline{\Norm{\mu_i}_{\ba(T_i)}}^i = \sup_{i \in I} \overline{\Norm{\mu_i}_{\ba(T_i)}}^i,
        \]
        is an $R(T)$-valued norm on $\prod_{i \in I} \ba(T_i)$.
    \end{enumList}
\end{theorem}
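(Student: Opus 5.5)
We have to show that both maps are $R(T)$-valued norms. The key point is how the sum $\sum_{i \in I} \overline{x_i}^i$ makes sense in $R(T)$ and coincides with the supremum. For each $i$, $\Norm{\varphi_i}_{\hat{L^\infty(T_i)}} \in R(T_i)_+ = \real_+ \bfone_{\Omega_i}$ by \cref{thm:classical-Linfty-as-abstract}. Its zero extension $\overline{\cdot}^i$ is therefore a nonnegative multiple of $\bfone_{\Omega_i}$, hence lies in $R(T)$ by \cref{prop:app-cond-exp}. These extensions are pairwise disjoint, because the $\Omega_i$ are disjoint. A function that is constant on each $\Omega_i$ is measurable and belongs to $L^1(T)$, since its restriction to each $\Omega_i$ is integrable ($\mu(\Omega_i)<\infty$). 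So the pointwise countable sum of disjoint elements exists in $R(T)$. It equals the supremum because, for pairwise disjoint positive elements, finite sums equal finite suprema and the order limit agrees with the pointwise a.e.\ sum. The map is thus well defined and positive.

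Next we verify the three norm axioms componentwise, using that each $\Norm{\cdot}_{\hat{L^\infty(T_i)}}$ is an $R(T_i)$-valued norm by the theorem on $\Norm{\cdot}_{\hat{L^p(T)}}$ applied to the triple from \cref{thm:classical-Linfty-as-abstract}.

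\emph{Triangle inequality.} Since $\overline{\cdot}^i$ is additive and monotone, we have $\overline{\Norm{\varphi_i+\psi_i}}^i \le \overline{\Norm{\varphi_i}}^i + \overline{\Norm{\psi_i}}^i$ for each $i$. Summing (equivalently, comparing on each $\Omega_i$) gives the inequality in $R(T)$.

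\emph{Definiteness.} If the norm is $0$, then its restriction to each $\Omega_i$ vanishes. Hence $\Norm{\varphi_i}=0$ for all $i$, so every $\varphi_i=0$.

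\emph{Absolute homogeneity.} Take $g \in R(T)$ and write $g|_{\Omega_i} = c_i \bfone_{\Omega_i}$ with $c_i \in \real$. The module action gives $(g\varphi)_i = (g|_{\Omega_i})\varphi_i$. Thus $\overline{\Norm{g|_{\Omega_i}\varphi_i}}^i = \overline{\abs{g|_{\Omega_i}}\,\Norm{\varphi_i}}^i = \abs{g}\,\overline{\Norm{\varphi_i}}^i$, where the last identity holds because multiplication in $R(T)$ is pointwise and $\overline{\cdot}^i$ vanishes off $\Omega_i$. Summing over $i$, and using that multiplication by $\abs{g}$ is an order continuous operation on $R(T)$ (an $f$-algebra multiplication), yields $\abs{g}\Norm{(\varphi_i)}$.

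Part (b) is proved identically, with $\Norm{\cdot}_{\ba(T_i)}$, which is an $R(T_i)$-valued norm by the theorem on $\ba(C_e,R(T))$; no other change is needed.

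The only mildly delicate step is justifying that the infinite sum or supremum of disjoint elements lies in $R(T)$ and that multiplication by $\abs{g}$ commutes with it. I would handle both by arguing pointwise almost everywhere on each $\Omega_i$, using the concrete description of $R(T)$ in \cref{prop:app-cond-exp}.
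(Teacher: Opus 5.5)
Your proposal is correct and is exactly the routine verification the paper has in mind; the paper calls this theorem straightforward and gives no proof. Your argument supplies that verification: the values are countable sums of pairwise disjoint nonnegative multiples of $\bfone_{\Omega_i}$, so they lie in $R(T)$ and coincide with the supremum, and the three norm axioms reduce, on each $\Omega_i$, to those of the $R(T_i)$-valued norms on $\hat{L^\infty(T_i)}$ and $\ba(T_i)$.
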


\begin{theorem} \label{thm:t-strong-dual-of-direct-sums}
    \begin{enumList}
        \item \label{it:thm:t-strong-dual-of-direct-sums:phi-well-defined}
        For all $(\varphi_i)_{i \in I} \in \prod_{i \in I} \hat{L^\infty(T_i)}$, the map
        \[
            \varphi \colon L^\infty(T) \to R(T), \quad f \mapsto \sum_{i \in I} \overline{\varphi_i(f|_{\Omega_i})}^i,
        \] 
        is an element of $\hat{L^\infty(T)}$ with $\norm{\varphi}_{\hat{L^\infty(T)}} \leq \norm{(\varphi_i)_{i \in I}}$.
        
        \item \label{it:thm:t-strong-dual-of-direct-sums:psi-well-defined}
        For $\varphi \in \hat{L^\infty(T)}$ and $i \in I$, consider the map
        \[
            \varphi_i \colon L^\infty(T_i) \to R(T_i), \quad f \mapsto \varphi\left( \overline{f}^i \middle)\right|_{\Omega_i}.
        \]
        Then $\varphi_i \in \hat{L^\infty(T_i)}$ with $\norm{(\varphi_i)_{i \in I}} \leq \norm{\varphi}_{\hat{L^\infty(T)}}$.
        
        \item \label{it:thm:t-strong-dual-of-direct-sums:inverse}
        The maps
        \begin{align*}
            \Phi \colon \prod_{i \in I} \hat{L^\infty(T_i)} \to \hat{L^\infty(T)}, & \quad (\varphi_i)_{i \in I} \mapsto \left( f \mapsto \sum_{i \in I} \overline{\varphi_i(f|_{\Omega_i})}^i \right), \\
            \Psi \colon \hat{L^\infty(T)} \to \prod_{i \in I} \hat{L^\infty(T_i)}, & \quad \varphi \mapsto \left( f \mapsto \varphi\left(\overline{f}^i \middle)\right|_{\Omega_i} \right)_{i \in I},
        \end{align*}
        are mutually inverse $R(T)$-linear isometric Riesz isomorphisms. 
    \end{enumList}
\end{theorem}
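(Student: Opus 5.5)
We prove the three parts in order, working coordinatewise on the partition $(\Omega_i)_{i\in I}$. In the concrete setting, $R(T)$ consists of functions that are constant a.e.\ on each $\Omega_i$. Hence $R(T)$ is canonically identified with $\real^I$, and multiplication by $g\in R(T)$ acts on the $i$-th block as multiplication by the scalar $g|_{\Omega_i}$.

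For \ref{it:thm:t-strong-dual-of-direct-sums:phi-well-defined}, let $(\varphi_i)_{i\in I}\in\prod_{i\in I}\hat{L^\infty(T_i)}$ and fix $f\in L^\infty(T)$.
\begin{enumItem}
\item By \cref{prop:app-cond-exp}\ref{it:prop:app-cond-exp:l-infty}, $f|_{\Omega_i}\in L^\infty(T_i)$. Each $\overline{\varphi_i(f|_{\Omega_i})}^i$ is constant on $\Omega_i$ and vanishes off $\Omega_i$. The sum over $i$ is therefore a disjoint sum, well defined pointwise, and it lies in $R(T)$ by the description of $R(T)$ in \cref{prop:app-cond-exp}.
\item Additivity is clear. $R(T)$-homogeneity reduces on $\Omega_i$ to $\real$-homogeneity of $\varphi_i$, since $(gf)|_{\Omega_i}=g|_{\Omega_i}f|_{\Omega_i}$ with $g|_{\Omega_i}$ a constant.
\item For regularity, write $\varphi_i=\varphi_i^+-\varphi_i^-$ and note that $\varphi$ is the difference of the two positive maps built from $(\varphi_i^+)$ and $(\varphi_i^-)$.
\item For boundedness, $\norm{f}_{T,\infty}$ restricted to $\Omega_i$ equals $\norm{f|_{\Omega_i}}_{T_i,\infty}$, both being the essential sup on $\Omega_i$. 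Then \cref{thm:T-strong-dual-norm-representation}\ref{it:thm:T-strong-dual-norm-representation:inf-attained} for each $\varphi_i$ gives, blockwise, $\abs{\varphi(f)}\le \norm{(\varphi_i)}\,\norm{f}_{T,\infty}$. Hence $\norm{\varphi}\le\norm{(\varphi_i)}$.
\end{enumItem}

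For \ref{it:thm:t-strong-dual-of-direct-sums:psi-well-defined}, let $\varphi\in\hat{L^\infty(T)}$ and define $\varphi_i$ as in the statement.
\begin{enumItem}
\item Linearity and regularity of $\varphi_i$ are inherited from $\varphi$ by composing with the positive linear maps $f\mapsto\overline{f}^i$ and $h\mapsto h|_{\Omega_i}$.
\item For boundedness, let $k=\norm{\varphi}_{\hat{L^\infty(T)}}$. Then $\abs{\varphi_i(f)}\le (k\norm{\overline f^i}_{T,\infty})|_{\Omega_i}=k|_{\Omega_i}\norm{f}_{T_i,\infty}$, so $\norm{\varphi_i}\le k|_{\Omega_i}$. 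This gives $\norm{(\varphi_i)}\le k$.
\item The key observation is locality: $\varphi(\overline{f}^i)$ is supported in $\Omega_i$. To see this, write $\overline f^i=\bfone_{\Omega_i}\overline f^i$ with $\bfone_{\Omega_i}\in R(T)$, and use $R(T)$-homogeneity to get $\varphi(\overline f^i)=\bfone_{\Omega_i}\varphi(\overline f^i)$. This locality is what makes $\Psi$ lose no information.
\end{enumItem}

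For \ref{it:thm:t-strong-dual-of-direct-sums:inverse}, we first check that $\Psi\circ\Phi=\id$. For $f\in L^\infty(T_i)$, one has $\Phi((\varphi_j))(\overline f^i)|_{\Omega_i}=\varphi_i(f)$, because $\overline f^i|_{\Omega_j}=0$ for $j\ne i$.

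The converse $\Phi\circ\Psi=\id$ is the main obstacle. We need
\[
\varphi(f)=\sum_{i\in I}\overline{\varphi(\overline{f|_{\Omega_i}}^i)|_{\Omega_i}}^i ,
\]
and $I$ may be countably infinite, so finite additivity alone does not suffice. The plan is to use $R(T)$-homogeneity again. Since $\overline{f|_{\Omega_i}}^i=\bfone_{\Omega_i}f$, we get
\[
\bfone_{\Omega_i}\varphi(f)=\varphi(\bfone_{\Omega_i}f)=\varphi(\overline{f|_{\Omega_i}}^i).
\]
So on $\Omega_i$ both sides of the displayed identity agree. Because $(\Omega_i)_{i\in I}$ partitions $\Omega$, they agree a.e.\ on $\Omega$, with no order continuity of $\varphi$ required.

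It remains to establish the remaining properties of $\Phi$ and $\Psi$.
\begin{enumItem}
\item Both maps are positive; for $\Psi$, positivity of $\varphi$ gives positivity of each $\varphi_i$. Since they are mutually inverse, both are Riesz isomorphisms.
\item $R(T)$-linearity is immediate from the module structure described before \cref{def:zero-extension}.
\item The isometry follows by combining the two norm inequalities from \ref{it:thm:t-strong-dual-of-direct-sums:phi-well-defined} and \ref{it:thm:t-strong-dual-of-direct-sums:psi-well-defined} with $\Psi=\Phi^{-1}$.
\end{enumItem}
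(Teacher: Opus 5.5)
Your proposal is correct and follows the paper's proof essentially step for step. It uses the blockwise identity $\norm{f}_{T,\infty}|_{\Omega_i}=\norm{f|_{\Omega_i}}_{T_i,\infty}$ for the two norm inequalities, $R(T)$-homogeneity with $\bfone_{\Omega_i}\in R(T)$ to get $\Phi\circ\Psi=\id$ without any order continuity, and a direct computation for $\Psi\circ\Phi=\id$. The isometry then follows from combining (a) and (b), and you additionally spell out the regularity (via $\varphi_i=\varphi_i^+-\varphi_i^-$) that the paper leaves as straightforward.
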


\begin{proof}
    \ref{it:thm:t-strong-dual-of-direct-sums:phi-well-defined}
    The $R(T)$-linearity and regularity of $\varphi$ is straightforward.
    For all $f \in L^\infty(T)$, we have
    \begin{align*}
        \Abs{\varphi(f)}
        & \leq \sum_{i \in I} \Abs{\overline{\varphi_i(f|_{\Omega_i})}^i}
          = \sum_{i \in I} \overline{\Abs{\varphi_i(f|_{\Omega_i})}}^i
          \leq \sum_{i \in I} \overline{\Norm{\varphi_i}_{\hat{L^\infty(T_i)}} \Norm{f|_{\Omega_i}}_{T_i,\infty}}^i \\
        & \leq \left( \sum_{i \in I} \overline{\Norm{\varphi_i}_{\hat{L^\infty(T_i)}}}^i \right) \Norm{f}_{T,\infty}
          = \Norm{(\varphi_i)_{i \in I}} \cdot \Norm{f}_{T,\infty}.
    \end{align*}
    Hence, $\varphi$ is $\norm{\cdot}_{T,\infty}$-bounded with $\norm{\varphi}_{\hat{L^\infty(T)}} \leq \norm{(\varphi_i)_{i \in I}}$. 

    \smallskip

    \ref{it:thm:t-strong-dual-of-direct-sums:psi-well-defined}
    The $R(T_i)$-linearity and regularity of $\varphi_i$ is straightforward.
    For all $f \in L^\infty(T_i)$, note that $\norm{\overline{f}^i}_{T,\infty}|_{\Omega_i} = \norm{f}_{T_i,\infty}$, thus 
    \[
        \Abs{\varphi_i(f)}
        = \Abs{\varphi\left( \overline{f}^i \right)|_{\Omega_i}}
        = \Abs{\varphi\left( \overline{f}^i \right)}|_{\Omega_i}
        \leq \left( \Norm{\varphi}_{\hat{L^\infty(T)}} \Norm{\overline{f}^i}_{T,\infty}\right)|_{\Omega_i}
        \leq \Norm{\varphi}_{\hat{L^\infty(T)}}|_{\Omega_i} \Norm{f}_{T_i,\infty}.
    \]
    It follows that $\varphi_i$ is $\norm{\cdot}_{T_i,\infty}$-bounded with $\norm{\varphi_i}_{\hat{L^\infty(T_i)}} \leq \norm{\varphi}_{\hat{L^\infty(T)}}|_{\Omega_i}$.
    In particular, we have $\overline{\norm{\varphi_i}_{\hat{L^\infty(T_i)}}}^i \leq \norm{\varphi}_{\hat{L^\infty(T)}}$, which shows that $\norm{(\varphi_i)_{i \in I}} \leq \norm{\varphi}_{\hat{L^\infty(T)}}$.

    \smallskip

    \ref{it:thm:t-strong-dual-of-direct-sums:inverse}
    It is straightforward that $\Phi$ and $\Psi$ are $R(T)$-linear and positive.
    We show that $\Phi$ and $\Psi$ are mutually inverse.
    For all $\varphi \in \hat{L^\infty(T)}$ and $f \in L^\infty(T)$, we have
    \begin{align*}
        \Phi(\Psi(\varphi))(f)
        & = \sum_{i \in I} \overline{\Psi(\varphi)_i(f|_{\Omega_i})}^i
          = \sum_{i \in I} \overline{\varphi(\overline{f|_{\Omega_i}}^i)|_{\Omega_i}}^i \\
        & = \sum_{i \in I} \varphi(f \bfone_{\Omega_i}) \bfone_{\Omega_i}
          = \sum_{i \in I} \varphi(f) \bfone_{\Omega_i}
          = \varphi(f),
    \end{align*}
    where, in the second to last step, we used the $R(T)$-homogeneity of $\varphi$.
    We conclude that $\Phi \circ \Psi = \id$.
    Moreover, for all $(\varphi_i)_{i \in I} \in \prod_{i \in I} \hat{L^\infty(T_i)}$, $j \in I$, and $f \in L^\infty(T_j)$, we have
    \[
        \Psi(\Phi((\varphi_i)_{i \in I}))_j(f)
        = \Phi((\varphi_i)_{i \in I})\left( \overline{f}^i \middle) \right|_{\Omega_j}
        = \sum_{i \in I} \left.\overline{\varphi_i\left( \overline{f}^i \middle|_{\Omega_i} \right)}^i\right|_{\Omega_j}
        = \left.\overline{\varphi_j\left( \overline{f}^i \middle|_{\Omega_j} \right)}^i\right|_{\Omega_j}
        = \varphi_j(f),
    \]
    thus $\Psi \circ \Phi = \id$.
    We conclude that $\Phi$ and $\Psi$ are mutually inverse and, thus, Riesz isomorphisms.
    Moreover, it directly follows from \ref{it:thm:t-strong-dual-of-direct-sums:phi-well-defined} and \ref{it:thm:t-strong-dual-of-direct-sums:psi-well-defined} that $\Phi$ and $\Psi$ are isometric.
\end{proof}

Recall that, by \cref{thm:dual-of-linfty}, we have the isomorphisms
\[
    \hat{L^\infty(T)} \cong \ba(T) \quad\text{and}\quad \hat{L^\infty(T_i)} \cong \ba(T_i).
\]
Together with \cref{thm:t-strong-dual-of-direct-sums}, we immediately obtain the subsequent decomposition for $\ba(T)$.

\begin{corollary} \label{thm:decomp-bat}
    \begin{enumList}
        \item \label{it:thm:decomp-bat:phi-well-defined}
        For all $(\nu_i)_{i \in I} \in \prod_{i \in I} \ba(T_i)$ the map
        \[
            \nu \colon C_{\bfone_{\Omega}} \to R(T), \quad p \mapsto \sum_{i \in I} \overline{\nu_i(p|_{\Omega_i})}^i,
        \]
        is an element of $\ba(T)$ 
        
        \item \label{it:thm:decomp-bat:psi-well-defined}
        For $\nu \in \ba(T)$ and $i \in I$, consider the map
        \[
            \nu_i \colon C_{\bfone_{\Omega_i}} \to R(T_i), \quad p \mapsto \nu\left( \overline{p}^i \middle) \right|_{\Omega_i}.
        \]
        Then $\nu_i \in \ba(T_i)$.
        
        \item \label{it:thm:decomp-bat:iso} The maps
        \begin{align*}
            \Phi \colon \prod_{i \in I} \ba(T_i) \to \ba(T), & \quad (\nu_i)_{i \in I} \mapsto \left( p \mapsto \sum_{i \in I} \overline{\nu_i(p|_{\Omega_i})}^i \right), \\
            \Psi \colon \ba(T) \to \prod_{i \in I} \ba(T_i), & \quad \nu \mapsto \left( p \mapsto \nu\left( \overline{p}^i \middle) \right|_{\Omega_i} \right)_{i \in I},
        \end{align*}
        are mutually inverse isometric $R(T)$-linear Riesz isomorphisms.
    \end{enumList}
\end{corollary}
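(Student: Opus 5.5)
The plan is to obtain \cref{thm:decomp-bat} by transporting \cref{thm:t-strong-dual-of-direct-sums} through the isomorphisms of \cref{thm:dual-of-linfty}. For the global triple and for each $T_i$, denote by $\Phi^\infty \colon \ba(T) \to \hat{L^\infty(T)}$ and $\Phi^\infty_i \colon \ba(T_i) \to \hat{L^\infty(T_i)}$ the integral maps, with inverses given by restriction to components. \cref{thm:dual-of-linfty} applies to $T$ and to each $T_i$: \cref{thm:classical-Linfty-as-abstract} shows that each $T_i$ is $T_i$-universally complete, and the extended $T$ is $T$-universally complete by construction. Write $\Phi^\oplus,\Psi^\oplus$ for the maps of \cref{thm:t-strong-dual-of-direct-sums}.

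I would then consider the composite
\[
    \prod_{i\in I}\ba(T_i) \to \prod_{i\in I}\hat{L^\infty(T_i)} \to \hat{L^\infty(T)} \to \ba(T),
\]
that is, $(\nu_i)\mapsto (\Phi^\infty_i\nu_i)_i \mapsto \Phi^\oplus((\Phi^\infty_i\nu_i)_i) \mapsto$ its restriction to $C_{\bfone_\Omega}$. On $p \in C_{\bfone_\Omega}$, this composite evaluates to $\sum_i \overline{\int p|_{\Omega_i}\,\rmd\nu_i}^i = \sum_i \overline{\nu_i(p|_{\Omega_i})}^i$, where I use \cref{thm:integral}\ref{it:thm:integral:extends-simple-integral} and the fact that $p|_{\Omega_i}$ is a component of $\bfone_{\Omega_i}$. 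This is exactly the map $\Phi$ in the corollary. In particular, part (a) follows because the composite lands in $\ba(T)$.

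Symmetrically, I would start from $\nu\in\ba(T)$, apply $\Phi^\infty$, then $\Psi^\oplus$, and then restrict each component. This gives $p\mapsto \int \overline{p}^i\,\rmd\nu\,|_{\Omega_i} = \nu(\overline{p}^i)|_{\Omega_i}$, since $\overline{p}^i\in C_{\bfone_\Omega}$. This is the map $\Psi$, which proves part (b). Both composites are built from mutually inverse $R(T)$-linear isometric Riesz isomorphisms, so $\Phi$ and $\Psi$ are mutually inverse $R(T)$-linear isometric Riesz isomorphisms, which proves part (c).

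The norm on $\prod_i \ba(T_i)$ is defined componentwise through $\overline{\norm{\cdot}}^i$. So for isometry it suffices that the product map $(\nu_i)\mapsto(\Phi^\infty_i\nu_i)$ preserves that norm componentwise; this holds because each $\Phi^\infty_i$ is isometric. The product map is also $R(T)$-linear for the $R(T)$-module structure inherited from the $R(T_i)$-modules. The main point requiring care is this bookkeeping. First, $R(T)$ acts on the $i$-th factor via $g|_{\Omega_i}$, and each $\Phi^\infty_i$ is $R(T_i)$-linear, so the product map is $R(T)$-linear. Second, the identification of components of $\bfone_\Omega$ with families of components of the $\bfone_{\Omega_i}$ (restriction and zero extension) is compatible with the formulas above.
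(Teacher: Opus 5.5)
Your proposal is correct and is exactly the paper's argument: the paper obtains \cref{thm:decomp-bat} immediately by conjugating the isomorphisms of \cref{thm:t-strong-dual-of-direct-sums} with the integral/restriction isomorphisms of \cref{thm:dual-of-linfty}, applied to $T$ and to each $T_i$. Your explicit evaluation of the composites on components (via \cref{thm:integral}\ref{it:thm:integral:extends-simple-integral}) and your componentwise check of isometry and $R(T)$-linearity for the product map simply supply bookkeeping that the paper leaves implicit.
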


Similarly, one can also first prove \cref{thm:decomp-bat} directly and then conclude \cref{thm:t-strong-dual-of-direct-sums} from \cref{thm:decomp-bat}.
The proof of \cref{thm:decomp-bat} is then almost identical to the proof \cref{thm:t-strong-dual-of-direct-sums} that we gave above.
We summarize the relationships between the spaces $\hat{L^\infty(T)}$, $\prod_{i \in I} \hat{L^\infty(T_i)}$, $\ba(T)$, and $\prod_{i \in I} \ba(T_i)$ in the following scheme:

\begin{center}
    \begin{tikzcd}
        \displaystyle\hat{L^\infty(T)}
        \ar[leftrightarrow]{r}{\sim}[swap]{\ref{thm:t-strong-dual-of-direct-sums}}
        \ar[leftrightarrow]{d}{\sim}[swap]{\ref{thm:dual-of-linfty}}
        & \displaystyle\prod_{i \in I} \hat{L^\infty(T_i)}
        \ar[leftrightarrow]{d}{\sim}[swap]{\ref{thm:dual-of-linfty}}
        \\ \displaystyle\ba(T)
        \ar[leftrightarrow]{r}{\sim}[swap]{\ref{thm:decomp-bat}}
        & \displaystyle\prod_{i \in I} \ba(T_i)
    \end{tikzcd}
\end{center}

\bibliographystyle{plain}
 \bibliography{bibliography}

@article{Azouzi-Ben-Amor-Cherif-Masmoudi-2024,
    AUTHOR = {Azouzi, Y. and Ben Amor, M. A. and Cherif, D.
              and Masmoudi, M.},
     TITLE = {Conditional supremum in {R}iesz spaces and applications},
   JOURNAL = {J. Math. Anal. Appl.},
  FJOURNAL = {Journal of Mathematical Analysis and Applications},
    VOLUME = {531},
      YEAR = {2024},
    NUMBER = {1},
     PAGES = {Paper No. 127738, 17},
      ISSN = {0022-247X,1096-0813},
   MRCLASS = {46A40 (46N10 91G80)},
  MRNUMBER = {4648479},
       DOI = {10.1016/j.jmaa.2023.127738},
       URL = {https://doi.org/10.1016/j.jmaa.2023.127738},
}

@article{Azouzi-Trabelsi-2017,
    AUTHOR = {Azouzi, Y. and Trabelsi, M.},
     TITLE = {{$L^p$}-spaces with respect to conditional expectation on
              {R}iesz spaces},
   JOURNAL = {J. Math. Anal. Appl.},
  FJOURNAL = {Journal of Mathematical Analysis and Applications},
    VOLUME = {447},
      YEAR = {2017},
    NUMBER = {2},
     PAGES = {798--816},
      ISSN = {0022-247X,1096-0813},
   MRCLASS = {46E30 (47B06)},
  MRNUMBER = {3573115},
MRREVIEWER = {Dorothee\ D.\ Haroske},
       DOI = {10.1016/j.jmaa.2016.10.013},
       URL = {https://doi.org/10.1016/j.jmaa.2016.10.013},
}

@article{Buskes-daPagter-vanRooij-1991,
    AUTHOR = {Buskes, G. and de Pagter, B. and van Rooij, A.},
     TITLE = {Functional calculus on {R}iesz spaces},
   JOURNAL = {Indag. Math. (N.S.)},
  FJOURNAL = {Koninklijke Nederlandse Akademie van Wetenschappen.
              Indagationes Mathematicae. New Series},
    VOLUME = {2},
      YEAR = {1991},
    NUMBER = {4},
     PAGES = {423--436},
      ISSN = {0019-3577,1872-6100},
   MRCLASS = {46A40 (06F25 46H05)},
  MRNUMBER = {1149692},
MRREVIEWER = {C.\ B.\ Huijsmans},
       DOI = {10.1016/0019-3577(91)90028-6},
       URL = {https://doi.org/10.1016/0019-3577(91)90028-6},
}

@article{Grobler-2014,
    AUTHOR = {Grobler, J.},
     TITLE = {Jensen's and martingale inequalities in {R}iesz spaces},
   JOURNAL = {Indag. Math. (N.S.)},
  FJOURNAL = {Koninklijke Nederlandse Akademie van Wetenschappen.
              Indagationes Mathematicae. New Series},
    VOLUME = {25},
      YEAR = {2014},
    NUMBER = {2},
     PAGES = {275--295},
      ISSN = {0019-3577,1872-6100},
   MRCLASS = {60G46 (60G42 60G44)},
  MRNUMBER = {3151817},
       DOI = {10.1016/j.indag.2013.02.003},
       URL = {https://doi.org/10.1016/j.indag.2013.02.003},
}

@article{Huijsmans-dePagter-1986,
    AUTHOR = {Huijsmans, C. B. and de Pagter, B.},
     TITLE = {On von {N}eumann regular {$f$}-algebras},
   JOURNAL = {Order},
    VOLUME = {2},
      YEAR = {1986},
    NUMBER = {4},
     PAGES = {403--408},
      ISSN = {0167-8094},
       DOI = {10.1007/BF00367427},
       URL = {https://doi.org/10.1007/BF00367427},
}

@article{Huijsmans-dePagter-1984,
    AUTHOR = {Huijsmans, C. B. and de Pagter, B.},
     TITLE = {Subalgebras and {R}iesz subspaces of an {$f$}-algebra},
   JOURNAL = {Proc. London Math. Soc. (3)},
    VOLUME = {48},
      YEAR = {1984},
    NUMBER = {1},
     PAGES = {161--174},
      ISSN = {0024-6115,1460-244X},
       DOI = {10.1112/plms/s3-48.1.161},
       URL = {https://doi.org/10.1112/plms/s3-48.1.161},
}

@article{Huijsmans-dePagter-1982,
    AUTHOR = {Huijsmans, C. B. and de Pagter, B.},
     TITLE = {Ideal theory in {$f$}-algebras},
   JOURNAL = {Trans. Amer. Math. Soc.},
  FJOURNAL = {Transactions of the American Mathematical Society},
    VOLUME = {269},
      YEAR = {1982},
    NUMBER = {1},
     PAGES = {225--245},
      ISSN = {0002-9947,1088-6850},
   MRCLASS = {06F25 (46A40 46J20 54C40)},
  MRNUMBER = {637036},
MRREVIEWER = {J.\ Kist},
       DOI = {10.2307/1998601},
       URL = {https://doi.org/10.2307/1998601},
}

@article{Luxemburg-Shep-1978,
    AUTHOR = {Luxemburg, W. A. J. and Schep, A. R.},
     TITLE = {A {R}adon-{N}ikod\'ym type theorem for positive operators and
              a dual},
   JOURNAL = {Nederl. Akad. Wetensch. Indag. Math.},
    VOLUME = {40},
      YEAR = {1978},
    NUMBER = {3},
     PAGES = {357--375},
      ISSN = {0019-3577},
}

@article{Kalauch-Kuo-Watson-2024,
    AUTHOR = {Kalauch, A. and Kuo, W. and Watson, B. A.},
     TITLE = {A {H}ahn-{J}ordan decomposition and {R}iesz-{F}rechet
              representation theorem in {R}iesz spaces},
   JOURNAL = {Quaest. Math.},
    VOLUME = {47},
      YEAR = {2024},
     PAGES = {S233--S246},
      ISSN = {1607-3606,1727-933X},
       DOI = {10.2989/16073606.2023.2287843},
       URL = {https://doi.org/10.2989/16073606.2023.2287843},
}

@article{Kalauch-Kuo-Watson-2024b,
    AUTHOR = {Kalauch, A. and Kuo, W. and Watson, B. A.},
     TITLE = {Strong completeness of a class of {$L^2(T)$}-type {R}iesz
              spaces},
   JOURNAL = {Proc. Amer. Math. Soc. Ser. B},
  FJOURNAL = {Proceedings of the American Mathematical Society. Series B},
    VOLUME = {11},
      YEAR = {2024},
     PAGES = {243--253},
      ISSN = {2330-1511},
   MRCLASS = {47B60 (46A40 46E40 47B65 60B12)},
  MRNUMBER = {4762686},
MRREVIEWER = {Karol\ Le\'snik},
       DOI = {10.1090/bproc/230},
       URL = {https://doi.org/10.1090/bproc/230},
}

@article{Kuo-Labuschagne-Watson-2005,
    AUTHOR = {Kuo, W. and Labuschagne, C. C. A. and Watson, B.
              A.},
     TITLE = {Conditional expectations on {R}iesz spaces},
   JOURNAL = {J. Math. Anal. Appl.},
    VOLUME = {303},
      YEAR = {2005},
    NUMBER = {2},
     PAGES = {509--521},
      ISSN = {0022-247X,1096-0813},
       DOI = {10.1016/j.jmaa.2004.08.050},
       URL = {https://doi.org/10.1016/j.jmaa.2004.08.050},
}

@article{Labuschagne-Watson-2010,
    AUTHOR = {Labuschagne, C. C. A. and Watson, B. A.},
     TITLE = {Discrete stochastic integration in {R}iesz spaces},
   JOURNAL = {Positivity},
  FJOURNAL = {Positivity. An International Mathematics Journal Devoted to
              Theory and Applications of Positivity},
    VOLUME = {14},
      YEAR = {2010},
    NUMBER = {4},
     PAGES = {859--875},
      ISSN = {1385-1292,1572-9281},
   MRCLASS = {60H05 (46G12 46N30 60B11 60G42 60G48)},
  MRNUMBER = {2741339},
MRREVIEWER = {Myung-Sin\ Song},
       DOI = {10.1007/s11117-010-0089-1},
       URL = {https://doi.org/10.1007/s11117-010-0089-1},
}

@inproceedings{Kuo-Labuschagne-Watson-2005-b,
    AUTHOR = {Kuo, W. and Labuschagne, C. C. A. and Watson, B. A.},
    TITLE = {Zero-one laws for {R}iesz space and fuzzy random variables},
    BOOKTITLE = {Proceedings of the IFSA2005},
    ADDRESS = {Beijing, China},
    PAGES = {393--397},
    YEAR = {2005},
}

@article{Kuo-Rogans-Watson-2017,
    AUTHOR = {Kuo, W. and Rogans, M. J. and Watson, B. A.},
     TITLE = {Mixing inequalities in {R}iesz spaces},
   JOURNAL = {J. Math. Anal. Appl.},
    VOLUME = {456},
      YEAR = {2017},
    NUMBER = {2},
     PAGES = {992--1004},
      ISSN = {0022-247X,1096-0813},
       DOI = {10.1016/j.jmaa.2017.07.035},
       URL = {https://doi.org/10.1016/j.jmaa.2017.07.035},
}

@article{Yosida-Hewitt-1952,
    AUTHOR = {Yosida, K. and Hewitt, E.},
     TITLE = {Finitely additive measures},
   JOURNAL = {Trans. Amer. Math. Soc.},
  FJOURNAL = {Transactions of the American Mathematical Society},
    VOLUME = {72},
      YEAR = {1952},
     PAGES = {46--66},
      ISSN = {0002-9947,1088-6850},
   MRCLASS = {27.2X},
  MRNUMBER = {45194},
MRREVIEWER = {M.\ Cotlar},
       DOI = {10.2307/1990654},
       URL = {https://doi.org/10.2307/1990654},
}

@book{Aliprantis-Border-2006,
    AUTHOR = {Aliprantis, C. D. and Border, K. C.},
     TITLE = {Infinite dimensional analysis},
   EDITION = {Third},
      NOTE = {A hitchhiker's guide},
 PUBLISHER = {Springer, Berlin},
      YEAR = {2006},
     PAGES = {xxii+703},
      ISBN = {978-3-540-32696-0; 3-540-32696-0},
}

@book{Aliprantis-Burkinshaw-2003,
    AUTHOR = {Aliprantis, C. D. and Burkinshaw, O.},
     TITLE = {Locally solid {R}iesz spaces with applications to economics},
    SERIES = {Mathematical Surveys and Monographs},
    VOLUME = {105},
   EDITION = {Second},
 PUBLISHER = {American Mathematical Society, Providence, RI},
      YEAR = {2003},
     PAGES = {xii+344},
      ISBN = {0-8218-3408-8},
       DOI = {10.1090/surv/105},
       URL = {https://doi.org/10.1090/surv/105},
}

@book{Aliprantis-Burkinshaw-2006,
    AUTHOR = {Aliprantis, C. D. and Burkinshaw, O.},
     TITLE = {Positive operators},
      NOTE = {Reprint of the 1985 original},
 PUBLISHER = {Springer, Dordrecht},
      YEAR = {2006},
     PAGES = {xx+376},
      ISBN = {978-1-4020-5007-7; 1-4020-5007-0},
       DOI = {10.1007/978-1-4020-5008-4},
       URL = {https://doi.org/10.1007/978-1-4020-5008-4},
}

@book{Luxemburg-Zaanen-1971,
    AUTHOR = {Luxemburg, W. A. J. and Zaanen, A. C.},
     TITLE = {Riesz spaces. {V}ol. {I}},
    SERIES = {North-Holland Mathematical Library},
 PUBLISHER = {North-Holland Publishing Co., Amsterdam-London; American
              Elsevier Publishing Co., Inc., New York},
      YEAR = {1971},
     PAGES = {xi+514},
   MRCLASS = {46A40 (06A65)},
  MRNUMBER = {511676},
MRREVIEWER = {S.\ J.\ Bernau},
}

@book{Toland-2020,
    AUTHOR = {Toland, J.},
    TITLE = {The dual of {$L_{\infty}(X,\mathcal{L},\lambda)$}, finitely
              additive measures and weak convergence---a primer},
    SERIES = {SpringerBriefs in Mathematics},
 PUBLISHER = {Springer, Cham},
      YEAR = {2020},
     PAGES = {x+99},
      ISBN = {978-3-030-34731-4; 978-3-030-34732-1},
   MRCLASS = {46-02 (28A25 46B04 46E30 46T99)},
  MRNUMBER = {4292258},
MRREVIEWER = {Rigoberto\ Vera Mendoza},
       DOI = {10.1007/978-3-030-34732-1},
       URL = {https://doi.org/10.1007/978-3-030-34732-1},
}

@book{Zaanen-1997,
    AUTHOR = {Zaanen, A. C.},
     TITLE = {Introduction to operator theory in {R}iesz spaces},
 PUBLISHER = {Springer-Verlag, Berlin},
      YEAR = {1997},
     PAGES = {xii+312},
      ISBN = {3-540-61989-5},
   MRCLASS = {47B60 (46A40 47B65)},
  MRNUMBER = {1631533},
MRREVIEWER = {Yu.\ A.\ Abramovich and C.\ D.\ Aliprantis},
       DOI = {10.1007/978-3-642-60637-3},
       URL = {https://doi.org/10.1007/978-3-642-60637-3},
}

@misc{L-func-ana-2025,
    author = {Kikianty, E. and Messerschmiedt, M. and Naude, L. and Roelands, M. and Schwanke, C. and van Amstel, W. and van der Walt, J. H. and Wortel, M.},
    title = {$\mathbb{L}$-functional analysis},
    year = {2025},
    note = {https://arxiv.org/abs/2403.10222v2},
}
  
\end{document}